\newtheorem{theo}{Theorem}[section]
\newtheorem{lem}[theo]{Lemma}
\newtheorem{rem}[theo]{Remark}
\newtheorem{defi}[theo]{Definition}
\numberwithin{equation}{section}
\newcommand{\norm}[1]{\left\vert#1\right\vert}
\def\pdt2{\partial_t^2}
\def\pdx2{\partial_x^2}
\def\RR{{\mathbb{R}}}
\def\CC{{\mathbb{C}}}
\newcommand{\bT}{{\mathbb T}}
\newcommand{\fe}{\mathrm{e}}
\def\eps{\varepsilon}
\newcommand{\norms}[1]{\left\Vert#1\right\Vert_{ \mathcal{H}^{m_0}}}
 \newcommand{\normss}[1]{\left\Vert#1\right\Vert_{ L_{t}^{\infty}(\mathcal{H}^{m_0})}}
\begin{document}

\title[Two-scale exponential integrators for CPD]{Two-scale exponential integrators with uniform accuracy  for three-dimensional charged-particle dynamics under strong   magnetic field}

\author[B. Wang]{Bin Wang}\address{\hspace*{-12pt}B.~Wang: School of Mathematics and Statistics, Xi'an Jiaotong University, 710049 Xi'an, China}
\email{wangbinmaths@xjtu.edu.cn}

\author[Z. Miao]{Zhen Miao}
\address{\hspace*{-12pt}Z. Miao: School of Mathematics and Statistics,
          Northwestern Polytechnical University,
         710072 Xi'an,  China.}
\email{mz91127@126.com}

\author[Y. L. Jiang]{Yaolin Jiang}
\address{\hspace*{-12pt}Y. L.~Jiang: School of Mathematics and Statistics, Xi'an Jiaotong University, 710049 Xi'an, China}
\email{yljiang@mail.xjtu.edu.cn}
\date{}

\dedicatory{}

\begin{abstract}
The numerical simulation of three-dimensional charged-particle dynamics (CPD) under strong magnetic field is { a basic and challenging algorithmic task in plasma physics.} In this paper, we introduce a new methodology to design two-scale exponential integrators for three-dimensional CPD whose magnetic field's strength is inversely proportional to a dimensionless and small parameter $0<\varepsilon \ll 1$. By dealing with the transformed form of three-dimensional CPD, we linearize the magnetic field and put the residual component in a new nonlinear function which is shown to be uniformly bounded. Based on this foundation and the proposed two-scale exponential integrators, a class of novel integrators is formulated and studied. { The corresponding uniform accuracy of the proposed $r$-th order integrator    is shown to be $\mathcal{O}(h^r)$, where $r=1,2,3,4$ and the constant symbolized by $\mathcal{O}$, the time stepsize $h$ and the computation cost are all independent of $\varepsilon$. Moreover, in the case of  maximal ordering strong magnetic field, improved error bound $\mathcal{O}(\varepsilon^r h^r)$ is obtained for the proposed $r$-th order integrator.} A rigorous proof of these   uniform and improved error bounds is presented, and a numerical test is performed to illustrate the { error and efficiency} behaviour of the proposed integrators.
\\
{\bf Keywords:} Charged particle dynamics, Error estimates, Strong   magnetic field, Two-scale exponential integrators, Three-dimensional system. \\
{\bf AMS Subject Classification:} 65L05, 65L70, 78A35, 78M25.
\end{abstract}

\maketitle

\section{Introduction}
Highly accurate simulation of charged-particle dynamics (CPD) plays an important role in a broad range of applications, such as plasma physics, astrophysics, nuclear physics, inertial confinement fusion and material
processing  \cite{Arnold97,Birdsall}. For example, for the probability distribution function of a large number of  electrically charged particles, its evolution is described by the Vlasov equation which is a fundamental kinetic model of collisionless plasmas \cite{CA16}.  By the Particle-In-Cell (PIC) approach (\cite{CPC,VP3,VP4,VP5,A1,VP7,SonnendruckerBook}), the  Vlasov equation can be transformed into a system on the characteristics and the numerical simulation of CPD becomes a centre stage for this equation.

Over the past few decades, there has been a longstanding interest  in designing
numerical algorithms  for CPD and see \cite{Benettin94,Boris1970,L. Brugnano2019,VP-filbet,VP8,Hairer2017-1,Hairer2018,Hairer2023} as well as their references for example.
In this paper we formulate and analyse a class of two-scale exponential integrators for  the three-dimensional equations describing the motion
of a charged particle  under a strong inhomogeneous magnetic field (\cite{Hairer2018}):
\begin{equation}\label{charged-particle 3d}
\left\{\begin{aligned}
   & \frac{\textmd{d}}{\textmd{d} t }x(t)=v(t), \qquad \qquad \qquad \qquad \quad \ \ \  x(0)=x_0\in\RR^3,\\
   & \frac{\textmd{d}}{\textmd{d} t }v (t)=v(t)\times \frac{B(x(t))}{\eps}+E(x(t)),\ \ \
      v(0)=v_0\in\RR^3, \ \  0\leq t\leq T,
\end{aligned}\right.
\end{equation}
where $x(t):[0,T  ]\to \RR^3$ and $v(t):[0,T  ]\to \RR^3$ are respectively the unknown position and velocity of a charged particle with three dimensions,   $E(x)\in \RR^3$ is a given nonuniform electric-field function,  $B(x)\in \RR^3$ is a given magnetic field and  $0<\eps \ll 1$ is a dimensionless parameter determining the strength of the  magnetic field. In this paper, we are interested in the situation of strong magnetic field ($0<\eps \ll 1$), and we assume $\norm{B(x)}=\mathcal{O}(1)$  and the initial values $\norm{x_0}=\mathcal{O}(1), \norm{v_0}=\mathcal{O}(1)$ in the Euclidean norm $\norm{\cdot}$, where the constants symbolized by $\mathcal{O}$ here and later in the whole paper are  independent of $\varepsilon$. { Although the dynamics of charged particles  \eqref{charged-particle 3d} constitutes a relatively simple system, composed of just six ordinary differential equations (ODEs), it is widely recognized that this system plays a crucial role with broad applications in plasma physics, astrophysics, and magnetic fusion research. The numerical integration of such ODEs is  a basic and challenging algorithmic task  for particle methods in plasma physics.
For example, with   the  PIC  approach to the  Vlasov equation,  a large number of  equations are obtained and each equation is a system of  a charged particle with the scheme \eqref{charged-particle 3d}.
Therefore, while this paper focuses exclusively on the straightforward system \eqref{charged-particle 3d}, it is important to note that the methodologies proposed here are readily applicable to the Vlasov equation in conjunction with the PIC technique.}

Concerning the numerical integrators for the CPD \eqref{charged-particle 3d},  the strong magnetic field $ B(x)/\eps$ and the nonlinearities  of $B(x)$ and $E(x)$ pose formidable challenges in the numerical simulation. For those earlier methods \cite{VP2,Hairer2017-2,He2017,Ostermann15,Tao2016,Webb2014,Zhang2016} that do not focus on the case of strong magnetic field, their accuracy usually will lose when $\eps$ becomes small. Thus one  needs to adopt very small time stepsize in order to obtain accurate simulations, which is time consuming and practically prohibitive. To overcome this numerical obstacle and accelerate the calculation when the magnetic field is  strong,
various methods have been proposed and analysed in recent years.  A class of asymptotic preserving  schemes \cite{VP4,VP5,A1,A2,Chacon} has been derived for the Vlasov equation and the numerical methods   consistent with the limit equation when $\eps\rightarrow 0$ and $\eps\rightarrow \mathcal{O}(1)$. Nevertheless, as stated in \cite{VP1},   the
error behaviour of asymptotic-preserving methods is   deteriorated for certain
values of $\eps$ and  as a consequence  { the methods with uniform   accuracy  (with respect to   the parameter $\eps$)  are highly desirable.}

 Combined with  the PIC approach,  many efforts on uniformly accurate (UA) methods have been done for the Vlasov equation in a strong  magnetic field.
In \cite{VP1,CPC,VP3},  methods of uniform first and second orders were derived  for the  two-dimensional system. For three-dimensional case whose external magnetic field has constant norm, a UA scheme of   order two was developed in \cite{Zhao} and was extended to   the strong magnetic field with varying direction by using some properties of Vlasov equation. { On the other hand, concerning the UA methods   solving CPD directly,}
 two filtered Boris algorithms were formulated  in \cite{lubich19} and they were shown to have second-order uniform accuracy for  CPD under  the maximal ordering scaling  \cite{scaling1,scaling2}, i.e., $B/\eps=B(\eps x)/\eps$.
  Large-stepsize integrators with second-order uniform error bounds were studied in \cite{Hairer2022} for a more special magnetic field $B/\eps=B_0/\eps+B_1(x)$.
 Three splitting methods   have been proved  in \cite{WZ} to keep first-order uniform error bound for CPD in a general strong  magnetic field but there is a restriction on the stepsize required in the methods. In a recent paper \cite{WJ23}, for  two-dimensional CPD, a class of algorithms with improved accuracy
was developed and for  three-dimensional system under  the maximal ordering scaling, the  proposed methods  still show uniform (not improved) error bounds.
{\color{cyan}Based on these facts,  the following summaries can be drawn.
\begin{itemize}
  \item Most the existing UA algorithms of CPD  are primarily focused on the two-dimensional system or three-dimensional case
under a special strong  magnetic field (maximal ordering scaling or constant norm).
  \item The existing UA methods for three-dimensional  CPD with maximal ordering   magnetic field achieve up to fourth-order uniform accuracy.
  \item For   three-dimensional CPD with a general  strong  magnetic field, only first-order  methods with uniform error bounds have been investigated  under a restriction on the stepsize. It seems that no UA method has been developed for three-dimensional CPD which do not require special assumptions on the magnetic field.
\end{itemize}

Although there are fruitful  particle methods for practical use, a pity is that UA algorithms  for solving  three-dimensional CPD  under a general strong inhomogeneous magnetic field are deficient.  In contrast to two-dimensional CPD or three-dimensional systems   subjected to   specific magnetic fields, the three-dimensional CPD under a general strong magnetic field presents greater challenges and difficulties in the formulation of numerical algorithms.}
\begin{itemize}
  \item The three-dimensional CPD does not exhibit all the characteristics of its two-dimensional case, which brings new challenges in the development of numerical methods (see \cite{VP1,WJ23}).
  \item The strong  function  $B(x)/\eps$ will make the convergence of traditional methods depend on $1/\eps$ and its nonlinearity will prevent some processing technique of numerical  methods. For example, the well-known trigonometric/exponential integrators (\cite{ADD5,ADD4,ADD2,Hochbruck1999,Ostermann06,Ostermann,ADD3,ADD6,ADD1,WZ23})   cannot be   applicable to the three-dimensional CPD \eqref{charged-particle 3d}.
  \item The general form of $B(x)$ does not have special properties possessed by maximal ordering scaling $B(\eps x)$ and this rejects the feasibility of those algorithms and analysis presented in \cite{Hairer2022,lubich19,WJ23}.
\end{itemize}

To overcome these challenges and make the uniform error estimates  go smoothly,
this paper proposes a new methodology  to the formulation of  integrators for the three-dimensional CPD \eqref{charged-particle 3d} under a general strong inhomogeneous magnetic field.
{ Compared with the existing schemes, the main novelties and contributions are as follows.
\begin{itemize}
  \item The proposed integrators successfully achieve uniform accuracy when solving   three-dimensional CPD under a general strong magnetic field. Four uniformly accurate   integrators of orders up to  four are obtained in this paper, which rewards the integrators  with very  high precision.

  \item One more interesting finding is that for  CPD  with a maximal ordering  strong magnetic field ($B/\eps=B(\eps x)/\eps$), improved uniform error bounds are shown for the proposed methods.
  More specifically, an $r$-th order integrator  offers the improved error bound $\mathcal{O}(\varepsilon^r h^r)$ with the stepsize $h$ and $r=1,2,3,4.$

  \item To achieve these objectives, we deal with the CPD \eqref{charged-particle 3d} carefully by linearizing  the magnetic field, removing the residual component into a new nonlinear  function and  employing two-scale exponential integrators. We shall present and analyse the detailed properties of transformed systems. One vital property of the new nonlinearity can be deduced. Drawing upon this insight  and the stiff order conditions of exponential integrators, uniform and improved error bounds are derived.
\end{itemize}}

The rest of this paper is organized  as follows. In Section \ref{sec:2}, we present the formulation of  novel integrators.
 { Uniform error bounds of the proposed integrators are provided}   and rigorously proved in  Section \ref{sec:3}.
In section \ref{sec:4}, a numerical test is carried out to show the error behaviour of the proposed methods { and the efficiency in comparison with some well-known methods.}
The last section includes the   conclusions  of this paper.

 \section{Formulation of the integrator}\label{sec:2}

We have noticed that the part $ B(x)/\eps$  appeared  in the equation \eqref{charged-particle 3d} brings two difficulties in the formulation of integrators. The first is that $B(x)$ depends on the solution $x$  and classical   trigonometric/exponential integrators are not applicable. The second is that $1/\eps$ has   a very negative influence on the accuracy of numerical methods.
To obtain novel methods with uniform error bounds, we need to  deal with the CPD \eqref{charged-particle 3d} carefully.
In the formulation of numerical schemes, we
first make some reconstructions of the system to get a new equation with some nice properties. Then
we consider  numerical integration for the new system and  uniform error bounds can be deduced by using those derived properties and the analysis of exponential integrators.

\subsection{Reconstructions of the system}\label{sub21}
In this subsection, we  make some   reconstructions of the  CPD \eqref{charged-particle 3d} in order to get an equivalent system with some important properties.

{
Considering a scaling to the variable $v(t)$:
 $ w (t ):=\eps  v(t)$,
then the CPD \eqref{charged-particle 3d}  is equivalent to the following  system
\begin{equation}\label{H model problem2}
\left\{\begin{aligned}
   & \frac{\textmd{d}}{\textmd{d} t }  x (t )=\frac{w (t ) }{\eps }, \quad\qquad \qquad \qquad \qquad \ \ \ \ \    x (0)=x_0,\\
   & \frac{\textmd{d}}{\textmd{d} t }w (t )=w (t )\times \frac{B( x (t ))}{\eps }+\eps E( x (t )),\ \ \
      w (0)=\eps  v_0,\ \ 0\leq t \leq T.
\end{aligned}\right.
\end{equation}}Compared with the original CPD \eqref{charged-particle 3d}, this system has a small  nonlinearity $\eps E(x (t ))$ in the  interval $[0, T]$. Such cases of other systems with small nonlinear functions have also been well studied in  \cite{Bao23,Bao21} and some numerical schemes were proved to  offer improved uniform error bounds. { These existing publications motivate the first transformation \eqref{H model problem2}.}

In what follows, we extract a constant magnetic field from $B(x )$ and get a  linearized system.  To this end, introduce the notation
$B(x ):=(b_1( x ),b_2(  x ),b_3(  x ))^\intercal\in \RR^3$ and $$\widehat B (x ):= \begin{pmatrix}
                     0 & b_3(x ) & -b_2(x ) \\
                     -b_3(x ) & 0 & b_1(x ) \\
                     b_2(x ) & -b_1(x ) & 0 \\
                  \end{pmatrix}.
$$
%
%
%
%
%
%
%
%
%
{ Extracting a constant matrix $\widehat  B_0:= \widehat B(x_0)$ from $\widehat B(x)$, we formulate \eqref{H model problem2} as} {
\begin{equation}\label{charged-particle2 ccc}
  \left\{\begin{aligned}
   & \frac{\textmd{d}}{\textmd{d} t }  x (t )= w (t )/\eps,\ \ \ \quad   \qquad \qquad \qquad   \ \ \ \    x (0)=x_0,\\
   &\frac{\textmd{d}}{\textmd{d} t }w (t )=   \widehat  B_0  w (t )/\eps  +  F( x (t ),w (t )),\ \ \
    w (0)=\eps v_0,\ \ 0\leq t \leq T,
\end{aligned}\right.
\end{equation} where the new nonlinearity is defined by $F( x(t ) ,w(t ) ):=\frac{\widehat  B( x (t ))-\widehat  B_0}{\eps } w (t )  +\eps E( x (t )).$ In order to make the expression be more concise, we introduce two simple notations $$s_0(t \widehat B_0)=\exp(t \widehat B_0),\ \ \ s_1(t \widehat B_0)=(\exp(t \widehat B_0)-I_3)/\widehat B_0.$$
Defining two filtered variables:
$$q(t):=x(t)+s_1(-t \widehat  B_0/\eps)w(t),\ \ \ p(t):=s_0(-t \widehat  B_0/\eps)w(t),$$
 the   linearized system (\ref{charged-particle2 ccc}) is equivalent to a system of differential equations for $q(t)$ and $p(t)$:
\begin{equation}\label{H2 model problem}\left\{\begin{aligned}\frac{\textmd{d}}{\textmd{d} t }  q (t )=&s_1(-t \widehat B_0/\eps)F\big( q(t)+s_1(t \widehat B_0/\eps)p(t),s_0(t \widehat B_0/\eps)p(t)\big), \ \ \ q (0)=x_0,\\
\frac{\textmd{d}}{\textmd{d} t } p(t )=&s_0(-t \widehat B_0/\eps)F\big( q(t)+s_1(t \widehat B_0/\eps)p(t),s_0(t \widehat B_0/\eps)p(t)\big), \ \ \
p(0)=\eps v_0,\ \ 0\leq t \leq T.\end{aligned}\right.
\end{equation}}Looking carefully at the nonlinearity, it is seen that $t /\eps $ now only appears in the functions $s_0(\pm t  \widehat  B_0/\eps )$ and $s_1(\pm t  \widehat  B_0/\eps )$.
With the help of $\widehat B_0$, these functions can be computed by  (\cite{lubich19})
\begin{equation*}\begin{aligned}&s_0(\pm t  \widehat  B_0/\eps )=I_3\pm\frac{\sin(  bt /\eps )}{b}\widehat  B_0+\frac{1-\cos( bt /\eps )}{b^2}\widehat  B_0^2,\\
&s_1(\pm t  \widehat  B_0/\eps )=\pm\frac{\sin(  bt /\eps )}{b}I_3+\frac{1-\cos( bt /\eps )}{b^2}\widehat  B_0,\end{aligned}\end{equation*}
where $b:=\norm{B( x_0)}$. From this form, it follows that
these two functions are both periodic in  $ t /\eps $ and for simplicity, we assume that the period is of  $ 2\pi$ in this paper. For other period, it can be changed into $ 2\pi$-period by some
 transformation in time $t$.

 With the above preparations and by isolating the fast time variable  $ t /\eps $ as another variable $\tau$,  we introduce another two functions
$Q(t ,\tau), P(t ,\tau)\in\RR^3$ which are required to  satisfy the following partial differential equations (PDEs)
\begin{equation}\label{2scalenew}\left\{\begin{split}
  &\partial_{t }Q(t ,\tau)+\frac{\partial_\tau Q(t ,\tau)}{\eps  }=s_1(-\tau \widehat B_0)F\big( Q(t ,\tau)+s_1(\tau \widehat B_0)P(t ,\tau),s_0(\tau \widehat B_0)P(t ,\tau)\big),\\
    &\partial_{t }P(t ,\tau)+\frac{\partial_\tau P(t ,\tau)}{\eps  }=s_0(-\tau \widehat B_0)F\big( Q(t ,\tau)+s_1(\tau \widehat B_0)P(t ,\tau),s_0(\tau \widehat B_0)P(t ,\tau)\big),
    \end{split}\right.
\end{equation}
where  $t \in[0,T]$ and $\tau\in[-\pi,\pi]$.
 The initial data $Q(0,\tau), P(0,\tau)$ is free except that $Q(0,0)=x_0,   P(0,0)=\eps v_0.$
 Combining  \eqref{2scalenew} with \eqref{H2 model problem}, it is deduced that when $\tau=t /\eps $, these two systems are identical, i.e.,
 $$Q(t ,t /\eps )= q(t ),\quad P(t ,t /\eps )=p(t ),\ \ \ t \in[0,T].$$
 Letting
\begin{equation}\label{ftau} f_\tau(U):=\left(
                      \begin{array}{c}
                     s_1(-\tau \widehat B_0)F\big( Q+s_1(\tau \widehat B_0)P,s_0(\tau \widehat B_0)P\big) \\
                      s_0(-\tau \widehat B_0)F\big( Q+s_1(\tau \widehat B_0)P,s_0(\tau \widehat B_0)P\big)\\
                      \end{array}
                    \right)\    \mathrm{with}\    U:=(Q^\intercal,P^\intercal)^\intercal,\end{equation}
 the above system \eqref{2scalenew} can be expressed
in a compact form \begin{equation}\label{2scale compact}\partial_{t }U(t ,\tau)+\frac{1}{\eps  }\partial_\tau U(t ,\tau)=f_\tau(U(t ,\tau)),\ \  t \in[0,T],\ \ \tau\in\bT:=[-\pi,\pi],\end{equation}
{ with the requirement $U(0,0)=(x_0^\intercal,\eps v_0^\intercal)^\intercal.$}

It is remarked that the new variable $\tau$ of \eqref{2scale compact} offers a free degree for designing the initial { data} $U(0,\tau)$.
With the arguments of  \cite{autoUA}, a so called   ``fourth-order  initial data"  will be derived  in the rest part of this subsection.
Before going further, we need the following notations
 $$L:=\partial_\tau,\ \ \Pi \vartheta(\tau):=\frac{1}{2\pi}\int_{-\pi}^{\pi}\vartheta(\tau)d\tau,\  \ A:=L ^{-1} (I-\Pi),$$
 where $\vartheta(\tau)$ is a periodic function   on $\bT$. We note here that $\Pi,\ A$ are bounded on $C^0(\mathbb{T};H^{\sigma})$ for any { Sobolev} space  $H^{\sigma}$ with $\sigma\geq 0$.
 The new initial data is constructed based on  the Chapman-Enskog expansion   (\cite{autoUA}) of $U(t ,\tau)$:
   $$U(t ,\tau)=\underline{U}(t )+\kappa(t ,\tau)\ \ \textmd{with}\ \ \underline{U}(t )=\Pi U(t ,\tau),\ \ \Pi\kappa(t ,\tau)=0.$$
Keeping the scheme \eqref{2scale compact} in mind, these two composes satisfy the following differential equations
   \begin{equation*} \begin{aligned}
&\partial_{t }\underline{U}(t )=\Pi f_\tau\big(\underline{U}(t )+\kappa(t ,\tau)\big),\ \
\partial_{t } \kappa(t ,\tau)+\frac{1}{\eps }
 \partial_\tau \kappa(t ,\tau)=(I-\Pi)f_\tau\big(\underline{U}(t )+\kappa(t ,\tau)\big).
\end{aligned}
\end{equation*}
The second one can be expressed as
  \begin{equation}\label{kapa de}
\kappa(t ,\tau)=\eps  A f_\tau\big(\underline{U}(t )+\kappa(t ,\tau)\big)-\eps  L^{-1}\big(\partial_{t }\kappa(t ,\tau)\big).
\end{equation}
 We seek an expansion in powers of $\eps $ for the composer $\kappa$  (\cite{autoUA}):
 \begin{equation}\label{kapa}
\kappa(t ,\tau)=\eps  \kappa_1(\tau,\underline{U}(t ))+\eps^{2 } \kappa_2(\tau,\underline{U}(t ))+\eps^{3 }  \kappa_3(\tau,\underline{U}(t ))+\mathcal{O}(\eps^{4 } ),
\end{equation}
where $\mathcal{O}$ denotes the term uniformly bounded with the   Euclidean  norm.
 Inserting \eqref{kapa} into \eqref{kapa de}, taking the Taylor series of $f_\tau$ at $\underline{U}$ and comparing the coefficients   of $\eps^{j } $ with $j=1,2,3$, we can get ($(t )$ is omitted for conciseness)
  \begin{equation}\label{kapa re2}\begin{aligned}
\kappa_1(\tau,\underline{U})=&Af_\tau(\underline{U}),\ \ \kappa_2(\tau,\underline{U})=A\partial_Uf_\tau(\underline{U})Af_\tau(\underline{U})-A^2\partial_U f_\tau(\underline{U})\Pi f_\tau(\underline{U}),\\
\kappa_3(\tau,\underline{U})=&A\partial_Uf_\tau(\underline{U}) A\partial_Uf_\tau(\underline{U})Af_\tau(\underline{U})-A\partial_Uf_\tau(\underline{U})A^2\partial_U f_\tau(\underline{U})\Pi f_\tau(\underline{U})\\&
+\frac{1}{2}  A \partial^2_Uf_\tau(\underline{U})\big( Af_\tau(\underline{U}), Af_\tau(\underline{U})\big)- A^2\partial_U^2f_\tau(\underline{U})\big(\Pi f_\tau(\underline{U}),Af_\tau(\underline{U})\big)\\
&-
A^2\partial_Uf_\tau(\underline{U})A\partial_U f_\tau(\underline{U})\Pi f_\tau(\underline{U})+A^3\partial_U^2f_\tau(\underline{U})\big(\Pi f_\tau(\underline{U}),\Pi f_\tau(\underline{U})\big)\\
&+A^3\partial_U f_\tau(\underline{U})\Pi \partial_U f_\tau(\underline{U})\Pi f_\tau(\underline{U})-A^2\partial_U f_\tau(\underline{U})\Pi \partial_U f_\tau(\underline{U})A f_\tau(\underline{U}).
\end{aligned}
\end{equation}
Based on these results, the ``fourth-order  initial data" is defined by
 \begin{equation}\label{inv2}\begin{aligned}U_0(\tau):=&U(0 ,\tau)=\underline{U}(0)+\kappa(0 ,\tau)\\
 =&\underline{U}(0)+\eps \kappa_1(\tau,\underline{U}(0) )+\eps^{2 } \kappa_2(\tau,\underline{U}(0) )+\eps^{3 } \kappa_3(\tau,\underline{U}(0))+\mathcal{O}(\eps^{4 } ),\end{aligned}\end{equation}
where $\underline{U}(0)$ is chosen such that $U_0(0)=(x_0^\intercal,\eps v_0^\intercal)^\intercal.$
{
The proper $\underline{U}(0)$ can be obtained through an iteration:
\begin{equation}\label{boundqj}\underline{U} ^{[0]}(0):=U_0(0),\ \ \underline{U}^{[j]}(0)=U_0(0)-\sum_{l=1}^j\eps^{ j-l+1} \kappa_l\big(0,\underline{U}^{[j-l]}(0)\big),\quad j=1,2,3.\end{equation}
Dropping  the $\mathcal{O}(\eps^{4 } )$-term, the ``fourth-order  initial data" is finally given by
 \begin{equation}\label{inv}U_0(\tau):=\underline{U}^{[3]}(0)+\eps \kappa_1\big(\tau,\underline{U}^{[2]}(0) \big)+\eps^{2 } \kappa_2\big(\tau,\underline{U}^{[1]} (0)\big)+\eps^{3 } \kappa_3\big(\tau,\underline{U}^{[0]}(0)\big).\end{equation}}

  \begin{rem}
  It is noted that this initial data is used for  fourth-order time integrators. For lower-order methods, the initial data can be weakened into a simple form. For example, third-order  method only needs $U_0(\tau)=\underline{U}^{[2]}(0)+\eps \kappa_1\big(\tau,\underline{U}^{[1]}(0) \big)+\eps^{2 } \kappa_2\big(\tau,\underline{U}^{[0]}(0) \big)$ in practical computations.
  \end{rem}

\subsection{Full-discretization}
In this subsection, we shall present the full-discretization for the transformed PDEs \eqref{2scale compact} with the new designed initial value \eqref{inv}. Since the equation is   periodic in $\tau\in\bT:=[-\pi,\pi]$, Fourier pseudospectral method is a very suitable choice because of its high accuracy (\cite{Shen}).   After applying the Fourier pseudospectral method in $\tau$, a system of ordinary differential equations (ODEs) is derived and then exponential integrators are considered.

To use the Fourier pseudospectral method in the variable $\tau\in\bT$, we consider  the trigonometric
polynomials
$$U^{\mathcal{M}}(t ,\tau)=\big(U_1^{\mathcal{M}}(t ,\tau),
U_2^{\mathcal{M}}(t ,\tau),\ldots,U_6^{\mathcal{M}}(t ,\tau)\big)^\intercal\in \RR^6$$
with
\begin{equation*}
\begin{array}[c]{ll}
 U_j^{\mathcal{M}}(t ,\tau)=\sum\limits_{k\in \mathcal{M}}
 \widehat{U}_{k,j}(t )\mathrm{e}^{\mathrm{i} k \tau },\ \  (t ,\tau)\in[0,T]\times
[-\pi,\pi],
\end{array}
\end{equation*}
where  $j=1,2,\ldots,6$, $k\in \mathcal{M}: =\{-N_\tau/2,-N_\tau/2+1,\ldots,N_\tau/2-1\}$  with an even positive integer $N_\tau$ and  $ (\widehat{U}_{k,j}(t ))_{k\in \mathcal{M}}$ are referred to the discrete Fourier coefficients of $ U_j^{\mathcal{M}}$, i.e., $$\widehat{U}_{k,j}(t )=\frac{1}{N_\tau}\sum_{l=-N_\tau/2}^{N_\tau/2-1}U_j^{\mathcal{M}}(t , 2\pi l/N_\tau )\exp(-\mathrm{i}k 2\pi l/N_\tau).$$
Based on the scheme of \eqref{2scale compact}, we require the trigonometric
polynomials to satisfy
\begin{equation}\label{F-PDE}\partial_{t }U^{\mathcal{M}}(t ,\tau)+\frac{1}{\eps  }\partial_\tau U^{\mathcal{M}}(t ,\tau)=f_\tau\big(U^{\mathcal{M}}(t ,\tau)\big),\ \  \ (t ,\tau)\in[0,T]\times
[-\pi,\pi].\end{equation}
Collecting all the coefficients $\widehat{U}_{k,j}$
in a $D:=6 N_{\tau}$ dimensional
coefficient vector\footnote{In this paper,    for all the
vectors and diagonal matrices with the same dimension as $\widehat{\mathbf{U}}$, we use the same expression as \eqref{vec form} to denote their components.}
 {  \begin{equation} \label{vec form} \begin{aligned}\widehat{\mathbf{U}}=\Big(&\widehat{U}_{-\frac{N_{\tau}}{2},1},\widehat{U}_{-\frac{N_{\tau}}{2}+1,1},\ldots,
 \widehat{U}_{\frac{N_{\tau}}{2}-1,1},\widehat{U}_{-\frac{N_{\tau}}{2},2},\widehat{U}_{-\frac{N_{\tau}}{2}+1,2},\ldots,
 \widehat{U}_{\frac{N_{\tau}}{2}-1,2},\\
 &\ldots,\widehat{U}_{-\frac{N_{\tau}}{2},6},
 \widehat{U}_{-\frac{N_{\tau}}{2}+1,6},\ldots,
 \widehat{U}_{\frac{N_{\tau}}{2}-1,6}\Big)^\intercal\in \CC^D,\end{aligned}
\end{equation}}we get a system of  ODEs
\begin{equation}\label{2scale Fourier-f} \begin{aligned}
&\frac{\textmd{d}}{\textmd{d}t }\widehat{\mathbf{U}}(t )=\mathrm{i}\Omega\widehat{\mathbf{U}}(t )
+ \mathbf{F}(\widehat{\mathbf{U}}(t )),\ \ \ t \in[0,T],
 \end{aligned}
\end{equation}
where  $\Omega=\textmd{diag}(\Omega_1,\Omega_2,\ldots,\Omega_6)$ with
$\Omega_1=\Omega_2=\cdots=\Omega_6:=\frac{1}{\eps }\textmd{diag}\big(\frac{N_{\tau}}{2}-1,
  \frac{N_{\tau}}{2}-2,\ldots,-\frac{N_{\tau}}{2}\big),$ and $$ \mathbf{F}(\widehat{\mathbf{U}}(t )):=\left(
                      \begin{array}{c}
                     \mathcal{F} \mathbf{S}^{-} F\big(\mathcal{F}^{-1}\widehat{\mathbf{U}}^{\textmd{part1}}(t )+ \mathbf{S}^{+}\mathcal{F}^{-1}\widehat{\mathbf{U}}^{\textmd{part2}}(t ), \mathbf{E}^{+}\mathcal{F}^{-1}\widehat{\mathbf{U}}^{\textmd{part2}}(t )\big)\\
                     \mathcal{F} \mathbf{E}^{-}F\big(\mathcal{F}^{-1}\widehat{\mathbf{U}}^{\textmd{part1}}(t )+ \mathbf{S}^{+}\mathcal{F}^{-1}\widehat{\mathbf{U}}^{\textmd{part2}}(t ), \mathbf{E}^{+}\mathcal{F}^{-1}\widehat{\mathbf{U}}^{\textmd{part2}}(t )\big)\\
                      \end{array}
                    \right)$$ with $ \widehat{\mathbf{U}}(t ):=\Big((\underbrace{\widehat{\mathbf{U}}^{\textmd{part1}}(t )}_{\frac{D}{2}\  \textmd{dimensions}})^\intercal,(\underbrace{\widehat{\mathbf{U}}^{\textmd{part2}}(t )}_{\frac{D}{2}\  \textmd{dimensions}})^\intercal\Big)^\intercal.$
 Here the notations $\mathbf{E}^{\pm}$ and $\mathbf{S}^{\pm}$ are defined by
 \begin{equation*}
\begin{array}[c]{ll}%
\mathbf{E}^{\pm}=&I_3\otimes I_{N_{\tau}} \pm \Big(\frac{I_3}{b}\otimes  \textmd{diag}\big(\sin(  2b l\pi /N_\tau )\big)_{l\in \mathcal{M}}\Big) (\widehat  B_0 \otimes I_{N_{\tau}}) \\&
+\Big(\frac{I_3}{b^2}\otimes\textmd{diag}\big(1-\cos( 2b l\pi /N_\tau )\big)_{l\in \mathcal{M}}\Big) ( \widehat  B_0^2\otimes I_{N_{\tau}}),\\
\mathbf{S}^{\pm}=&  \pm \Big(\frac{I_3}{b}\otimes  \textmd{diag}\big(\sin( 2b l\pi /N_\tau )\big)_{l\in \mathcal{M}}\Big) (I_{3} \otimes I_{N_{\tau}}) \\&
+\Big(\frac{I_3}{b^2}\otimes\textmd{diag}\big(1-\cos( 2b l\pi /N_\tau )\big)_{l\in \mathcal{M}}\Big) ( \widehat  B_0 \otimes I_{N_{\tau}}),
\end{array}\end{equation*} and
$\mathcal{F} \widehat{\mathbf{U}}^{\textmd{part1}}$ (or $\mathcal{F}^{-1}\widehat{\mathbf{U}}^{\textmd{part1}}$) denotes the discrete Fourier
transform (or the inverse discrete Fourier
transform) acting on each $N_{\tau}$-dimensional
coefficient vector $ \widehat{\mathbf{U}}^{\textmd{part1}}_{:,j}$ of $
\widehat{\mathbf{U}}^{\textmd{part1}}$, where $$\widehat{\mathbf{U}}^{\textmd{part1}}_{:,j}=\Big(
\widehat{\mathbf{U}}^{\textmd{part1}}_{-\frac{N_{\tau}}{2},j},\widehat{\mathbf{U}}^{\textmd{part1}}
_{-\frac{N_{\tau}}{2}+1,j},\ldots,
\widehat{\mathbf{U}}^{\textmd{part1}}_{\frac{N_{\tau}}{2}-1,j}\Big)^\intercal\quad \textmd{for}\quad  j=1,2,3.$$
The initial condition of \eqref{2scale Fourier-f}  is stated as
 \begin{equation}\label{inv full}\widehat{\mathbf{U}}^0:=\Big(\big(\mathcal{F} (U_0( 2l\pi/N_\tau)_{l\in \mathcal{M}})^{\textmd{part1}}\big)^\intercal,\big(\mathcal{F} (U_0( 2l\pi/N_\tau)_{l\in \mathcal{M}})^{\textmd{part2}}\big)^\intercal\Big)^\intercal.\end{equation}

Based on the above preparations, we are now in a position to present the   full-discretization of the CPD \eqref{charged-particle 3d}.
Before that, we sum up the different systems introduced above and present them  in Table \ref{tab1}.
\begin{table}[t!]
\renewcommand{\arraystretch}{1.9}
\centering
\begin{tabular}
[c]{|c|c|c|c|}\hline    & Equation & Initial data & Time length\\\hline
\eqref{charged-particle 3d}: original CPD&$\dot{x}=v,  \dot{v}=v\times \frac{B(x)}{\eps}+E(x)$ &$x_0,v_0$ &$[0,T]$\\\hline
\eqref{charged-particle2 ccc}:  linearized CPD&$\dot{x} =\frac{w }{\eps },  \dot{w} =  \frac{\widehat  B_0}{\eps } w   +F( x ,w )$ &$x_0,\eps v_0$ &$[0,T]$\\\hline
\eqref{2scale compact}: two-scale system&$\partial_{t }U(t,\tau)+\frac{1}{\eps  }\partial_\tau U(t,\tau)=f_\tau(U(t,\tau))$ &\eqref{inv}: $U_0(\tau)$ &$[0,T]$\\\hline
\eqref{2scale Fourier-f}: semi-discrete system&$\frac{\textmd{d}}{\textmd{d}t }\widehat{\mathbf{U}}(t )=\mathrm{i}\Omega\widehat{\mathbf{U}}(t )
+ \mathbf{F}(\widehat{\mathbf{U}}(t ))$ &\eqref{inv full}: $\widehat{\mathbf{U}}^0$ &$[0,T]$\\\hline
\end{tabular}
\caption{Different systems considered in Section \ref{sec:2}. }%
\label{tab1}%
\end{table}

\begin{defi}\label{dIUA-PE-F}(\textbf{Fully discrete scheme})
Choose a time step size $h>0$ and a positive even number $N_{\tau}>0$.  The full-discretization of the CPD \eqref{charged-particle 3d} is defined as follows.
   \begin{itemize}
\item Step 1:
   After those  reconstructions of the  CPD \eqref{charged-particle 3d} and the application of Fourier pseudospectral method in $\tau$, we get a semi-discrete equation \eqref{2scale Fourier-f} with the initial data $\widehat{\mathbf{U}}^0$ \eqref{inv full}.

\item Step 2:  For solving this obtained system, $s$-stage explicit exponential integrators \cite{Ostermann} are considered,  that is for $M=\mathrm{i}\Omega$  \begin{equation}\label{ei-sc}
\begin{array}[c]{ll}%
 \widehat{\mathbf{U}}^{ni}&=\exp( c_{i} hM)   \widehat{\mathbf{U}}^{n}+h\textstyle\sum\limits_{j=1}^{i-1}\bar{a}_{ij}(hM)\mathbf{F}( \widehat{\mathbf{U}}^{nj}),\ \
i=1,2,\ldots,s,\\
 \widehat{\mathbf{U}}^{n+1}&=\exp( hM) \widehat{\mathbf{U}}^{n}+h\textstyle\sum\limits_{j=1}^{s}\bar{b}_{j}(hM)\mathbf{F}( \widehat{\mathbf{U}}^{nj}),\ \
 \quad n=0,1,\ldots,T/h-1.
\end{array}\end{equation}
Here  $0<h<1$ is the time stepsize and $s\geq1$ is the stage of the   integrator with the coefficients $c_i$, $\bar{a}_{ij}(h M)$ and
$\bar{b}_{j}(h M)$. This produces the approximation of  \eqref{2scale Fourier-f} $$\widehat{\mathbf{U}}^{n}\approx \widehat{\mathbf{U}}(nh),$$ where $n=1,2,\ldots,T/h$.

\item Step 3:  The numerical solution of the two-scale system \eqref{2scale compact} is given by
  \begin{equation}\label{nsfor2scale}\begin{array}[c]{ll}
  &\mathbf{U}_j^{ni}=\sum\limits_{k\in \mathcal{M}} \widehat{\mathbf{U}}^{ni}_{k,j}
\exp(\mathrm{i}k (n+c_i)h/\eps  )\approx U_j(nh+c_ih, (n+c_i)h/\eps),\ \ i=1,2,\ldots,s,\\
&\mathbf{U}_j^{n}=\sum\limits_{k\in \mathcal{M}} \widehat{\mathbf{U}}^{n}_{k,j}
\exp(\mathrm{i}k nh/\eps  )\approx U_j(nh, nh/\eps),\ \ j=1,2,\ldots,6,\ \ n=1,2,\ldots,T/h.\end{array}\end{equation}

 \item Step 4: Letting $$ q ^n:=(\mathbf{U}_1^{n},\mathbf{U}_2^{n},\mathbf{U}_3^{n})^\intercal,\quad p^n:=(\mathbf{U}_4^{n},\mathbf{U}_5^{n},\mathbf{U}_6^{n})^\intercal,$$
 the numerical approximation of the  linearized CPD \eqref{charged-particle2 ccc} is defined as
 $$ x ^n:=q^n+s_1(nh\widehat  B_0/\eps)p^n\approx    x (nh),\  w^n:=s_0(nh\widehat  B_0/\eps)p^n\approx w(nh)$$
for $n=1,2,\ldots, T/h.$

 \item Step 5: The full-discretization $x^{n} $ and $v^{n} $  of the original CPD  \eqref{charged-particle 3d}  is formulated as  \begin{equation*} \begin{aligned}
&  x ^n\approx x(nh ),\ \ \ v^{n}:= w^n/\eps\approx v(nh )\ \ \
\textmd{for} \ \ n=1,2,\ldots,T/h.
 \end{aligned}
\end{equation*}

   \end{itemize}
\end{defi}

{
\begin{rem}
Compared with traditional methods, the scheme of our integrators given in this paper is more complicated. For example, the two-scale method   enlarges the dimension of the original system and this usually adds some computation cost. Fortunately,
      Fourier pseudospectral method has very high accuracy and thus  in many applications the practical value of $N_\tau$ doesn't need to be large. For instance, as shown in the literature \cite{Zhao,autoUA,VP3} for the limit case of the charged-particle dynamics and the H\'{e}non-Heiles model, $N_{\tau}=16$ is enough to get the machine accuracy for discretizing the $\tau$-direction.
      Moreover, the use of Fast Fourier Transform (FFT) techniques within our integrators can enhance their efficiency. Based on these considerations, we anticipate that the performance of our integrators remains commendable, even when compared with the methods applied directly to CPD.  This efficiency will be numerically demonstrated
      by a numerical experiment  given in Section \ref{sec:4}.
\end{rem}}

\subsection{Practical integrators of orders up to  four}\label{sec:prac}
As the last part of this section, for the explicit  exponential scheme \eqref{ei-sc}, we present four practical  integrators whose accuracy ranges from first-order to fourth-order precision.  Here we use the notations $\varphi_{\rho}(z):=\int_0^1
\theta^{\rho-1}\fe^{(1-\theta)z}/(\rho-1)!d\theta$ for $\rho=1,2,3$ (\cite{Ostermann}) and $\varphi_{ij}:=\varphi_i(c_jh M)$.

\textbf{First-order integrator}. We consider the explicit exponential Euler method (\cite{Ostermann}) as the first practical integrator which can be
expressed as $$  \widehat{\mathbf{U}}^{n+1}=\exp( hM) \widehat{\mathbf{U}}^{n}+h\varphi_1(hM)\mathbf{F}( \widehat{\mathbf{U}}^{n}).$$
In this paper, we shall { denote this first-order method incorporating an exponential scheme as MO1-E}.

\textbf{Second-order integrator}.
 Then we   consider a second-order explicit exponential integrator  which is given by
\begin{equation*}
\begin{array}[c]{ll}%
 \widehat{\mathbf{U}}^{n1}&=\exp( hM/2) \widehat{\mathbf{U}}^{n}+h/2 \varphi_1(hM/2)\mathbf{F}( \widehat{\mathbf{U}}^{n}),\\
 \widehat{\mathbf{U}}^{n+1}&=\exp( hM) \widehat{\mathbf{U}}^{n}+h\varphi_1(hM)\mathbf{F}( \widehat{\mathbf{U}}^{n1}).
\end{array}\end{equation*}
This explicit  second-order method can be expressed as the form \eqref{ei-sc} with $s=2$ and we   denote it by  MO2-E.

\textbf{Third-order integrator}. For the third-order explicit integrator, we consider three-stage scheme \eqref{ei-sc}, i.e., $s=3$.
Following \cite{Ostermann},   choose the coefficients as
$c_1=0,   c_2=1/3 ,   c_3=2/3$ and
  \begin{equation*}
\begin{aligned} &\bar{a}_{21} =c_2\varphi_{1,2},\
\bar{a}_{31} =\frac{2}{3} \varphi_{1,3}- \frac{4}{9c_2}\varphi_{2,3},\ \bar{a}_{32} =\frac{4}{9c_2}\varphi_{2,3},\  \bar{b}_{1} = \varphi_1   -\frac{3}{2} \varphi_2 ,  \ \
 \bar{b}_{2} =0,\ \ \bar{b}_{3} =\frac{3}{2} \varphi_2.
\end{aligned}
\end{equation*}
This third-order method is referred as MO3-E.

\textbf{Fourth-order integrator}.
As the last example, we pay attention to the fourth-order explicit integrator. Choosing  $s=5$ and
 \begin{equation*}
\begin{array}
[c]{ll}%
&c_1=0,\ \ \qquad\qquad\quad\ c_2=c_3=c_5=\frac{1}{2}, \qquad\  c_4=1,\\
&\bar{a}_{2,1}=\frac{1}{2}\varphi_{1,2},\qquad  \ \quad \bar{a}_{3,1}=\frac{1}{2}\varphi_{1,3}-\varphi_{2,3},
\quad \ \bar{a}_{3,2}=\varphi_{2,3},\\
&\bar{a}_{4,1}= \varphi_{1,4}-2\varphi_{2,4},\ \ \bar{a}_{4,2}= \bar{a}_{4,3}=\varphi_{2,4},\qquad
\bar{a}_{5,1}= \frac{1}{2}\varphi_{1,5}-2\bar{a}_{5,2}-\bar{a}_{5,4},\\ &\bar{a}_{5,2}=  \frac{1}{2}\varphi_{2,5}-\varphi_{3,4}+\frac{1}{2}\varphi_{2,4}-\frac{1}{2}\varphi_{3,5},\qquad
\ \ \ \ \bar{a}_{5,3}=  \bar{a}_{5,2},\ \ \bar{a}_{5,4}=  \frac{1}{2}\varphi_{2,5}-\bar{a}_{5,2},\\
&\bar{b}_{1}=\varphi_{1}-3\varphi_{2}+4\varphi_{3},\ \ \bar{b}_{2}=\bar{b}_{3}=0,\qquad\ \ \ \ \ \ \bar{b}_{4}=-\varphi_{2}+4\varphi_{3},\ \ \bar{b}_{5}=4\varphi_{2}-8\varphi_{3},\\
\end{array}
\end{equation*}
an explicit  exponential method of order four is determined (\cite{Ostermann06}) and we refer to it as MO4-E.

To end this section, we   note that higher-order explicit exponential integrators can be constructed by deriving their stiff order conditions. This issue will not be discussed further  in this paper.
%

 \section{Error estimates}\label{sec:3}
 In this section, we deduce the error bound on  the fully discrete   scheme given in Definition \ref{dIUA-PE-F}.
For simplicity of notations,  we shall denote $C>0$   a generic constant independent of the time step $h$ or $\eps$ or $n$.
We   use the   norm $\norm{\cdot}$ of a finite dimensional vector or matrix
to denote the standard Euclidian norm in this paper.
\subsection{Main result}
\begin{theo} \label{UA thm2} (\textbf{Uniform error bounds})
For the CPD \eqref{charged-particle 3d} and its transformed system \eqref{2scale compact}, we make the following assumptions.
\begin{itemize}
  \item Assumption I. It is assumed that
 $B(x)$ and $E(x)$ of  \eqref{charged-particle 3d}  are  smooth   with
derivatives  bounded independently of $\eps$ on bounded domains, and we   assume boundedness independently
of $\eps$ for  the  initial position $x_0$ and velocity $v_0$.

  \item  Assumption II. For a smooth periodic function $\vartheta(\tau)$ on $\bT:=[-\pi,\pi]$,
denote the Sobolve space $\mathcal{H}^{m}(\bT)=\{\vartheta(\tau)\in\mathcal{H}^{m}:\partial^l_{\tau}\vartheta(-\pi)=\partial^l_{\tau}\vartheta(\pi),l=0,1,\ldots,m \}$.
The  initial value $U_0(\tau)$ \eqref{inv}  of the two-scale system  \eqref{2scale compact} is assumed to be uniformly bounded w.r.t. $\eps$ in the Sobolev space  $\mathcal{H}^{m_0}(\bT)$ with some $m_0\geq0.$
\end{itemize}
Under these assumptions,   for the numerical solution $x^n$ and $v^n$ from Definition \ref{dIUA-PE-F} as well as   the $r$-th order scheme MO$r$-E provided in Section \ref{sec:prac}, there exist constants $h_0, C>0$ depending on $T$, such that for $0<h<h_0$ and $n=1,2,\ldots,T/h$:
\begin{equation}\label{err res1}
\begin{aligned}
 &\norm{ x(nh ) -x^n}+\eps\norm{ v(nh ) -v^n}\leq C \big( h^r+ (2\pi/N_{\tau})^{m_0}\big),\quad r=1,2,3,4,
\end{aligned}
\end{equation}
where the constant $C$ is independent of the time step $h$ or $\eps$ or $n$.
{ If the magnetic field has maximal ordering scaling (see \cite{scaling1,lubich19,scaling2}), i.e., $B/\eps=B(\eps x)/\eps,$ the global errors are improved to be
\begin{equation}\label{err res2}
\begin{aligned}
 &\norm{ x(nh ) -x^n}+\eps\norm{ v(nh ) -v^n}\leq C \big(  \eps^rh^r+ (2\pi/N_{\tau})^{m_0}\big),\quad r=1,2,3,4.\\
\end{aligned}
\end{equation}}
\end{theo}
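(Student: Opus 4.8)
The plan is to split the total error into a $\tau$-direction (spatial) part and a $t$-direction (temporal) part, and to control each uniformly in $\eps$. First I would reduce $\norm{x(nh)-x^n}+\eps\norm{v(nh)-v^n}$ to an error on the semi-discrete variable $\widehat{\mathbf U}$. Since $w=\eps v$, Step~4--5 of Definition~\ref{dIUA-PE-F} give $\eps\norm{v(nh)-v^n}=\norm{w(nh)-w^n}=\norm{s_0(nh\widehat B_0/\eps)(p(nh)-p^n)}$ and $x^n=q^n+s_1(nh\widehat B_0/\eps)p^n$; as the filter matrices $s_0,s_1$ are bounded by constants depending only on $b=\norm{B(x_0)}$, the left-hand side is controlled by $\norm{q(nh)-q^n}+\norm{p(nh)-p^n}$, hence, using the exact identities $q(t)=Q(t,t/\eps)$, $p(t)=P(t,t/\eps)$ and the evaluation formula \eqref{nsfor2scale}, by $\norm{U(nh,nh/\eps)-\mathbf U^n}$. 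Because every transformation in Table~\ref{tab1} is exact, the only approximations are the Fourier pseudospectral truncation in $\tau$ and the exponential integrator in $t$, so I would bound
$$\norm{U(nh,nh/\eps)-\mathbf U^n}\le \underbrace{\norm{U-U^{\mathcal M}}}_{\tau\text{-error}}+\underbrace{\norm{U^{\mathcal M}-\mathbf U^n}}_{t\text{-error}}.$$
Using Assumption~II together with propagation of the uniform $\mathcal H^{m_0}(\bT)$-regularity of $U(t,\cdot)$ along the flow \eqref{2scale compact}, the first term is $O\big((2\pi/N_\tau)^{m_0}\big)$ by standard spectral estimates, uniformly in $\eps$ and $t$; this produces the $(2\pi/N_\tau)^{m_0}$ contribution in both \eqref{err res1} and \eqref{err res2}.

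The core is the temporal error of the exponential integrator applied to \eqref{2scale Fourier-f}, $\partial_t\widehat{\mathbf U}=M\widehat{\mathbf U}+\mathbf F(\widehat{\mathbf U})$ with $M=\mathrm i\Omega$. The decisive facts I would establish and then feed into the stiff-order-condition framework of \cite{Ostermann,Ostermann06} are: (i) $M$ is skew-Hermitian, so $\norm{\exp(tM)}=1$ and every $\varphi_j(hM)$ is bounded independently of $h$ and $\eps$ --- this is what keeps the large factor $1/\eps$ hidden in $\Omega\sim1/\eps$ out of the error constant; and (ii) the reconstructed nonlinearity is uniformly bounded. The key lemma here is that, although $\tfrac{\widehat B(x)-\widehat B_0}{\eps}$ looks singular, it always acts on $w=\eps v=O(\eps)$, so $F(x,w)=\tfrac{\widehat B(x)-\widehat B_0}{\eps}w+\eps E(x)$ is uniformly bounded; correspondingly $f_\tau$ and $\mathbf F$ are uniformly bounded, and the exact two-scale solution $U$ with the well-prepared data \eqref{inv} has uniformly bounded $\mathcal H^{m_0}$-norm and $t$-derivatives thanks to the Chapman--Enskog structure \eqref{kapa}--\eqref{kapa re2}. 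Combining (i)--(ii), the MO$r$-E scheme satisfies the stiff order conditions up to order $r$, and the Hochbruck--Ostermann convergence estimate yields $\norm{U^{\mathcal M}-\mathbf U^n}=O(h^r)$ uniformly in $\eps$, giving \eqref{err res1}.

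For the improved bound \eqref{err res2} I would revisit the size of the nonlinearity under the maximal ordering scaling $B/\eps=B(\eps x)/\eps$. The crucial observation is that the field is now sampled at $\eps x$, so along the dynamics its argument varies only by $O(\eps)$; consequently $\tfrac{\widehat B(\eps x)-\widehat B_0}{\eps}$ \emph{and all of its $x$-derivatives} are bounded uniformly in $\eps$, and one may factor $F=\eps\widetilde F$ (equivalently $f_\tau=\eps\widetilde f_\tau$, $\mathbf F=\eps\widetilde{\mathbf F}$) with $\widetilde F$ smooth and all derivatives $O(1)$. Inserting this into the B-series / elementary-differential representation of the local truncation error of the order-$r$ scheme, each elementary differential of the relevant order carries one power of $\eps$ per factor of $\widetilde f_\tau$, so the local error is $O(\eps^{r}h^{r+1})$ and the accumulated error $O(\eps^{r}h^{r})$; combined with the unchanged spectral term this gives \eqref{err res2}.

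I expect two steps to be the main obstacles. The first, underlying \eqref{err res1}, is that uniform boundedness of $\mathbf F$ holds only \emph{in value}: its derivative in the $P$/velocity direction is formally $\tfrac{\widehat B(x)-\widehat B_0}{\eps}=O(1/\eps)$. Pushing the convergence proof through without any stepsize restriction (in contrast to \cite{WZ}) requires exploiting that the offending variable $P=s_0(-t\widehat B_0/\eps)w$ is itself $O(\eps)$, so that its errors are $O(\eps)$ and the apparent $1/\eps$ amplification is compensated, together with the exact handling of the oscillation by $\exp(tM)$; making this quantitative and uniform is the delicate part. The second obstacle, underlying \eqref{err res2}, is the bookkeeping that the elementary-differential expansion genuinely accumulates the full $\eps^{r}$ and not merely a single power, which relies on $\widetilde f_\tau$ having \emph{all} derivatives uniformly bounded --- a property special to the $\eps x$ argument and unavailable for a general inhomogeneous field, which is precisely why no improvement is claimed in the general case.
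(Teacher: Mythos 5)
Your treatment of the uniform bound \eqref{err res1} is, in outline, the same route the paper takes: reduce through the exact transformations of Table \ref{tab1} to the two-scale system, split off the spectral error in $\tau$ (the paper formalizes this with a transitional integrator in Lemma \ref{lem3}), use that $F$ is uniformly bounded because $w=\eps v=\mathcal{O}(\eps)$ (Lemma \ref{lem1}), that the well-prepared data \eqref{inv} gives uniformly bounded $t$-derivatives of $U$ (Lemma \ref{lem2}), and feed these into the stiff-order-condition framework of Hochbruck--Ostermann with the skew-Hermitian $M$ (Lemma \ref{lem5} and the final recursion). You also correctly flag the delicate point that $\partial_U\mathbf{F}$ is only $\mathcal{O}(1/\eps)$ in the $P$-direction and must be paired with $\mathcal{O}(\eps)$ quantities; this part of your plan is consistent with the paper.

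The argument you give for the improved bound \eqref{err res2}, however, has a genuine gap. Your mechanism is a factorization $F=\eps\widetilde F$ with $\widetilde F$ smooth and \emph{all} derivatives $\mathcal{O}(1)$, followed by counting one power of $\eps$ per factor of $\widetilde f_\tau$ in an elementary-differential expansion of the local error. Both halves fail. First, the factorization claim is false: since $\dot x=w/\eps=\mathcal{O}(1)$, the solution visits a region with $\norm{x-x_0}=\mathcal{O}(1)$, on which $\partial_w F=\big(\widehat B(\eps x)-\widehat B_0\big)/\eps$ is only $\mathcal{O}(1)$ (not $\mathcal{O}(\eps)$), so $\partial_w\widetilde F=\mathcal{O}(1/\eps)$; the offending velocity-direction derivative that you yourself identified as the obstacle in the uniform case does not disappear under maximal ordering, and maximal ordering only tames the $x$-derivatives. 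Second, even granting such a factorization, counting factors of $\widetilde f_\tau$ cannot by itself produce $\eps^{r}$: the elementary differentials of $\partial_t\widehat{\mathbf{U}}=M\widehat{\mathbf{U}}+\mathbf{F}(\widehat{\mathbf{U}})$ contain factors of $M=\mathcal{O}(1/\eps)$, e.g. $\frac{d}{dt}\mathbf{F}(\widehat{\mathbf{U}})=\partial_U\mathbf{F}\cdot\big(M\widehat{\mathbf{U}}+\mathbf{F}(\widehat{\mathbf{U}})\big)$, so the gain in $\eps$ is not a structural property of the nonlinearity but a property of the exact, well-prepared solution. This is precisely what the paper proves instead: Lemma \ref{lem2}(iii) establishes $\normss{\partial_t^kU}\leq C\eps^k$, $k=1,\dots,4$, by showing the initial values of the successive variational equations are $\mathcal{O}(\eps^k)$ (using the Chapman--Enskog construction) and applying Gronwall; Lemma \ref{lem5} then converts these solution-level bounds into $\frac{d^{\rho-1}}{dt^{\rho-1}}G(t_n)=\mathcal{O}_{\mathcal{H}^{m_0}}(\eps^{\rho})$ and improved local defects, which the final recursion (using $\psi_r(0)=0$ and Lemma 4.8 of \cite{Ostermann06}) accumulates into $\mathcal{O}(\eps^rh^r)$. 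Without replacing your factorization-and-counting step by derivative estimates of this kind on the exact two-scale solution, the $\eps^{r}$ gain in \eqref{err res2} is unjustified.
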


 Before stepping into the proof,  an important remark is presented { below}.
\begin{rem}
It is noted that the error  $(2\pi/N_{\tau})^{m_0}$ comes from the accuracy of Fourier pseudospectral method which can be improved quickly   as $N_{\tau}$ increases (\cite{Shen}). Thus this error is not the focus of our attention and this paper is mainly devoted to the error bound on the numerical methods in time direction. { Meanwhile, we remark that the above result \eqref{err res1} gives
  the uniform convergence of  an $r$-th order   scheme in $x$ since the accuracy is $\mathcal{O}(h^r)$, where the constant symbolized by $\mathcal{O}$ and the  stepsize $h$ are both independent of $\eps$.
This means that the procedure proposed in this paper provides four uniformly accurate  time integrators of up to order four for  the CPD \eqref{charged-particle 3d}. Moreover, in the case of maximal ordering magnetic field, the proposed integrators offer significantly  improved accuracy \eqref{err res2}, where the order in $\eps$ raises along with the order of the scheme.}
\end{rem}

%
%

 To prove this theorem, some properties of the reconstructed  systems are needed and we derive them in the following lemmas \ref{lem1} and \ref{lem2}. After obtaining them, the error bound can be deduced. The outline of the proof is given as follows.
  \begin{itemize}
 \item Lemma \ref{lem1}. For the  linearized CPD \eqref{charged-particle2 ccc}, we  estimate the bound of its solution and based on which, a bound is derived for the new nonlinearity
 $F(x,w)$.

 \item  Lemma \ref{lem2}.  Based on the results of  Lemma \ref{lem1},
 the solution of    two-scale system \eqref{2scale compact} and its derivatives w.r.t $t $ are estimated. It will be shown that
 the nonlinearity and the derivatives of the solution have  uniform bounds for general strong magnetic field and improved  bounds for maximal ordering scaling.

\item Lemma \ref{lem3}. To study the convergence of  fully discrete scheme, we first research the error brought from the Fourier pseudospectral method in the new variable $\tau$, which can be done by introducing a transitional integrator.


\item Lemma \ref{lem5}. We use the results of Lemmas \ref{lem1}-\ref{lem3} and stiff order conditions of exponential integrators  to derive  uniform bounds on the local errors for the transitional integrator.

\item Proof of  Theorem \ref{UA thm2}. With the above preparations and  by the standard arguments of convergence, global errors are shown.
\end{itemize}

 \subsection{Proof of the main result}
It is noted that  since the CPD \eqref{charged-particle 3d} and the  linearized CPD \eqref{charged-particle2 ccc} are both in  a finite dimensional vector space,  Euclidean norm $\norm{\cdot}$ is considered for them.
For the  linearized CPD \eqref{charged-particle2 ccc},  the following   estimate of its solution is obtained.
\begin{lem}\label{lem1}
Under the conditions of Assumption I proposed in Theorem \ref{UA thm2}, the solution of  the  linearized CPD \eqref{charged-particle2 ccc} satisfies
 \begin{equation}\label{lem1 bound1}
  \norm{ x (t )} \leq C,\ \ \  \norm{w (t )} \leq C \eps \ \ \textmd{for\ all}\  \ \ t \in[0,T].
\end{equation}
{ Moreover, we have  the following estimate of the nonlinearity
 \begin{equation}\label{lem1 bound2}\norm{F( x (t ),w(t ))}\leq C   \ \ \textmd{for\ all}\  \ \ t \in[0,T].\end{equation}
 For the   maximal ordering scaling case $B(\eps x)/\eps$, this bound is  improved to be
 \begin{equation}\label{lem1 bound3}\norm{F( x (t ),w(t ))}\leq C \eps  \ \ \textmd{for\ all}\  \ \ t \in[0,T].\end{equation}}
\end{lem}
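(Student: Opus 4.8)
The plan is to establish the three bounds in sequence, starting with the solution estimate \eqref{lem1 bound1} and then leveraging it to control the nonlinearity. First I would recall that the linearized CPD \eqref{charged-particle2 ccc} is obtained from the scaled system \eqref{H model problem2} simply by adding and subtracting $\widehat B_0 w/\eps$, so it is genuinely equivalent to the original CPD \eqref{charged-particle 3d} under the substitution $w=\eps v$. The cleanest route to the position and velocity bounds is therefore to work directly with the \emph{original} CPD \eqref{charged-particle 3d} and use the well-known energy structure: the magnetic force $v\times B(x)/\eps$ is orthogonal to $v$, hence does no work, and the only source of energy growth is the bounded electric field $E(x)$. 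Concretely, I would compute
\[
\frac{\d}{\d t}\tfrac12\norm{v(t)}^2 = v(t)\cdot E(x(t)),
\]
so that $\norm{v(t)}$ grows at most linearly in $t$ as long as $x(t)$ stays in a bounded region where $E$ is bounded (Assumption I). Coupling this with $\dot x = v$ and a Gronwall/continuation argument on $[0,T]$ gives $\norm{x(t)}\le C$ and $\norm{v(t)}\le C$ uniformly in $\eps$. Translating back through $w=\eps v$ yields $\norm{x(t)}\le C$ and $\norm{w(t)}\le C\eps$, which is exactly \eqref{lem1 bound1}.

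Next, for the general nonlinearity bound \eqref{lem1 bound2}, I would substitute the two estimates from \eqref{lem1 bound1} into the definition
\[
F(x(t),w(t)) = \frac{\widehat B(x(t))-\widehat B_0}{\eps}\,w(t) + \eps E(x(t)).
\]
The second term is $\mathcal O(\eps)$ since $E$ is bounded on the region where $x(t)$ lives. The first term is the delicate one: naively it carries a $1/\eps$, but $\norm{w(t)}\le C\eps$ supplies a compensating factor, giving $\norm{(\widehat B(x(t))-\widehat B_0)w(t)/\eps}\le C\,\norm{\widehat B(x(t))-\widehat B_0}$. Since $\widehat B$ is built linearly from the smooth components of $B$ (Assumption I), $\widehat B$ is smooth and hence bounded on the bounded set containing $\{x(t)\}$, yielding $\norm{F}\le C$. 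Here it is enough to use boundedness of $\widehat B$, not smallness of $\widehat B(x)-\widehat B_0$.

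Finally, for the maximal ordering improvement \eqref{lem1 bound3} I would exploit the scaling $B(x)/\eps = B(\eps x)/\eps$, which forces $\widehat B(x)=\widehat B(\eps x)$ in the argument, so that $\widehat B_0=\widehat B(\eps x_0)$ and
\[
\widehat B(x(t))-\widehat B_0 = \widehat B\big(\eps x(t)\big)-\widehat B\big(\eps x_0\big).
\]
By the mean value theorem (or a first-order Taylor estimate) together with the $\eps$-uniform bound on $D\widehat B$ and on $\norm{x(t)}+\norm{x_0}$, this difference is $\mathcal O(\eps)$. Combined with the already-present factor $\norm{w(t)}/\eps\le C$, the first term of $F$ becomes $\mathcal O(\eps)$, matching the $\mathcal O(\eps)$ from the electric term, and \eqref{lem1 bound3} follows. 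The main obstacle I anticipate is the continuation/bootstrap argument in the first step: the a priori boundedness of $x(t)$ is what legitimizes using the bounds on $E$, $B$, and $D\widehat B$ on a \emph{fixed} bounded domain, and one must verify that the solution cannot escape that domain on $[0,T]$ uniformly in $\eps$ before the energy estimate is in force; this is handled by a standard maximal-interval argument, but it is the place where the $\eps$-uniformity must be argued carefully rather than assumed.
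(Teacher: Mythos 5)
Your proof is correct, and its second and third parts coincide with the paper's: substitute $\norm{w(t)}\leq C\eps$ into the definition of $F$, using only boundedness of $\widehat B$ on a bounded set for \eqref{lem1 bound2}, and the mean value theorem applied to $\widehat B(\eps x(t))-\widehat B(\eps x_0)$ for \eqref{lem1 bound3}. Where you genuinely differ is the first step. The paper never leaves the linearized system \eqref{charged-particle2 ccc}: it introduces the rotated variable $\hat w(t)=\exp(-t\widehat B_0/\eps)w(t)$ and kills the $\mathcal{O}(1/\eps)$ term by a matrix identity, namely that the scalar $\hat w^\intercal\exp(-t\widehat B_0/\eps)\frac{\widehat B(x)-\widehat B_0}{\eps}\exp(t\widehat B_0/\eps)\hat w$ vanishes because conjugating the skew-symmetric matrix $\widehat B(x)-\widehat B_0$ by the orthogonal matrix $\exp(t\widehat B_0/\eps)$ yields again a skew-symmetric matrix; Gronwall is then applied to the coupled norms $(\norm{x},\norm{\hat w})$ as in \eqref{NEE}--\eqref{a-priori dbound}. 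You instead exploit the equivalence with the original CPD \eqref{charged-particle 3d} and use the elementary identity $v\cdot(v\times B(x)/\eps)=0$, then transfer the bounds through $w=\eps v$. Both arguments express the same fact -- the magnetic force does no work -- and end with the same Gronwall setup, and yours is the more elementary since it avoids matrix-exponential algebra altogether. What the paper's rotated-frame formulation buys is reusability: in the proof of Lemma \ref{lem2} exactly the same conjugation-plus-skew-symmetry computation is repeated for the transformed two-scale system \eqref{2scalenew3}, where there is no underlying ``original CPD'' to fall back on, so the cancellation must be performed at the matrix level anyway. Finally, your caution about the continuation/bootstrap step is warranted, but note it applies equally to the paper's proof, whose Lipschitz bound for $E$ in \eqref{a-priori dbound} likewise presupposes that the trajectory remains in a fixed bounded set; both proofs resolve this by the same standard maximal-interval argument.
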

\begin{proof}
For the  linearized CPD \eqref{charged-particle2 ccc}, with the new notation $\hat{w}(t ):=\exp(- t \widehat  B_0/\eps) w  (t )$, it reads
\begin{equation}\label{H1 model problem}\begin{aligned}\frac{\textmd{d}}{\textmd{d} t }  x (t )=&\exp( t \widehat  B_0/\eps)\hat{w}(t )/\eps, \qquad  \qquad x (0)=x_0,\\
\frac{\textmd{d}}{\textmd{d} t } \hat{w}(t )=&\exp(- t \widehat  B_0/\eps)F\big( x (t ),\exp( t \widehat  B_0/\eps)\hat{w}(t )\big)\\
=&\exp(- t \widehat  B_0/\eps)\frac{\widehat  B( x (t ))-\widehat  B_0}{\eps } \exp( t \widehat  B_0/\eps)\hat{w}(t )\\
& +\eps \exp(- t \widehat  B_0/\eps)E( x (t )),\ \ \ \hat{w}(0)=\eps v_0.\end{aligned}
\end{equation}
 We first take the inner product on both sides of \eqref{H1 model problem} with
$ x (t ), \hat{w}(t )$ and then use Cauchy-Schwarz inequality to get
\begin{equation}\label{NEE}
\begin{aligned}
&\frac{\textmd{d}}{\textmd{d} t } \norm{ x (t )}^2\leq 2\norm{ x (t )}\norm{\hat{w}(t )}/\eps,\\
& \frac{\textmd{d}}{\textmd{d} t } \norm{\hat{w}(t )}^2\leq 2\norm{\hat{w}^{\intercal}(t )\exp(- t \widehat  B_0/\eps)\frac{\widehat  B( x (t ))-\widehat  B_0}{\eps } \exp( t \widehat  B_0/\eps)\hat{w}(t )}+2 \eps   \norm{E( x (t ))} \norm{\hat{w}(t )}.\end{aligned}\end{equation}
Observing  the skew-symmetric
matrix $\widehat  B_0$, it is arrived  that  (see \cite{lubich19})
$$\exp(-t \widehat  B_0)=I_3-\frac{\sin(t  b)}{b}\widehat  B_0+\frac{1-\cos(t  b)}{b^2}\widehat  B_0^2$$
with $b=\norm{B( x_0)}$. Therefore, it is clear that
$$\big(\exp(-t \widehat  B_0)\big)^{\intercal}=I_3+\frac{\sin(t  b)}{b}\widehat  B_0+\frac{1-\cos(t  b)}{b^2}\widehat  B_0^2=\exp(-t \widehat  B_0),$$
which yields that
\begin{equation*}
\begin{aligned}&\hat{w}^{\intercal}(t )\exp(- t \widehat  B_0/\eps)\frac{\widehat  B( x (t ))-\widehat  B_0}{\eps } \exp( t \widehat  B_0/\eps)\hat{w}(t )\\
=&\Big(\hat{w}^{\intercal}(t )\exp(- t \widehat  B_0/\eps)\frac{\widehat  B( x (t ))-\widehat  B_0}{\eps } \exp( t \widehat  B_0/\eps)\hat{w}(t )\Big)^{\intercal}\\
=&-\hat{w}^{\intercal}(t )\exp(- t \widehat  B_0/\eps)\frac{\widehat  B( x (t ))-\widehat  B_0}{\eps } \exp( t \widehat  B_0/\eps)\hat{w}(t )=0.\end{aligned}\end{equation*}
Thence, the second inequality in \eqref{NEE}  becomes
\begin{equation}\label{a-priori dbound}
\begin{aligned}
\frac{\textmd{d}}{\textmd{d} t } \norm{\hat{w}(t )} &\leq  { \eps  \norm{E( x (t ))}\leq\eps   \norm{E( x (0))}+  \eps   \norm{E( x (t ))-E( x (0))} }\\
& \leq   \eps   \norm{E( x (0))}  +  \eps   C\norm{  x (t )} +  \eps   C\norm{ x (0)}.\end{aligned}\end{equation}
Combining this with the first inequality in \eqref{NEE} and considering $x (0)=x_0, \hat{w}(0)=\eps v_0$ yields
 \begin{equation}\label{a-priori bound 1}
  \norm{ x (t )} \leq C,\ \ \  \norm{\hat{w}(t )} \leq C \eps \ \ \textmd{for\ all}\  \ \ t \in[0,T],
\end{equation}
by Gronwall’s inequality. This completes the proof of
   \eqref{lem1 bound1}. 

{
By using the derived bounds in \eqref{lem1 bound1} and the scheme of $F(x,w)$, the
 estimate \eqref{lem1 bound2} is shown as
$$\norm{F( x ,w )}\leq  \norm{\widehat  B( x )-\widehat  B_0}  \norm{w }/\eps  +\eps \norm{E( x )}\leq C +C \eps \leq C.$$
For the case that $B=B(\eps x)$, it is  obtained that
$$\norm{  B(\eps x  )-  B_0} \leq  \norm{\eps \nabla   B\big(\eps (x +\theta (x_0-x )) \big) }  \norm{x -x_0 }   \leq C \eps \  \ \textmd{with\ some}\ \ \theta\in[0,1],$$
which yields \eqref{lem1 bound3} immediately.}
The proof is complete.
\end{proof}

The following lemma is devoted to the two-scale system \eqref{2scalenew}  and derives the estimates of
its solution as well as derivatives w.r.t. $t $.  In this part, we introduce  $L_{t }^{\infty}:=L^{\infty}([0,T])$
and $\mathcal{H}^{m_0}:=\mathcal{H}^{m_0}([-\pi,\pi])$ to denote the functional spaces of $t $ and $\tau$ variables, respectively.

\begin{lem}\label{lem2}  
Under the conditions  given in Theorem \ref{UA thm2}, we have the following results.

\textbf{(i)}
 The two-scale system \eqref{2scale compact}  with the initial value  $U_0(\tau)$ \eqref{inv}  has a unique solution   $U(t ,\tau):=(Q^\intercal(t ,\tau),P^\intercal(t ,\tau))^\intercal$ in  $\mathcal{H}^{m_0}.$
 Moreover, the solution  satisfies
\begin{equation}\label{UBOUND1}\begin{aligned} &\normss{Q(t ,\tau)}\leq C,\ \ \normss{P(t ,\tau)}\leq C \eps,\\
& \normss{F\big( Q(t ,\tau)+s_1(\tau B_0)P(t ,\tau),s_0(\tau B_0)P(t ,\tau)\big) }\leq C,\   (t ,\tau)\in[0,T]\times
[-\pi,\pi]. \end{aligned}\end{equation}

\textbf{(ii)} The solution $U(t ,\tau)$ has up to four  derivatives w.r.t. $t $ which are functions in $\mathcal{H}^{m_0}$ and are  estimated as
\begin{equation}\label{UBOUND2}\begin{aligned}
&\normss{\partial_{t }^{k}U(t ,\tau)}\leq C,\ \  (t ,\tau)\in[0,T]\times
[-\pi,\pi], ,\ \ k=1,2,3,4.\end{aligned}\end{equation}

\textbf{(iii)}  { If the magnetic field has the maximal ordering form $B(\eps x)/\eps$, the following improved bounds are  obtained
\begin{equation}\label{IUBOUND}\begin{aligned}
& \normss{F\big( Q(t ,\tau)+s_1(\tau B_0)P(t ,\tau),s_0(\tau B_0)P(t ,\tau)\big) }\leq C \eps,\\
& \normss{\partial_{t }^{k}U(t ,\tau)}\leq C \eps^{k},\ \   (t ,\tau)\in[0,T]\times
[-\pi,\pi],\ \ k=1,2,3,4. \end{aligned}\end{equation}}
\end{lem}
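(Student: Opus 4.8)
The plan is to establish Lemma \ref{lem2} by transferring the finite-dimensional bounds of Lemma \ref{lem1} to the two-scale formulation, exploiting the characteristic identity $U(t,t/\eps)=(q^\intercal(t),p^\intercal(t))^\intercal$ together with the Chapman--Enskog structure of the initial data \eqref{inv}. First I would address part \textbf{(i)}. Local existence and uniqueness in $\mathcal{H}^{m_0}(\bT)$ follow from a standard fixed-point/Picard argument for the transport-type PDE \eqref{2scale compact}: the operator $\frac{1}{\eps}\partial_\tau$ generates the periodic shift semigroup $e^{-t\partial_\tau/\eps}$, which is an isometry on $\mathcal{H}^{m_0}(\bT)$ (it merely translates $\tau\mapsto\tau-t/\eps$, preserving all periodic Sobolev norms uniformly in $\eps$), while $f_\tau$ from \eqref{ftau} is smooth and locally Lipschitz on $\mathcal{H}^{m_0}$ because $s_0,s_1$ are bounded trigonometric matrix functions and $F$ inherits the smoothness of $B,E$ from Assumption I. For the quantitative bounds, the key observation is that, because of the uniqueness and the matching $U(t,t/\eps)=(q^\intercal,p^\intercal)^\intercal$, the $\tau$-profile of $U(t,\cdot)$ is, up to the shift, controlled by the Chapman--Enskog decomposition $U=\underline{U}+\kappa$ with $\kappa=\mathcal{O}(\eps)$. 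I would use the integral (Duhamel) form along characteristics,
\begin{equation*}
U(t,\tau)=e^{-t\partial_\tau/\eps}U_0(\tau)+\int_0^t e^{-(t-\sigma)\partial_\tau/\eps}f_\tau\big(U(\sigma,\tau)\big)\,d\sigma,
\end{equation*}
and close a Gronwall estimate in $L_t^{\infty}(\mathcal{H}^{m_0})$; the uniform bound $\normss{U_0(\tau)}\le C$ is exactly Assumption II, and the structural bound $\normss{P}\le C\eps$ is inherited from $\norm{w(t)}\le C\eps$ in \eqref{lem1 bound1} via the filtering relation $p=s_0(-t\widehat B_0/\eps)w$ (an isometry) evaluated on the whole $\tau$-profile through the Chapman--Enskog expansion, not merely on the diagonal. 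The nonlinearity bound in \eqref{UBOUND1} is then immediate from \eqref{lem1 bound2} of Lemma \ref{lem1}, since the argument of $F$ in \eqref{UBOUND1} is precisely the reconstruction $(x,w)$ built from $(Q,P)$ and the filters $s_0,s_1$.

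Next, for part \textbf{(ii)}, I would differentiate \eqref{2scale compact} repeatedly in $t$. Writing $\partial_t U=-\frac{1}{\eps}\partial_\tau U+f_\tau(U)$ looks dangerous because of the $1/\eps$, but the correct route is to differentiate the Duhamel formula rather than the PDE directly: since $e^{-t\partial_\tau/\eps}$ is an isometry on $\mathcal{H}^{m_0}$, repeated $t$-differentiation of the integral representation produces only $\partial_\tau$-derivatives hitting $U_0$ (absorbed into higher Sobolev norms of the initial data, which Assumption II supplies because the $\kappa_j$ in \eqref{kapa re2} are built from the bounded operators $A,\Pi$ and smooth $f_\tau$) and $t$-derivatives of $f_\tau(U)$ that, by the chain rule, involve $\partial_t U$ recursively. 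A bootstrap argument, proving $\normss{\partial_t^k U}\le C$ for $k=1$ first and feeding it into $k=2,3,4$, then closes because each $\partial_t f_\tau(U)=\partial_U f_\tau(U)\,\partial_t U$ reproduces only already-controlled quantities with $\eps$-independent constants. The cancellation of the stiff $\frac{1}{\eps}\partial_\tau$ term is the conceptual heart here, and it is guaranteed by working in the co-moving frame provided by the shift semigroup.

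Finally, for part \textbf{(iii)}, I would simply rerun the identical estimates of \textbf{(i)}--\textbf{(ii)} but starting from the sharpened nonlinearity bound \eqref{lem1 bound3}, $\norm{F(x,w)}\le C\eps$, valid in the maximal ordering case. This gives $\normss{F}\le C\eps$ at once, and then the Duhamel representation yields $\normss{\partial_t U}\le\normss{f_\tau(U)}\le C\eps$; the higher derivatives $\partial_t^k U$ each pick up one further factor of $\eps$ through the recursion $\partial_t^k U\sim \partial_t^{k-1}f_\tau(U)$, whose leading contribution is $(\partial_U f_\tau)\partial_t^{k-1}U=\mathcal{O}(\eps)\cdot\mathcal{O}(\eps^{k-1})$, producing $\normss{\partial_t^k U}\le C\eps^{k}$ by induction. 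The main obstacle, I expect, is not any single estimate but justifying rigorously that the $1/\eps$ factor in $\frac{1}{\eps}\partial_\tau$ never contaminates the $t$-derivative bounds; the decisive point is that this operator is the generator of a uniformly bounded ($\eps$-independent) isometry group on the periodic Sobolev scale, so it contributes nothing to the $t$-regularity once one differentiates the Duhamel formula instead of the differential equation. Securing the requisite higher $\tau$-regularity of the constructed initial data \eqref{inv} uniformly in $\eps$ — i.e.\ that $\kappa_1,\kappa_2,\kappa_3$ from \eqref{kapa re2} remain bounded in $\mathcal{H}^{m_0}$ with the stated powers of $\eps$ — is the supporting technical check that makes the bootstrap legitimate.
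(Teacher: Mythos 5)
The central gap is in your part \textbf{(ii)}. Your key claim --- that differentiating the Duhamel formula (equivalently, working in the co-moving frame of the shift semigroup) prevents the stiff term $\frac{1}{\eps}\partial_\tau$ from contaminating the $t$-derivative bounds --- is false. The co-moving frame controls the material derivative $\partial_t+\frac{1}{\eps}\partial_\tau$ (i.e.\ $\partial_t\chi$ for $\chi(t,\tau)=U(t,\tau+t/\eps)$), \emph{not} $\partial_t U$ itself; the two differ by exactly $\frac{1}{\eps}\partial_\tau U$. Concretely, differentiating the Duhamel formula gives $\partial_t\big[e^{-t\partial_\tau/\eps}U_0\big]=-\frac{1}{\eps}\partial_\tau e^{-t\partial_\tau/\eps}U_0$, which for data merely bounded in $\mathcal{H}^{m_0}$ (Assumption II) is of size $\mathcal{O}(1/\eps)$: no amount of $\tau$-regularity can absorb a $1/\eps$ factor --- what is required is \emph{smallness} of $\partial_\tau U_0$, not regularity. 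The bound $\normss{\partial_t U}\leq C$ holds only because the prepared data \eqref{inv} has $\tau$-dependent part of size $\mathcal{O}(\eps)$, so that $V_0=f_\tau(U_0)-\frac{1}{\eps}LU_0=f_\tau(U_0)-L\kappa_1-\eps L\kappa_2-\eps^{2}L\kappa_3=\mathcal{O}_{\mathcal{H}^{m_0}}(1)$. Worse, for $k=2,3,4$ one needs the \emph{non-average part of $V_0$ itself} to be $\mathcal{O}(\eps)$ (so that $\frac{1}{\eps}LV_0=\mathcal{O}(1)$), then the same for $W_0$, and so on; each time derivative consumes one corrector $\kappa_k$. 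Establishing these cancellations --- via $L\kappa_1=(I-\Pi)f_\tau(\underline{U})$, the identities $LA=I-\Pi$, $L^2A=L$, Taylor expansion of $f_\tau$ about $\underline{U}^{[2]}(0)$, and the estimates $U_0-\underline{U}^{[2]}(0)=\mathcal{O}_{\mathcal{H}^{m_0}}(\eps)$, $\underline{U}^{[1]}(0)-\underline{U}^{[2]}(0)=\mathcal{O}_{\mathcal{H}^{m_0}}(\eps^2)$ --- is the bulk of the paper's proof and is entirely absent from your argument; your bootstrap only handles the source terms of the variational equations, not the initial data of those equations, which is where the difficulty lives.

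There are two further gaps. In part \textbf{(i)}, the refined bound $\normss{P}\leq C\eps$ cannot be ``inherited'' from Lemma \ref{lem1}: that lemma constrains the solution only on the diagonal $\tau=t/\eps$, and invoking the Chapman--Enskog expansion off the diagonal is circular, since its validity for $t>0$ with uniform bounds is essentially what is being proved. A plain Gronwall estimate on the Duhamel form also fails, because $F=\frac{\widehat B(x)-\widehat B_0}{\eps}w+\eps E$ is only $\mathcal{O}(1)$, yielding $P=\mathcal{O}(1)$. What the paper does is rerun the \emph{proof} (not the conclusion) of Lemma \ref{lem1} on the shifted system \eqref{2scalenew3}: in the energy identity for $\norm{\mathcal{P}}^2$ the quadratic form of the skew-symmetric matrix $\exp\big(-(\tau+t/\eps)\widehat B_0\big)\frac{\widehat B-\widehat B_0}{\eps}\exp\big((\tau+t/\eps)\widehat B_0\big)$ vanishes identically, so only the $\eps E$ term contributes and Gronwall gives $\mathcal{O}(\eps)$. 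In part \textbf{(iii)}, your induction rests on $\partial_U f_\tau=\mathcal{O}(\eps)$, which is false even in the maximal ordering case: the derivative of $F$ with respect to its velocity argument is $\frac{\widehat B(\eps x)-\widehat B_0}{\eps}=\mathcal{O}(1)$. The improved bounds again come from the initial data ($V_0=\mathcal{O}_{\mathcal{H}^{m_0}}(\eps)$, using $f_\tau(\underline{U}^{[2]}(0))=\mathcal{O}_{\mathcal{H}^{m_0}}(\eps)$ and $\partial_U f_\tau(\underline{U}^{[2]}(0))(U_0-\underline{U}^{[2]}(0))=\mathcal{O}_{\mathcal{H}^{m_0}}(\eps)$) combined with Gronwall, where the smallness is carried by the data and by source terms that are products of lower-order derivatives, not by smallness of the Jacobian.
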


\begin{proof} \textbf{Proof of (i).}
Firstly, combine  the expression \eqref{ftau} and    Assumption I proposed in Theorem \ref{UA thm2} with the results  \eqref{kapa re2} of $\kappa_1,\kappa_2,\kappa_3$. Then noticing the fact that $\Pi,\ A$ are bounded on $C^0(\mathbb{T};\mathcal{H}^{\sigma})$ for any $\sigma$, we get that the initial value $U_0(\tau)=\big(Q^\intercal(0 ,\tau),P^\intercal(0 ,\tau)\big)^\intercal$ determined by \eqref{inv} is  bounded in $\mathcal{H}^{m_0}$ as
\begin{equation}\label{ini0BOUND1}\begin{aligned} &\norms{Q(0 ,\tau)}\leq C,\ \ \norms{P(0 ,\tau)}\leq C \eps. \end{aligned}\end{equation}

  To show the existence and uniqueness of the solution $U(t ,\tau)$,  we introduce
  a new function $\chi(t ,\tau)=U(t ,\tau+t /\eps ).$ In the light of
\eqref{2scale compact}, it is known that
$\partial_{t }\chi(t ,\tau)= f_{\tau+t /\eps }(\chi(t ,\tau))$ with $\chi(0,\tau)=U_0(\tau) \in \mathcal{H}^{m_0}.$
Using the Cauchy–Lipschitz theorem in  $\mathcal{H}^{m_0}$ leads to the existence and uniqueness of the solution $U(t ,\tau)$ in $\mathcal{H}^{m_0}$.

To derive different bounds on $Q(t ,\tau)$  and $P(t ,\tau)$, we transform them  into two new functions $$\mathcal{Q}(t ,\tau):=Q(t ,\tau+t / \eps )+s_0\big( (\tau+t / \eps) \widehat  B_0 \big)  P(t ,\tau+t / \eps ),\  \mathcal{P}(t ,\tau):=  P(t ,\tau+t / \eps ).$$ The two-scale system \eqref{2scalenew} can be transformed to
\begin{equation}\label{2scalenew3}\left\{\begin{split}
  &\partial_{t }\mathcal{Q}(t ,\tau)=\exp\big((\tau+t /\eps  )\widehat B_0\big)\mathcal{P}(t ,\tau)/\eps,\\
    &\partial_{t }\mathcal{P}(t ,\tau)=\exp\big(-(\tau+t /\eps  )\widehat B_0\big)F\big( \mathcal{Q}(t ,\tau), \exp\big((\tau+t /\eps  )\widehat B_0\big)\mathcal{P}(t ,\tau)\big),
    \end{split}\right.
\end{equation} with $\mathcal{Q}(0,\tau)=Q(0,\tau)$ and $\mathcal{P}(0,\tau)=P(0,\tau).$
By the inner product on both sides of \eqref{2scalenew3} with
$\mathcal{Q}(t ,\tau), \mathcal{P}(t ,\tau)$ and  Cauchy-Schwarz inequality, we have
\begin{equation*}
\begin{aligned}
\partial_{t } \norm{\mathcal{Q}(t ,\tau)}^2\leq & 2\norm{\mathcal{Q}(t ,\tau)}\norm{\mathcal{P}(t ,\tau)}/\eps,\\
 \partial_{t } \norm{\mathcal{P}(t ,\tau)}^2\leq& 2\norm{\mathcal{P}^{\intercal}(t ,\tau) \exp\big(-(\tau+t /\eps  )\widehat B_0\big)\frac{\widehat  B(\mathcal{Q}(t ,\tau))-\widehat  B_0}{\eps } \exp\big((\tau+t /\eps  )\widehat B_0\big)\mathcal{P}(t ,\tau)}\\
&+2 \eps   \norm{E(\mathcal{Q}(t ,\tau))} \norm{\mathcal{P}(t ,\tau)}.\end{aligned}\end{equation*}
This system with the initial value \eqref{ini0BOUND1} has a same expression as \eqref{NEE}. Hence  according to  the same arguments of Lemma \ref{lem1},
one gets $\norm{\mathcal{Q}(t ,\tau)}\leq C,  \norm{\mathcal{P}(t ,\tau)}\leq C \eps$,  and so that
$$\normss{Q(t ,\tau)}\leq C, \ \  \normss{P(t ,\tau)}\leq C \eps.$$
The other  result  given in \eqref{UBOUND1} can be easily  proved by the same arguments as Lemma \ref{lem1}.

 \textbf{Proof of  (ii).}
 In what follows, we study the  derivatives of $U$ w.r.t. $t $, which starts with $\partial_{t } U(t ,\tau):= V(t ,\tau)$. Differentiating w.r.t. $t $ on both sides of \eqref{2scale compact}
 leads to   \begin{equation}\label{VE}\partial_{t } V(t ,\tau)+\frac{1}{\eps  }
 \partial_\tau V(t ,\tau)=\partial_{U}  f_\tau(U(t ,\tau))V(t ,\tau).\end{equation}
 Its initial data is derived by
\begin{equation}\label{V0o}
\begin{aligned}
&V_0(\tau):=V(0,\tau)=\partial_{t } U(0,\tau)= f_\tau(U_0(\tau))-\frac{1}{\eps }L
U_0(\tau) \\
=& f_\tau(U_0(\tau))- L \kappa_1\big(\tau,\underline{U}^{[2]}(0)\big)-\eps   L\kappa_2\big(\tau,\underline{U}^{[1]}(0)\big)-\eps^{2 } L\kappa_3\big(\tau,\underline{U}^{[0]}(0)\big).\end{aligned}
.\end{equation}
In the light of the bounds \eqref{UBOUND1}, we get \begin{equation}\label{Vb1}f_\tau(U_0(\tau))=\mathcal{O}_{\mathcal{H}^{m_0}}(1)\end{equation} and then
the scheme \eqref{kapa re2} yields  \begin{equation}\label{Vb2}\kappa_1=\mathcal{O}_{\mathcal{H}^{m_0}}(1),\quad \kappa_2=\mathcal{O}_{\mathcal{H}^{m_0}}(1),\quad \kappa_3=\mathcal{O}_{\mathcal{H}^{m_0}}(1).\end{equation}
Here and from now on, we use the notation $\mathcal{O}_{\mathcal{H}^{m_0}}$ for the terms uniformly bounded in $\mathbb{T}$ with the appropriate $\mathcal{H}^{m_0}$-norm.
{ Considering \eqref{inv}, we have
\begin{equation}\label{U0M}
\begin{aligned} &U_0(\tau)-\underline{U}^{[2]}(0)\\
=&\underline{U}^{[3]}(0)-\underline{U}^{[2]}(0)+\eps \kappa_1\big(\tau,\underline{U}^{[2]}(0)\big)+\eps^{2 } \kappa_2\big(\tau,\underline{U}^{[1]}(0)\big)+\eps^{3 } \kappa_3\big(\tau,\underline{U}^{[0]}(0)\big)\\
=&U_0(0)- \eps  \kappa_1\big(0,\underline{U}^{[2]}(0)\big)- \eps^2  \kappa_2\big(0,\underline{U}^{[1]}(0)\big)- \eps^3  \kappa_3\big(0,\underline{U}^{[0]}(0)\big)-U_0(0)+ \eps  \kappa_1\big(0,\underline{U}^{[1]}(0)\big)\\
&+ \eps^2  \kappa_2\big(0,\underline{U}^{[0]}(0)\big) +\eps \kappa_1\big(\tau,\underline{U}^{[2]}(0)\big)+\eps^{2 } \kappa_2\big(\tau,\underline{U}^{[1]}(0)\big)+\eps^{3 } \kappa_3\big(\tau,\underline{U}^{[0]}(0)\big)\\
=& \eps  \kappa_1\big(0,\underline{U}^{[1]}(0)\big)- \eps  \kappa_1\big(0,\underline{U}^{[2]}(0)\big)+\eps^2  \kappa_2\big(0,\underline{U}^{[0]}(0)\big) - \eps^2  \kappa_2\big(0,\underline{U}^{[1]}(0)\big) \\
&- \eps^3  \kappa_3\big(0,\underline{U}^{[0]}(0)\big) + \eps \kappa_1\big(\tau,\underline{U}^{[2]}(0)\big)+\eps^{2 } \kappa_2\big(\tau,\underline{U}^{[1]}(0)\big)+\eps^{3 } \kappa_3\big(\tau,\underline{U}^{[0]}(0)\big)\\
=&\eps \kappa_1\big(\tau,\underline{U}^{[2]}(0)\big)+\mathcal{O}_{\mathcal{H}^{m_0}}(\eps^2)=\mathcal{O}_{\mathcal{H}^{m_0}}(\eps).\end{aligned}
\end{equation}}With the result of $\kappa_1$ given in \eqref{kapa re2} and by Taylor expansion, it is obtained that
  \[
\begin{aligned}
 & f_\tau(U_0(\tau))- L \kappa_1\big(\tau,\underline{U}^{[2]}(0)\big)\\
  =& f_\tau(U_0(\tau))- LAf_\tau(\underline{U}^{[2]}(0))=\Pi f_\tau(\underline{U}^{[2]}(0))+f_\tau(U_0(\tau))- f_\tau(\underline{U}^{[2]})\\
=&\Pi f_\tau(\underline{U}^{[2]}(0))+  \partial_{U} f_\tau(\underline{U}^{[2]}(0))  \big(U_0(\tau)-\underline{U}^{[2]}(0)\big)+\mathcal{O}_{\mathcal{H}^{m_0}}(\eps^{2})=\mathcal{O}_{\mathcal{H}^{m_0}}(1).\end{aligned}
\]
Considering the scheme of $\kappa_2$, it can be obtained that
  \[
\begin{aligned}
L \kappa_2\big(\tau,\underline{U}^{[1]}(0)\big)=(I-\Pi)\partial_Uf_\tau(\underline{U}^{[1]}(0))Af_\tau(\underline{U}^{[1]}(0))-A\partial_U f_\tau(\underline{U}^{[1]}(0))\Pi f_\tau(\underline{U}^{[1]}(0))\end{aligned}
\]
 and so that $L \kappa_2=\mathcal{O}_{\mathcal{H}^{m_0}}(1)$.  Similarly, one deduces that $L \kappa_3=\mathcal{O}_{\mathcal{H}^{m_0}}(1)$. Based on the above analysis, the initial data \eqref{V0o} is bounded by $V_0(\tau)=\mathcal{O}_{\mathcal{H}^{m_0}}(1)$.
 Therefore, using the bootstrap type argument and Gronwall lemma to the differential equation \eqref{VE} gives the  existence, uniqueness and
  estimate
$$ \normss{ V(t ,\tau)} \leq
 \normss{V_0(\tau)}  e^{C  t }\leq
 \normss{V_0(\tau)}  e^{C  T}\leq C  \ \ \textmd{for}\ \  t \in[0,T].$$

 For the second derivative  $\partial^2_{t } U(t ,\tau):= W(t ,\tau)$, it satisfies the following differential equation
  \begin{equation}\label{2df}
  \partial_{t } W(t ,\tau)+\frac{1}{\eps }
 \partial_\tau W(t ,\tau)=\partial^2_U  f_\tau(U(t ,\tau)) (V(t ,\tau),V(t ,\tau))+\partial_{U}  f_\tau(U(t ,\tau)) W(t ,\tau),\end{equation}
with the initial data
  \[
\begin{aligned}
W_0(\tau):=&W(0,\tau)=
\partial_{t } V(0,\tau)=  \partial_{U}  f_\tau(U_0(\tau))V_0(\tau)-\frac{1}{\eps }L V_0(\tau).\end{aligned}
\]
Inserting $V_0(\tau)$ \eqref{V0o} into this data and after some  calculations, we can express   $W_0(\tau)$ as
{ \[\begin{aligned}
W_0(\tau)
= &   \big(\partial_{U}  f_\tau(U_0(\tau))-\frac{1}{\eps }L\big)\Big(\Pi f_\tau(\underline{U}^{[2]}(0))+  \partial_{U} f_\tau(\underline{U}^{[2]}(0))  \big(U_0(\tau)-\underline{U}^{[2]}(0)\big)\\
& -\eps   L\kappa_2\big(\tau,\underline{U}^{[1]}(0)\big)-\eps^{2 } L\kappa_3\big(\tau,\underline{U}^{[0]}(0)\big)+\mathcal{O}_{\mathcal{H}^{m_0}}(\eps^{2})\Big)\\
= &   \partial_{U}  f_\tau(U_0(\tau))\Pi f_\tau(\underline{U}^{[2]}(0))-\frac{1}{\eps }L\partial_{U} f_\tau(\underline{U}^{[2]}(0))  \big(U_0(\tau)-\underline{U}^{[2]}(0)\big)\\
&+L^2\kappa_2\big(\tau,\underline{U}^{[1]}(0)\big)+\mathcal{O}_{\mathcal{H}^{m_0}}(\eps).\end{aligned}
\]
In the light of this result and \eqref{U0M},  it is deduced that the dominating  part of $W_0(\tau)$ is
              \begin{equation*}
\begin{aligned}
 & \partial_{U}  f_\tau(U_0(\tau))\Pi f_\tau(\underline{U}^{[2]}(0))- L\partial_{U} f_\tau(\underline{U}^{[2]}(0))  \kappa_1\big(\tau,\underline{U}^{[2]}(0)\big)+L^2\kappa_2\big(\tau,\underline{U}^{[1]}(0)\big)\\
  =& \partial_{U}  f_\tau(U_0(\tau))\Pi f_\tau(\underline{U}^{[2]}(0))- L\partial_{U} f_\tau(\underline{U}^{[2]}(0))  \kappa_1\big(\tau,\underline{U}^{[2]}(0)\big)+L^2\kappa_2\big(\tau,\underline{U}^{[2]}(0)\big)+\mathcal{R}_1\\
    =& \partial_{U}  f_\tau(U_0(\tau))\Pi f_\tau(\underline{U}^{[2]}(0))- L\partial_{U} f_\tau(\underline{U}^{[2]}(0))  Af_\tau(\underline{U}^{[2]}(0))  \\&+L^2A\partial_{U} f_\tau(\underline{U}^{[2]}(0))  Af_\tau(\underline{U}^{[2]}(0)) -L^2A^2 \partial_{U}  f_\tau(\underline{U}^{[2]}(0))\Pi f_\tau(\underline{U}^{[2]}(0))+\mathcal{R}_1\\
        =& \partial_{U}  f_\tau(U_0(\tau))\Pi f_\tau(\underline{U}^{[2]}(0))- L\partial_{U} f_\tau(\underline{U}^{[2]}(0))  Af_\tau(\underline{U}^{[2]}(0))  \\&+L \partial_{U} f_\tau(\underline{U}^{[2]}(0))  Af_\tau(\underline{U}^{[2]}(0)) -(I-\Pi) \partial_{U}  f_\tau(\underline{U}^{[2]}(0))\Pi f_\tau(\underline{U}^{[2]}(0))+\mathcal{R}_1\\
                =& \partial_{U}  f_\tau(U_0(\tau))\Pi f_\tau(\underline{U}^{[2]}(0)) - \partial_{U}  f_\tau(\underline{U}^{[2]}(0))\Pi f_\tau(\underline{U}^{[2]}(0)) +\Pi \partial_{U}  f_\tau(U_0)\Pi f_\tau(\underline{U}^{[2]}(0))+\mathcal{R}_1\\
   = &\Pi \partial_{U}  f_\tau(U_0(\tau))\Pi f_\tau(\underline{U}^{[2]}(0))+ \mathcal{R}_1+ \mathcal{R}_2,
\end{aligned}
  \end{equation*}
  with the remainders
        \begin{equation*}
\begin{aligned}
 \mathcal{R}_1=&L^2\kappa_2\big(\tau,\underline{U}^{[1]}(0)\big)- L^2\kappa_2(\tau,\underline{U}^{[2]}(0)),\ \
  \mathcal{R}_2= \big(\partial_{U}  f_\tau(U_0(\tau)) - \partial_{U}  f_\tau(\underline{U}^{[2]}(0))\big)\Pi f_\tau(\underline{U}^{[2]}(0)),
\end{aligned}
  \end{equation*}
where we have used  $LA=I-\Pi$ and $L^2A=L$.
Utilizing the estimates  \eqref{U0M} and
        \begin{equation*}
\begin{aligned}
\underline{U}^{[1]}(0)-\underline{U}^{[2]}(0)=& \eps  \kappa_1\big(0,\underline{U}^{[1]}(0)\big)+\eps^2  \kappa_2\big(0,\underline{U}^{[0]}(0)\big)-\eps  \kappa_1\big(0,\underline{U}^{[0]}(0)\big)=\mathcal{O}_{\mathcal{H}^{m_0}}(\eps^2),
\end{aligned}
  \end{equation*}
  we have
 $ \mathcal{R}_2=\mathcal{O}_{\mathcal{H}^{m_0}}(\eps^2)$ and    $ \mathcal{R}_1=\mathcal{O}_{\mathcal{H}^{m_0}}(\eps)$, respectively.
Considering these results and \eqref{Vb1}-\eqref{Vb2}, we deduce that $W_0(\tau)=\mathcal{O}_{\mathcal{H}^{m_0}}(1 )$ and $LW_0(\tau)=L\mathcal{R}_1+ L\mathcal{R}_2+\mathcal{O}_{\mathcal{H}^{m_0}}(\eps)=\mathcal{O}_{\mathcal{H}^{m_0}}(\eps)$ (this estimate is needed for deriving the bound of $\partial^3_{t} U(t ,\tau)$).}
Using Gronwall lemma to   \eqref{2df}  again, we have
$$ \normss{ W(t ,\tau)} \leq
\big(\normss{W_0(\tau)}+C t   \big)  e^{C   t }\leq C   \ \ \ \textmd{for}\ \  t \in[0,T].$$

This procedure can be proceeded in an analogous way for $\partial^3_{t } U(t ,\tau)$ and $\partial^4_{t } U(t ,\tau)$, and the statements \eqref{UBOUND2} for $k=3,4$ are obtained.

 {  \textbf{Proof of (iii).}
In  the case of maximal ordering magnetic field,  using
the form $B(\eps x)$, one gets
 $$\normss{\widehat B\big(\eps Q(t ,\tau)+\eps s_1(\tau\widehat  B_0)P(t ,\tau)\big)-\widehat  B_0}\leq C \eps.$$
This bound and the second  result of \eqref{UBOUND1}  immediately yield
\[
\begin{aligned}  &\normss{F\big( Q(t ,\tau)+s_1(\tau\widehat B_0)P(t ,\tau),s_0(\tau\widehat B_0)P(t ,\tau)\big)}\\
\leq & \normss{\frac{\widehat  B\big(\eps Q(t ,\tau)+\eps s_1(\tau\widehat B_0)P(t ,\tau)\big)-\widehat   B_0}{\eps }s_0(\tau \widehat B_0)P(t ,\tau)}  \\ &+\eps \normss{E\big(Q(t ,\tau)+ s_1(\tau\widehat B_0)P(t ,\tau)\big)}\leq C\eps.
\end{aligned}
\]
This shows the first statement of \eqref{IUBOUND}.
For the second estimate of \eqref{IUBOUND}, we only present the proof for $k=1$ for brevity.
From the analysis given in the proof of  (ii), it is easy to derive the initial value $V_0$ of \eqref{VE}, i.e.,
\[\begin{aligned}
V_0(\tau)
= &   \Pi f_\tau(\underline{U}^{[2]}(0))+  \partial_{U} f_\tau(\underline{U}^{[2]}(0)) \big (U_0(\tau)-\underline{U}^{[2]}(0)\big) \\
&-\eps   L\kappa_2\big(\tau,\underline{U}^{[1]}(0)\big)-\eps^{2 } L\kappa_3\big(\tau,\underline{U}^{[0]}(0)\big)+\mathcal{O}_{\mathcal{H}^{m_0}}(\eps^{2}).\end{aligned}
\]
Considering  $f_\tau(\underline{U}^{[2]}(0))=\mathcal{O}_{\mathcal{H}^{m_0}}(\eps)$ and $\partial_{U} f_\tau(\underline{U}^{[2]}(0)) \big (U_0(\tau)-\underline{U}^{[2]}(0)\big)=\mathcal{O}_{\mathcal{H}^{m_0}}(\eps)$, it is deduced that
$V_0(\tau)=\mathcal{O}_{\mathcal{H}^{m_0}}(\eps).$
Then using  Gronwall lemma to  \eqref{VE}, we get  $$\normss{\partial_{t } U(t ,\tau)}\leq C \eps.$$
The estimates for $\partial^k_{t } U(t ,\tau)$ with $k=2,3,4$ can be derived with the same arguments.

The proof is complete. }
\end{proof}

So far, the analysis of the transformed systems \eqref{charged-particle2 ccc} and \eqref{2scale compact} has been finished. In the rest part of this section, we are devoted to  the convergence of fully discrete scheme.  We start with the error brought by the Fourier pseudospectral method in the variable $\tau\in\bT:=[-\pi,\pi]$.
For this purpose,  a transitional integrator will be introduced    and then the estimations for  the original integrator  can be turned to the estimations for this transitional integrator.
To this end, choose an even positive integer $N_\tau$ for the set $\mathcal{M}: =\{-N_\tau/2,-N_\tau/2+1,\ldots,N_\tau/2-1\}$ and define the standard projection operator  $P_{\mathcal{M}} : L^2([-\pi,\pi]) \rightarrow
Y_{\mathcal{M}}:=\textmd{span}\{\exp( \mathrm{i}   k \tau ),\ k\in \mathcal{M},\ \tau \in[-\pi,\pi]\}
$ as
$$(P_{\mathcal{M}}\vartheta)(\tau)=\sum\limits_{k\in \mathcal{M}} \widetilde{\vartheta}_k \exp( \mathrm{i}k \tau).$$
Here $ \widetilde{\vartheta}_k$ are the Fourier   transform coefficients of the periodic function $\vartheta(\tau)$ which are defined as
$$ \widetilde{\vartheta}_k=\frac{1}{2\pi}\int_{-\pi}^{\pi}\vartheta(\tau)\exp(-\mathrm{i}k\tau)d\tau,\ \ \  k\in \mathcal{M}.$$
The transitional integrator (TI)  of the two-scale system \eqref{2scale compact} is given by
\begin{equation}\label{TIM}\begin{array}[c]{ll}%
&\mathcal{U}_j^{ni}:=\sum\limits_{k\in \mathcal{M}} \widetilde{\mathbf{U}}^{ni}_{k,j}
\exp(\mathrm{i}j \tau )\approx P_{\mathcal{M}} U_j\big((n +c_i)h,\tau\big),\ \ i=1,2,\ldots,s,\\
&\mathcal{U}_j^{n}=\sum\limits_{k\in \mathcal{M}}\widetilde{\mathbf{U}}^{n}_{k,j}
\exp(\mathrm{i}j \tau )\approx P_{\mathcal{M}} U_j(nh, \tau),\ \ j=1,2,\ldots,6,\ \ n=1,2,\ldots,T/h,\end{array}\end{equation}
where   $s$-stage explicit exponential integrators are used  for computing $ \widetilde{\mathbf{U}}^{ni}$ and $ \widetilde{\mathbf{U}}^{n}$:  \begin{equation}\label{ei-sc-f}
\begin{array}[c]{ll}%
\widetilde{\mathbf{U}}^{ni}&=\exp(c_{i}hM) \widetilde{\mathbf{U}}^{n}+h\textstyle\sum\limits_{j=1}^{i-1}\bar{a}_{ij}(hM)\widetilde{\mathbf{F}}( \mathcal{U}^{ni}),\ \
i=1,2,\ldots,s,\\
\widetilde{\mathbf{U}}^{n+1}&=\exp( hM)  \widetilde{\mathbf{U}}^{n}+h\textstyle\sum\limits_{j=1}^{s}
\bar{b}_{j}(hM)\widetilde{\mathbf{F}}( \mathcal{U}^{nj}),\ \
 \quad n=0,1,\ldots,T/h-1,
\end{array}\end{equation}
with   the   Fourier   coefficient  $\widetilde{\mathbf{F}}$ of $\mathbf{F}$.
The started value $\widetilde{\mathbf{U}}^{0}$ is given by $$\widetilde{\mathbf{U}}_k^{0}:=\frac{1}{2\pi}\int_{-\pi}^{\pi}U_0(\tau)\exp(-\mathrm{i}k\tau)d\tau,\ \ \  k\in \mathcal{M},$$ with $U_0(\tau)$ given in \eqref{inv}. 

\begin{lem}\label{lem3} For solving the two-scale system \eqref{2scale compact}, define the projected errors of   the transitional integrator \eqref{TIM}--\eqref{ei-sc-f} as
$$e^{n}_{\mathrm{proj}}=\mathcal{U}^{n}-P_{\mathcal{M}} U(nh, \tau),\ \ \ e^{ni}_{\mathrm{proj}}=\mathcal{U}^{ni}-P_{\mathcal{M}} U\big((n +c_i)h,\tau\big),$$
where $n=1,2,\ldots,T/h$ and $i=1,2,\ldots,s.$
Introduce another operator  $I_{\mathcal{M}} : C([-\pi,\pi]) \rightarrow
Y_{\mathcal{M}}
$ which is defined as
$$(I_{\mathcal{M}}\vartheta)(\tau)=\sum\limits_{k\in \mathcal{M}}\widehat{\vartheta}_k e^{ \mathrm{i}k \tau } \  \textmd{with\ discrete\ Fourier\  coefficients}\ \widehat{\vartheta}_k.$$
The errors between  the original scheme \eqref{ei-sc}-\eqref{nsfor2scale} and the two-scale system \eqref{2scale compact}  are denoted by
$$e^{n}=I_{\mathcal{M}}\mathbf{U}^{n}-  U(nh, \tau ),\ \ \ e^{ni}=I_{\mathcal{M}}\mathbf{U}^{ni}-  U\big((n +c_i)h, \tau\big).$$ Under the conditions of Assumption II given in Theorem \ref{UA thm2},
the above two errors satisfy
\begin{equation}\label{err rea}
\begin{array}[c]{ll}%
 \norms{e^{n}}\leq \norms{e^{n}_{\mathrm{proj}}}+ C(2\pi/N_{\tau})^{m_0},\ \  \norms{e^{ni}}\leq \norms{e^{ni}_{\mathrm{proj}}}+ C(2\pi/N_{\tau})^{m_0}.
\end{array}\end{equation}
\end{lem}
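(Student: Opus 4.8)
The plan is to control $e^n$ by inserting the transitional (Galerkin) quantities and reducing everything to elementary Fourier-approximation errors plus one discrete stability argument. Writing
\[
e^n=\big(I_{\mathcal{M}}\mathbf{U}^n-\mathcal{U}^n\big)+\big(\mathcal{U}^n-P_{\mathcal{M}}U(nh,\cdot)\big)+\big(P_{\mathcal{M}}U(nh,\cdot)-U(nh,\cdot)\big),
\]
the middle bracket is exactly $e^n_{\mathrm{proj}}$, while the last bracket is the spectral truncation error, which the regularity $U(nh,\cdot)\in\mathcal{H}^{m_0}$ from Lemma \ref{lem2} together with standard Fourier approximation theory bounds by $C(2\pi/N_\tau)^{m_0}$. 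The same triangle inequality applies stage-wise to $e^{ni}$. Hence the entire lemma reduces to proving $\norms{I_{\mathcal{M}}\mathbf{U}^n-\mathcal{U}^n}\leq C(2\pi/N_\tau)^{m_0}$, i.e. that the pseudospectral polynomial reconstructed from the discrete Fourier coefficients $\widehat{\mathbf{U}}^n$ stays $(2\pi/N_\tau)^{m_0}$-close to the Galerkin polynomial $\mathcal{U}^n$.

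First I would collect the classical estimates on $\bT$: for a smooth periodic function the truncation error $\norms{(I-P_{\mathcal{M}})\vartheta}$, the interpolation error $\norms{(I-I_{\mathcal{M}})\vartheta}$, and in particular the \emph{aliasing} error $\norms{(I_{\mathcal{M}}-P_{\mathcal{M}})\vartheta}$ are all $\leq C(2\pi/N_\tau)^{m_0}$, with a constant controlled by a suitable higher Sobolev norm of $\vartheta$. I would also record the two stability facts that make the exponential scheme behave uniformly in $\eps$: since $M=\mathrm{i}\Omega$ with $\Omega$ real diagonal, $\exp(c_ihM)$ multiplies each Fourier mode by a phase and is therefore an isometry on $\mathcal{H}^{m_0}$, while each $\varphi_\rho(c_ihM)$ is a bounded function of the skew-Hermitian matrix $hM$ and so is bounded on $\mathcal{H}^{m_0}$ uniformly in $h$ and $\eps$; the operators $P_{\mathcal{M}},I_{\mathcal{M}}$ are likewise bounded on $Y_{\mathcal{M}}$.

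Setting $g^n:=I_{\mathcal{M}}\mathbf{U}^n-\mathcal{U}^n$ and its stage analogues $g^{ni}$, I would then subtract the recursion \eqref{ei-sc}, rewritten in polynomial form in $Y_{\mathcal{M}}$, from \eqref{ei-sc-f}. The initial discrepancy $g^0=I_{\mathcal{M}}U_0-P_{\mathcal{M}}U_0$ is the aliasing error of $U_0\in\mathcal{H}^{m_0}$ (Assumption II), hence $\leq C(2\pi/N_\tau)^{m_0}$. At each stage the only genuinely new contribution is the mismatch between the pseudospectral nonlinearity (interpolation $I_{\mathcal{M}}f_\tau$) and the Galerkin nonlinearity (projection $P_{\mathcal{M}}f_\tau$) evaluated at the same argument; this is again an aliasing error of $f_\tau$, which is $\leq C(2\pi/N_\tau)^{m_0}$ uniformly in $\eps$ because Lemma \ref{lem2} guarantees $f_\tau$ along the trajectory is uniformly bounded in $\mathcal{H}^{m_0}$. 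The remaining term is Lipschitz in $g$, and the internal stages are eliminated in the usual explicit manner. Applying the isometry of $\exp(hM)$, the uniform bound on $\varphi_\rho(hM)$, and a discrete Gronwall inequality over the $n\leq T/h$ steps yields $\norms{g^n}\leq C(2\pi/N_\tau)^{m_0}$, which combined with the first paragraph proves \eqref{err rea}.

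The delicate point, and the step I expect to cost the most work, is keeping the Gronwall constant independent of $\eps$. The raw Lipschitz constant of $f_\tau$, through $F=\frac{\widehat B(x)-\widehat B_0}{\eps}w+\eps E(x)$, is $\mathcal{O}(1/\eps)$ in the $P$-direction, so a naive estimate would produce the useless factor $e^{CT/\eps}$. As in the proofs of Lemmas \ref{lem1} and \ref{lem2}, this is resolved by passing to the rotating coordinates and using that $\widehat B(x)-\widehat B_0$ is skew-symmetric: conjugated by the orthogonal matrices $\exp(\pm\tau\widehat B_0)$ it is annihilated in the energy estimate for the difference $g$, while the surviving stiff contribution is tamed by the a priori bound $\norms{P}\leq C\eps$ that holds for both the pseudospectral and the transitional trajectories. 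This reduces the effective Lipschitz constant to $\mathcal{O}(1)$ and restores the $\eps$-uniform constant $C$ claimed in \eqref{err rea}.
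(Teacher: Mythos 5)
Your core decomposition is exactly the paper's: its entire proof of this lemma is the same triangle inequality, inserting $\mathcal{U}^{n}$ and $P_{\mathcal{M}}U(nh,\tau)$, keeping the middle bracket as $e^{n}_{\mathrm{proj}}$, and absorbing \emph{both} $\norms{P_{\mathcal{M}}U(nh,\tau)-U(nh,\tau)}$ and $\norms{I_{\mathcal{M}}\mathbf{U}^{n}-\mathcal{U}^{n}}$ into $C(2\pi/N_{\tau})^{m_0}$ with a single citation to \cite{Shen}. Where you genuinely differ is in treating $\norms{I_{\mathcal{M}}\mathbf{U}^{n}-\mathcal{U}^{n}}$ as the real content of the lemma and supplying an argument for it (initial aliasing error, stage-wise aliasing of the nonlinearity, isometry of $\exp(hM)$ on $\mathcal{H}^{m_0}$, uniform boundedness of the $\varphi_\rho(hM)$, discrete Gronwall), whereas the paper simply asserts it. You are right that this term is not the projection or interpolation error of any single function: $\mathbf{U}^{n}$ and $\mathcal{U}^{n}$ are two \emph{different} numerical evolutions (collocation versus Galerkin), so bounding their distance does require a stability-plus-Gronwall comparison of the two recursions; in that sense your proof is more complete and self-contained, at the cost of front-loading a Gronwall argument of the same type that the paper defers to Lemma \ref{lem5} and the proof of Theorem \ref{UA thm2}. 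One caution on your last paragraph, though: the skew-symmetry cancellation you invoke is an inner-product (energy) mechanism, and it is used in the paper only for the \emph{continuous-time} estimates of Lemmas \ref{lem1} and \ref{lem2}; in the discrete recursion for $g^{n}$ you never take inner products against the exact skew operator, you only need norm bounds on differences of $\mathbf{F}$, and there skew-symmetry cancels nothing. The $\eps$-uniform effective Lipschitz constant must instead come from componentwise bookkeeping, pairing the $\mathcal{O}(1/\eps)$ Jacobian block in the $P$-direction against the $\mathcal{O}(\eps)$ size of the $P$-components and $P$-errors -- the same nontrivial point that the paper itself leaves implicit when it declares $J_n$ and $\mathcal{N}^{ni}$ bounded. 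So your plan is sound and strictly more detailed than the published proof, but its most delicate step is sketched with the wrong mechanism and would need to be reworked along these lines.
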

\begin{proof}
By using the
triangle inequality and the estimates on projection error \cite{Shen}, we immediately get that
\[
\begin{aligned}  &\norms{I_{\mathcal{M}}\mathbf{U}^{n}-  U(nh,\tau)}
\leq  \norms{\mathcal{U}^{n}-P_{\mathcal{M}} U(nh, \tau)}+ \norms{I_{\mathcal{M}}\mathbf{U}^{n}-\mathcal{U}^{n}}\\&\qquad+ \norms{P_{\mathcal{M}} U(nh,\tau)-  U(nh,\tau)}
 \leq  \norms{e^{n}_{\mathrm{proj}}}+ C(2\pi/N_{\tau})^{m_0}.
\end{aligned}
\]
 The second statement can be shown in a same way.
\end{proof}

By this lemma, it follows that  the results for $e^n, e^{ni}$   can be turned to  the estimations for  $e^{n}_{\mathrm{proj}},\ e^{ni}_{\mathrm{proj}}$. 
In what follows, we study these error bounds on  the transitional integrator  \eqref{TIM}--\eqref{ei-sc-f}, whose
  error system   is presented as
$$(e^{ni}_{\mathrm{proj}})_{j}=\sum\limits_{k\in \mathcal{M}}
E_{j,k}^{ni} \exp(\mathrm{i} k  \tau),\ \
(e^{n}_{\mathrm{proj}})_{j}=\sum\limits_{k\in \mathcal{M}}
E_{j,k}^{n} \exp(\mathrm{i} k  \tau),$$
where the error equation reads
\begin{equation}\label{error equ}
\begin{aligned}
E^{ni}&=\exp( c_ihM)E^{n}+h \sum\limits_{j=1}^{i-1}\bar{a}_{i,j}(hM) \Delta \mathbf{F} ^{nj} + \delta^{ni},\\
E^{n+1}&=\exp( hM)E^{n}+h \sum\limits_{j=1}^{s}\bar{b}_{j}(hM) \Delta \mathbf{F} ^{nj} + \delta^{n+1},
\end{aligned}
\end{equation}
with
$\Delta \mathbf{F} ^{nj}=  \widetilde{\mathbf{F}}\big(P_{\mathcal{M}} U(nh+c_jh,\tau)\big)-\widetilde{\mathbf{F}}( \mathcal{U}^{nj}).$
 Here the remainders
$ \delta^{ni}$  and
$ \delta^{n+1}$ are determined by  inserting the exact solution $\widetilde{\mathbf{U}}(t ):= \big(\widetilde{U}_{k,j}(t ,\tau)\big)_{k\in \mathcal{M},\ j=1,2,\ldots,6}$
into the scheme \eqref{ei-sc-f}, {i.e.,}
\begin{equation}\label{remainders1}
\begin{aligned}
&\widetilde{\mathbf{U}}(nh+c_ih)=\exp( c_ihM) \widetilde{\mathbf{U}}(nh)+h\textstyle\sum\limits_{j=1}^{i-1}\bar{a}_{ij}(hM)\widetilde{\mathbf{F}}\big(P_{\mathcal{M}} U(nh+c_jh,\tau)\big)+\delta^{ni},\\
&\widetilde{\mathbf{U}}(nh+h)=\exp( hM) \widetilde{\mathbf{U}}(nh)+h\textstyle\sum\limits_{j=1}^{s}
\bar{b}_{j}(hM)\widetilde{\mathbf{F}}\big(P_{\mathcal{M}} U(nh+c_jh,\tau)\big)+\delta^{n+1}.
\end{aligned}
\end{equation}
The bounds of these remainders are derived in the following lemma.

\begin{lem} \label{lem5}
Suppose that the conditions  given in Theorem \ref{UA thm2} hold.
The remainders
$ \delta^{ni}$  and
$ \delta^{n+1}$ defined in \eqref{remainders1}  for $i=1,2,\ldots,s$ and $n=0,1,\ldots,T/\eps -1$ are bounded by
\begin{equation*}
\begin{aligned} &\textmd{MO1-E}:\quad \norms{  \delta^{ni}}\leq C    h,\quad\     \norms{  \delta^{n+1}}\leq C\big(h\norms{\psi_{1}(h  M) }+
h^{2}\big),\\
 &\textmd{MO2-E}:\quad \norms{  \delta^{ni}}\leq C    h^{2},\quad   \norms{  \delta^{n+1}}\leq C\big(h^2\norms{\psi_{2}(h  M) }+
h^{3}\big),\\
 &\textmd{MO3-E}:\quad \norms{  \delta^{ni}}\leq C
h^{3},\quad   \norms{  \delta^{n+1}}\leq C \big(h^3\norms{\psi_{3}(h  M) }+
h^{4}\big),\\
 &\textmd{MO4-E}:\quad \norms{  \delta^{ni}}\leq C
h^{4},\quad   \norms{  \delta^{n+1}}\leq C \big(h^4\norms{\psi_{4}(h  M) }+
h^{5}\big),\end{aligned} 
\end{equation*}
{ and for the maximal ordering magnetic field, they are improved to be
\begin{equation*}
\begin{aligned} &\textmd{MO1-E}:\quad \norms{  \delta^{ni}}\leq C \eps   h,\quad\ \    \norms{  \delta^{n+1}}\leq C\big(\eps
h \norms{\psi_{1}(h  M) }+ \eps^2
h^{2}\big),\\
 &\textmd{MO2-E}:\quad \norms{  \delta^{ni}}\leq C \eps^2   h^{2},\quad   \norms{  \delta^{n+1}}\leq C\big(\eps^2
h^{2}\norms{\psi_{2}(h  M) }+  \eps ^{3}
h^{3}\big),\\
 &\textmd{MO3-E}:\quad \norms{  \delta^{ni}}\leq C \eps ^{3}
h^{3},\quad   \norms{  \delta^{n+1}}\leq C \big(\eps^3
h^{3}\norms{\psi_{3}(h  M) }+ \eps ^{4}
h^{4}\big),\\
 &\textmd{MO4-E}:\quad \norms{  \delta^{ni}}\leq C \eps^{4}
h^{4},\quad   \norms{  \delta^{n+1}}\leq C \big(\eps^4
h^{4} \norms{\psi_{4}(h  M) }+ \eps ^{5}
h^{5}\big),\end{aligned} 
\end{equation*}}where $\psi_{\rho}(z):=\varphi_{\rho}(z)-\sum_{i=1}^s\bar{b}_i(z)\frac{c_i^{\rho-1}}{(\rho-1)!}$ for $\rho=1,2,3,4$.

\end{lem}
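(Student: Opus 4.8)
The plan is to treat Lemma~\ref{lem5} as a one-step defect analysis for the exponential Runge--Kutta scheme \eqref{ei-sc-f}, carried out in the stiff-order-condition framework of \cite{Ostermann,Ostermann06}, with all $\eps$-uniform (and $\eps$-improved) control of the temporal regularity of the forcing supplied by Lemma~\ref{lem2}. First I would write down the exact one-step relations for the reference solution $\widetilde{\mathbf{U}}(t)$ via the variation-of-constants (Duhamel) formula. Denoting by $g(t)$ the forcing that the scheme evaluates along the exact solution (i.e.\ $g(t)=\widetilde{\mathbf{F}}\big(P_{\mathcal{M}}U(t,\cdot)\big)$, so that $\widetilde{\mathbf{U}}$ satisfies $\tfrac{d}{dt}\widetilde{\mathbf{U}}=M\widetilde{\mathbf{U}}+g$ with $M=\mathrm{i}\Omega$, up to a harmless projection term of the pseudospectral type already isolated in Lemma~\ref{lem3}), Duhamel's formula gives
\[
\widetilde{\mathbf{U}}(nh+c_ih)=e^{c_ihM}\widetilde{\mathbf{U}}(nh)+\int_0^{c_ih}e^{(c_ih-\sigma)M}g(nh+\sigma)\,d\sigma,\qquad
\widetilde{\mathbf{U}}(nh+h)=e^{hM}\widetilde{\mathbf{U}}(nh)+\int_0^{h}e^{(h-\sigma)M}g(nh+\sigma)\,d\sigma.
\]
Subtracting the corresponding lines of \eqref{ei-sc-f} isolates $\delta^{ni}$ and $\delta^{n+1}$ in \eqref{remainders1}. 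The crucial structural observation I would record at the outset is that $M$ is skew-Hermitian, so $\|e^{hM}\|=1$ and $\|\varphi_\rho(hM)\|,\|\psi_\rho(hM)\|\le C$ uniformly in $h$ and $\eps$; consequently $hM=\mathcal{O}(h/\eps)$ must \emph{never} be expanded, and the only admissible expansion is the Taylor expansion in time of the smooth forcing $g$.

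Next I would Taylor-expand $g(nh+\sigma)=\sum_{l=0}^{r-1}\tfrac{\sigma^l}{l!}g^{(l)}(nh)+R_r(\sigma)$ and use the identity $\int_0^{ch}e^{(ch-\sigma)M}\tfrac{\sigma^l}{l!}\,d\sigma=(ch)^{l+1}\varphi_{l+1}(chM)$ to convert each Duhamel integral into a $\varphi$-weighted combination of the $g^{(l)}(nh)$. The quadratures appearing in \eqref{ei-sc-f} produce the competing combinations $h^{l+1}\big(\sum_j\bar b_j(hM)\tfrac{c_j^l}{l!}\big)g^{(l)}$ for the update, and the analogue with $\bar a_{ij}$ for the internal stages. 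The coefficient mismatches are exactly the stiff order conditions, whose update residuals are precisely $\psi_\rho(hM)=\varphi_\rho(hM)-\sum_i\bar b_i(hM)\tfrac{c_i^{\rho-1}}{(\rho-1)!}$. Since the coefficients of MO$r$-E are chosen so that the internal-stage conditions hold through order $r-1$ and the update identities $\psi_1=\cdots=\psi_{r-1}=0$ hold as operator identities, the leading surviving contributions are $\delta^{ni}=\mathcal{O}(h^r)$ and $\delta^{n+1}=h^r\psi_r(hM)g^{(r-1)}(nh)+\mathcal{O}(h^{r+1})$, the remainder being an $e^{(h-\sigma)M}$-weighted integral of the Taylor tail involving $g^{(r)}$.

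Finally I would bound the temporal derivatives of the forcing. By the chain rule, $g^{(l)}$ is a universal polynomial in $\partial_U^m f_\tau$ and $\partial_t^1U,\dots,\partial_t^lU$, so Lemma~\ref{lem2}(ii) yields $\norms{g^{(l)}}\le C$ for $l\le r$ in the general case; combined with the uniform boundedness of $\varphi_\rho(hM),\psi_\rho(hM)$ this gives the first block of estimates, with $\psi_r(hM)$ retained explicitly. In the maximal-ordering case, Lemma~\ref{lem2}(iii) gives $\normss{\partial_t^kU}\le C\eps^{k}$ and $\normss{f_\tau(U)}\le C\eps$, hence $\norms{g^{(l)}}\le C\eps^{\,l+1}$, which inserts the extra factors $\eps^{r}$ (on the $\psi_r$ term) and $\eps^{r+1}$ (on the remainder) and produces the improved bounds.

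The main obstacle is not the defect bookkeeping, which is routine once the stiff order conditions of the four concrete schemes are verified against the cited constructions, but rather securing the $\eps$-uniform and $\eps$-improved control of $g^{(l)}$; this is exactly the content of Lemma~\ref{lem2}, which is why that lemma is the pivotal input. The only genuinely delicate methodological point is that $hM$ is kept unexpanded because $h/\eps$ may be large, so the factor $\psi_r(hM)$, bounded in norm but \emph{not} small when $h\gg\eps$, must survive in the statement; its presence is harmless here and is precisely the quantity exploited by the discrete variation-of-constants (telescoping of the unitary $e^{hM}$) in the subsequent global error estimate.
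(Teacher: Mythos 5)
Your proposal follows essentially the same route as the paper's proof: variation of constants (Duhamel) for $\widetilde{\mathbf{U}}$, subtraction of the scheme \eqref{ei-sc-f} to isolate the defects, Taylor expansion in time of $G(t)=\widetilde{\mathbf{F}}\big(P_{\mathcal{M}}U(t,\tau)\big)$ so that the coefficient mismatches appear exactly as $\psi_{\rho}(hM)$ and the stage analogues $\psi_{\rho,i}(hM)$, bounds on $\tfrac{\textmd{d}^{\rho-1}}{\textmd{d}t^{\rho-1}}G$ drawn from Lemma \ref{lem2} (namely $\mathcal{O}_{\mathcal{H}^{m_0}}(1)$ in general and $\mathcal{O}_{\mathcal{H}^{m_0}}(\eps^{\rho})$ for maximal ordering, which is your $\norms{g^{(l)}}\leq C\eps^{l+1}$ with $l=\rho-1$), and finally verification of the stiff order conditions for the four concrete schemes. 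Your additional remarks (skew-Hermitian $M$, keeping $\psi_r(hM)$ unexpanded for the later global-error telescoping) are consistent with how the paper uses this lemma in the proof of Theorem \ref{UA thm2}.
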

\begin{proof}
From the Duhamel principle, it follows  that the exact solution $\widetilde{\mathbf{U}}(t )$ satisfies
\begin{equation}\label{remainders2}
\begin{aligned}
&\widetilde{\mathbf{U}}(nh+c_ih)=\exp( c_ihM) \widetilde{\mathbf{U}}(nh)+\int_0^{c_ih}\exp((\theta-c_ih)M)\widetilde{\mathbf{F}}\big(P_{\mathcal{M}} U(nh+\theta,\tau)\big)d\theta,\\
&\widetilde{\mathbf{U}}(nh+h)=\exp( hM) \widetilde{\mathbf{U}}(nh)+\int_0^{h}\exp((\theta-h)M)\widetilde{\mathbf{F}}\big(P_{\mathcal{M}} U(nh+\theta,\tau)\big)d\theta.
\end{aligned}
\end{equation}
Subtracting \eqref{remainders2} from \eqref{remainders1}, one gets
 \begin{equation*}
\begin{aligned}
\delta^{ni}=&\int_0^{c_ih}\exp\big((\theta-c_ih)M\big)G(t _n+\theta)d\theta-h\textstyle\sum\limits_{j=1}^{i-1}\bar{a}_{ij}(hM)
G(t _n+c_jh), \\
\delta^{n+1}=&\int_0^{h}\exp\big((\theta-h)M\big)G(t _n+\theta)d\theta-h\textstyle\sum\limits_{j=1}^{s}
\bar{b}_{j}(hM)G(t _n+c_jh),
\end{aligned}
\end{equation*}
where we use the notation
$ G(t ):=\widetilde{\mathbf{F}}\big(P_{\mathcal{M}} U(t,\tau)\big)$.
Applying the Taylor expansion to the nonlinear function $G$ gives
\[
\begin{aligned} \delta^{n+1}=& h      \int_{0}^1   \exp\big((1-z)h  M\big)  \sum\limits_{\rho=1}^{r}\frac{(z  h  )^{\rho-1}}{(\rho-1)!}\frac{\textmd{d}^{\rho-1}}{\textmd{d} t ^{\rho-1}} G(t _n)   {\rm d}z\\&-h   \sum\limits_{j =1}^{s}   \bar{b}_{j}(h  M) \sum\limits_{\rho=1}^{r}\frac{ c_{j}^{\rho-1}h ^{\rho-1}}{(\rho-1)!}\frac{\textmd{d}^{\rho-1}}{\textmd{d} t ^{\rho-1}}G(t _n)+ \delta_{r}^{n+1}\\
=& \sum\limits_{\rho=1}^{r}h ^\rho \psi_{\rho}(h  M)  \frac{\textmd{d}^{\rho-1}}{\textmd{d} t ^{\rho-1}}G(t _n)+\delta^{n+1}_r,\end{aligned}
\]
and similarly $$
\delta^{ni}
= \sum\limits_{\rho=1}^{r-1}h ^\rho \psi_{\rho,i}(h  M)  \frac{\textmd{d}^{\rho-1}}{\textmd{d} t ^{\rho-1}}G(t _n)+\delta^{ni}_{r-1}.
$$
Here  we consider the notation
$
\psi_{\rho,i}(z)=\varphi_{\rho}(c_iz)c_i^{\rho}-\sum_{k=1}^{i-1}\bar{a}_{ik}(z)\frac{c_{k}^{\rho-1}}{(\rho-1)!}$
for $ i=1,2,\ldots,s,$ and $\delta^{n+1}_r, \delta^{ni}_{r-1}$ denote the difference brought by the truncation of   the summation in $\rho$.
With the help of the boundedness  derived in Lemma \ref{lem2} for  the solution of the two-scale system \eqref{2scale compact}, we know that
$\frac{\textmd{d}^{\rho-1}}{\textmd{d} t ^{\rho-1}}G(t _n)=\mathcal{O}_{\mathcal{H}^{m_0}}(1)$ and  for the maximal ordering magnetic field, $\frac{\textmd{d}^{\rho-1}}{\textmd{d} t ^{\rho-1}}G(t _n)=\mathcal{O}_{\mathcal{H}^{m_0}}(\eps^{\rho})$ with $\rho=1,2,\ldots,5$.
Based on which, it is deduced that  $$\delta^{ni}_{r-1}=\mathcal{O}_{\mathcal{H}^{m_0}}\big( h ^{r} \big),\ \ \ \delta^{n+1}_r=\mathcal{O}_{\mathcal{H}^{m_0}}\big( h ^{r+1} \big)$$
and for the maximal ordering  case $$\delta^{ni}_{r-1}=\mathcal{O}_{\mathcal{H}^{m_0}}\big(\eps^{r} h ^{r} \big),\ \ \ \delta^{n+1}_r=\mathcal{O}_{\mathcal{H}^{m_0}}\big(\eps^{r+1} h ^{r+1} \big).$$
Now we verify the results of $\psi_{\rho,i}(h  M)$ and $\psi_{\rho}(h  M)$ for those four proposed methods and then immediately get the conclusions  of this lemma.
\end{proof}

\textbf{Proof of Theorem \ref{UA thm2}.}

\begin{proof}In this part, we devote to the proof for the fourth-order integrator, MO4-E, as applied to the CPD under a maximal ordering magnetic field. The process of adapting this proof for lower-order methods or for a more general strong magnetic field is straightforward and we omit these modifications for the sake of conciseness.

Considering Taylor series for the  error recursion \eqref{error equ},  we can rewrite $ \Delta \mathbf{F} ^{nj}$ in a new form $ \Delta \mathbf{F} ^{nj}=J_nE^{nj}$ with a matrix $J_n$. Inserting this into the second formula of \eqref{error equ} and concerning the local error bounds stated  in Lemma \ref{lem5}, it is deduced that
 there exist bounded operators $\mathcal{N}^{ni}(E^{n})$ such that
$E^{ni}=\mathcal{N}^{ni}(E^{n})E^{n}
+\delta^{ni}+h ^4\eps^{4}\mathcal{R}^{ni}$ with uniformly bounded remainders $\mathcal{R}^{ni}$.
Then $E^{n+1}$ can be expressed by
\begin{equation*}
\begin{aligned}
E^{n+1}=&\exp( hM)E^{n}+h  \sum\limits_{\rho=1}^{5} \bar{b}_{\rho}(h  M) J_n\mathcal{N}^{n\rho}(E^{n})E^{n}\\
&+h  \sum\limits_{\rho=1}^{5} \bar{b}_{\rho}(h  M)J_n\delta^{n\rho} +   h ^4 \psi_{4}(h  M)\frac{\textmd{d}^{3}}{\textmd{d} t ^{3}}G(t _n)+\mathcal{O}_{\mathcal{H}^{m_0}}\big(h ^{5}\eps^{5 } \big)\\
=& \exp( hM)E^{n}+h   \mathcal{N}^{n}(E^{n}) E^{n} +   h ^4 \psi_{4}(h  M)\frac{\textmd{d}^{3}}{\textmd{d} t ^{3}}G(t _n)+\mathcal{O}_{\mathcal{H}^{m_0}}\big(h ^{5}\eps^{4} \big),
\end{aligned}\end{equation*}
with the notation $\mathcal{N}^{n}(E^{n}):=\sum\limits_{\rho=1}^{5} \bar{b}_{\rho}(h  M) J_n\mathcal{N}^{n\rho}(E^{n}).$
 This recursion can be solved as
\begin{equation*}
\begin{aligned}
E^{n}=&h   \sum_{j=0}^{n-1} \exp\big((n-j-1)h  M\big) \mathcal{N}^{j}(E^{j}) E^{j}\\
&+   h ^4\sum_{j=0}^{n-1} \exp\big((n-j-1)h  M\big)\psi_{4}(h  M)  \frac{\textmd{d}^{3}}{\textmd{d} t ^{3}}G(t _n)+\mathcal{O}_{\mathcal{H}^{m_0}}\big(h  ^{4}\eps^{4}\big).
\end{aligned}\end{equation*}
For the integrator MO4-E, we observe that it does not have  $\psi_{4}(hM)=0$ but satisfies $\psi_{4}(0)=0$. This weak condition can ensure that $\psi_{4}(h  M)=h  M \tilde{\psi}_{4}(h  M)$ for a bounded operator $\tilde{\psi}_{4}(h  M)$.
In the light of  Lemma 4.8 of \cite{Ostermann06}, one arrives
\begin{equation*}\begin{array}{ll}&\sum_{j=0}^{n-1} \exp\big((n-j-1)h  M\big)\psi_{4}(h  M) \frac{\textmd{d}^{3}}{\textmd{d} t ^{3}}G(t _n)\\
=&\sum_{j=0}^{n-1}\exp\big((n-j-1)h  M\big)h  M \tilde{\psi}_{4}(h  M)  \frac{\textmd{d}^{3}}{\textmd{d} t ^{3}}G(t _n)\\ =&\mathcal{O}_{\mathcal{H}^{m_0}} \big(\frac{\textmd{d}^{3}}{\textmd{d} t ^{3}}G(t _n)\big)
=\mathcal{O}_{\mathcal{H}^{m_0}} \big(\eps^{4 }\big).
\end{array}
\end{equation*}
This leads to
$E^{n}=h   \sum_{j=0}^{n-1} \exp\big((n-j-1)h  M\big) \mathcal{N}^{j}(E^{j}) E^{j}+\mathcal{O}_{\mathcal{H}^{m_0}}\big(h  ^{4}\eps^{4 }\big),$
 which contributes  $E^{n}=\mathcal{O}_{\mathcal{H}^{m_0}}\big(h  ^{4}\eps^{4 } \big).$
This statement and the formulation process given in Definition  \ref{dIUA-PE-F} immediately complete
the proof  of Theorem \ref{UA thm2} for MO4-E.
\end{proof}

\section{Numerical test}\label{sec:4}
In this section, we apply the proposed methods to a charged-particle dynamics under two different kinds of   strong magnetic fields.

 \textbf{Problem.}
Consider  the charged-particle dynamics \eqref{charged-particle 3d} under    the scalar potential $U(x)=\frac{1}{\sqrt{x_{1}^2+x_{2}^2}}$ { that has been presented in \cite{Hairer2017-2,lubich19} and}   determines $E(x)=-\nabla U(x)$.
  The initial values are chosen as $x(0)=(1/3, 1/4, 1/2)^{\intercal},  v(0)=(2/5, 2/3, 1)^{\intercal}$ and the problem is considered over $[0,1]$. { For this problem, we consider two different strong magnetic fields:
  \begin{equation} \textmd{the general strong magnetic field:}\ \ \ B(x)/\eps= \left(
                     \cos(  x_2),
                      1+\sin( x_3),
                    \cos(x_1)
                 \right)^{\intercal}/\eps,\label{gsm}%
\end{equation}
and
  \begin{equation} \textmd{the maximal ordering case:}\ \ \   B(x)/\eps= \left(
                     \cos(\eps   x_2) ,
                      1+\sin(\eps  x_3),
                    \cos(\eps  x_1)
                 \right)^{\intercal}/\eps.\label{moc}%
\end{equation}}For the discretization in the $\tau$-direction of the  methods,
we fix $N_\tau=2^6$ so that this part of error is rather negligible.
   \begin{figure}[t!]
$$\begin{array}{cc}
\psfig{figure=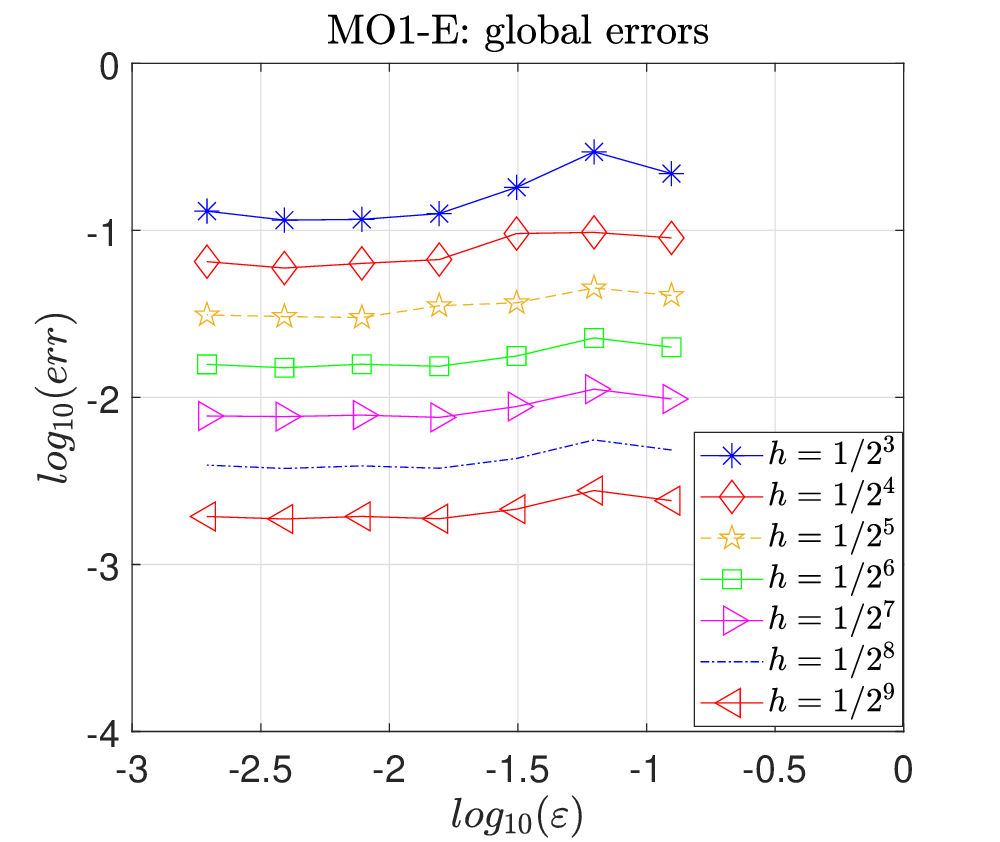,height=4.0cm,width=3.6cm}
\psfig{figure=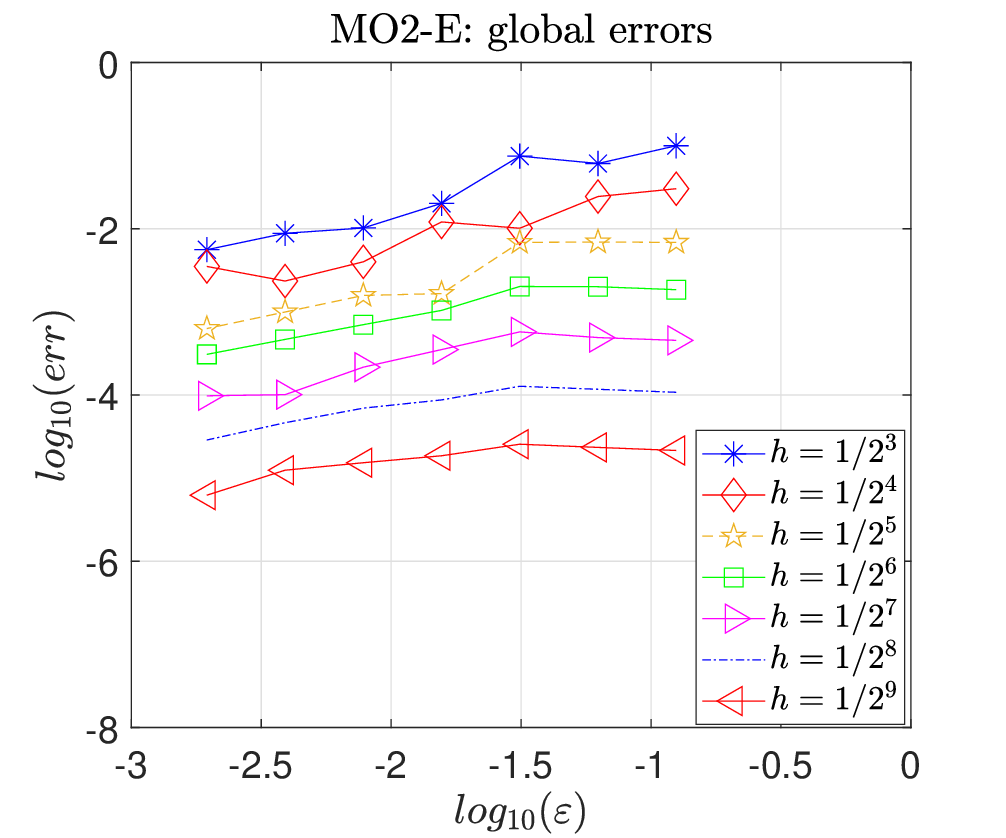,height=4.0cm,width=3.6cm}
\psfig{figure=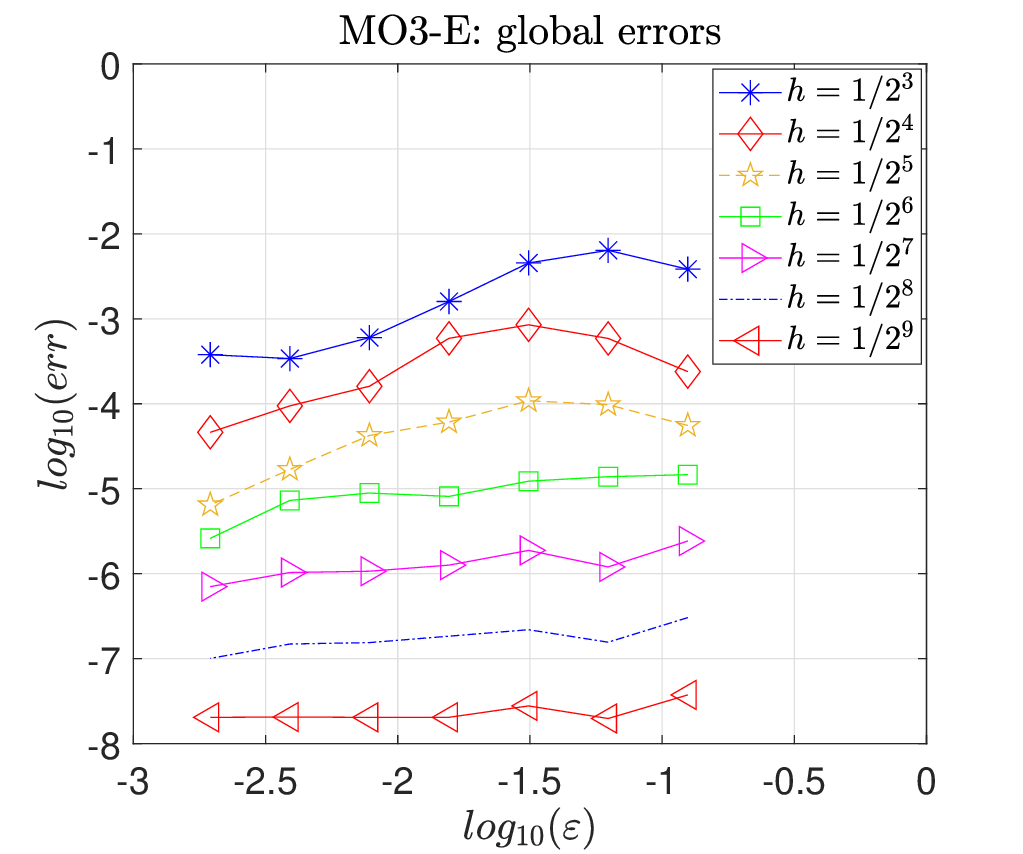,height=4.0cm,width=3.6cm}
\psfig{figure=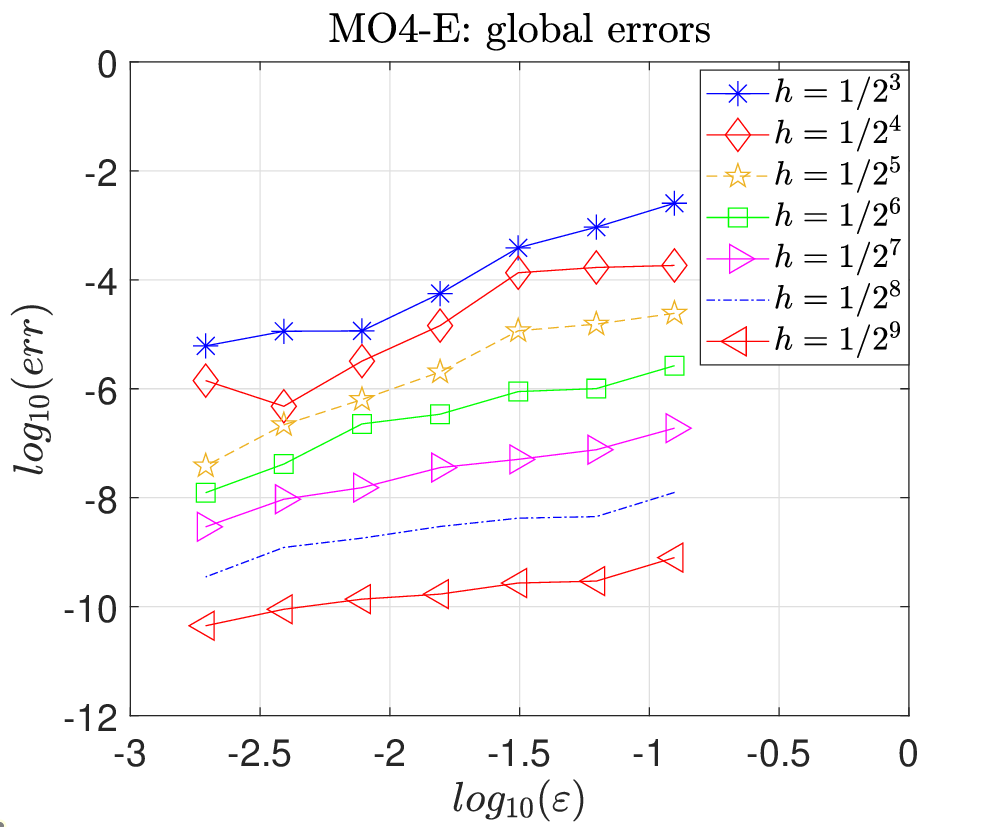,height=4.0cm,width=3.6cm}
\end{array}$$
\caption{Convergence order w.r.t. $\eps$ for the general strong magnetic field \eqref{gsm}:  the log-log plot of the temporal error   \eqref{err} { at $1$} against  $\eps$.}\label{fig1}
\end{figure}

   \begin{figure}[t!]
$$\begin{array}{cc}
\psfig{figure=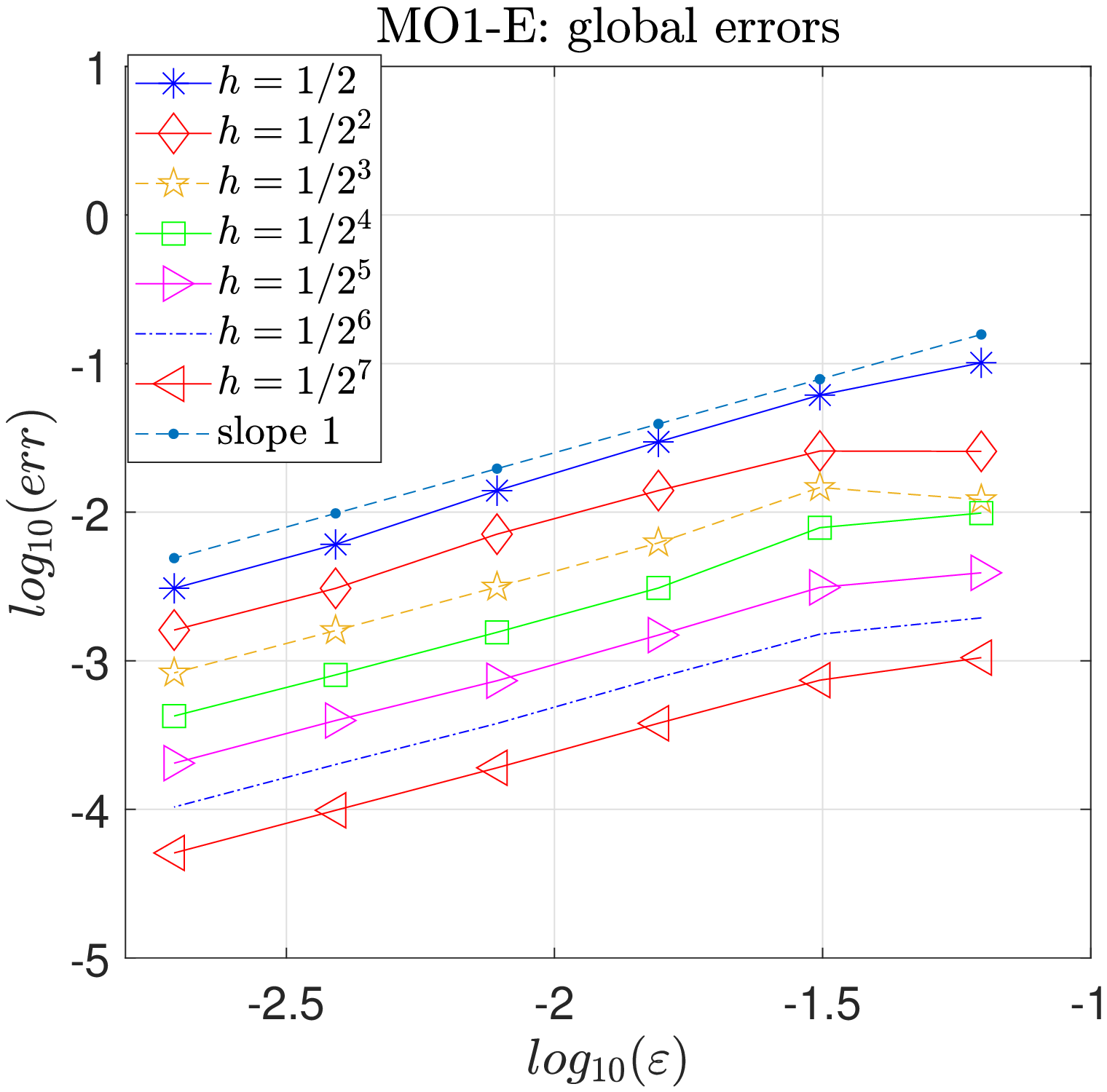,height=4.0cm,width=3.6cm}
\psfig{figure=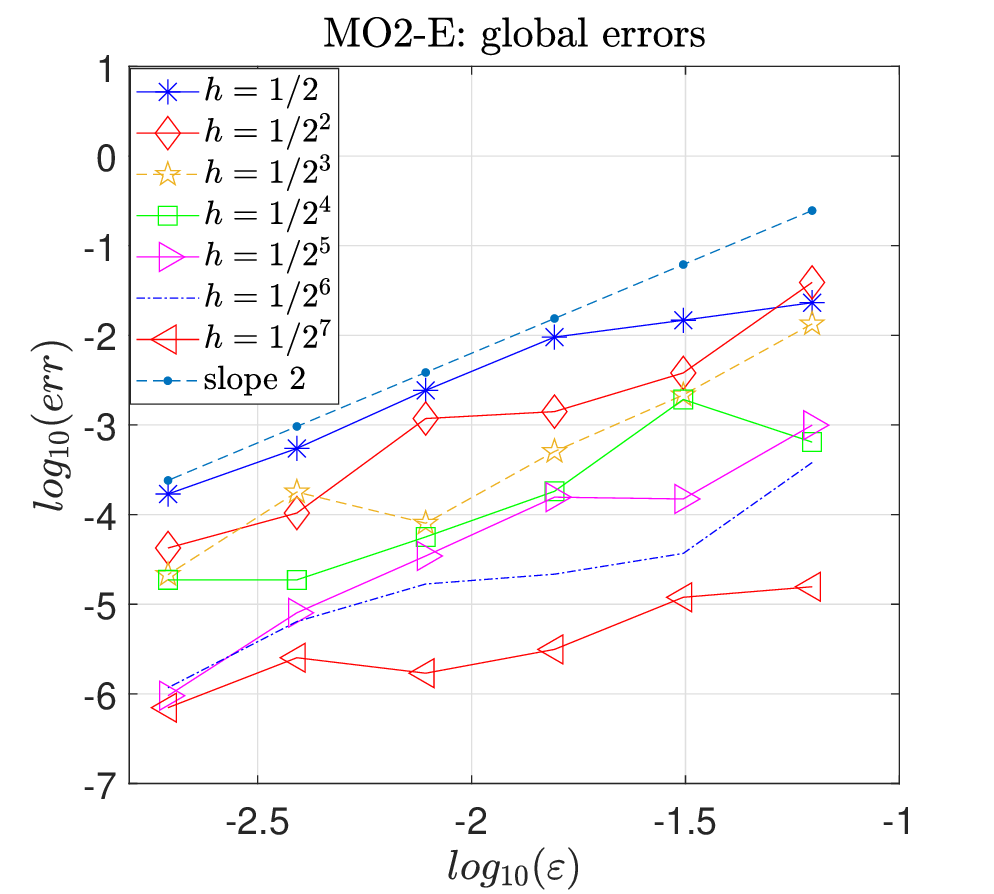,height=4.0cm,width=3.6cm}
\psfig{figure=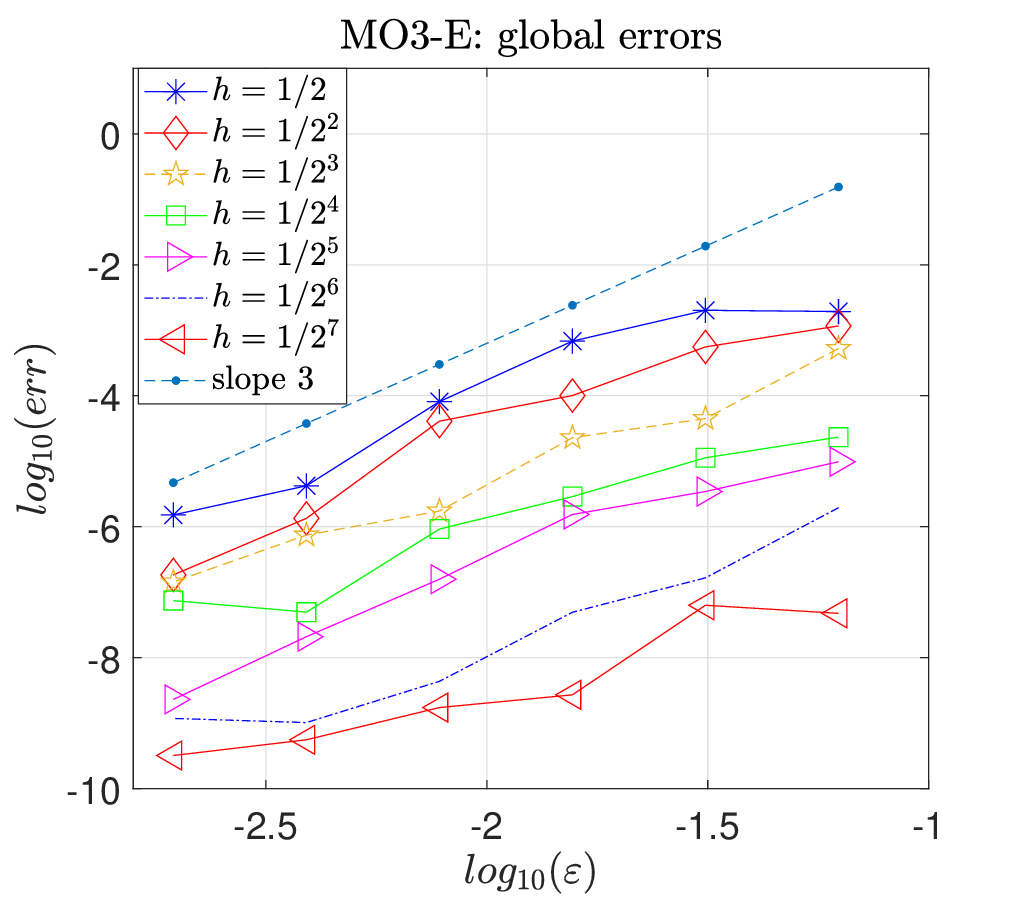,height=4.0cm,width=3.6cm}
\psfig{figure=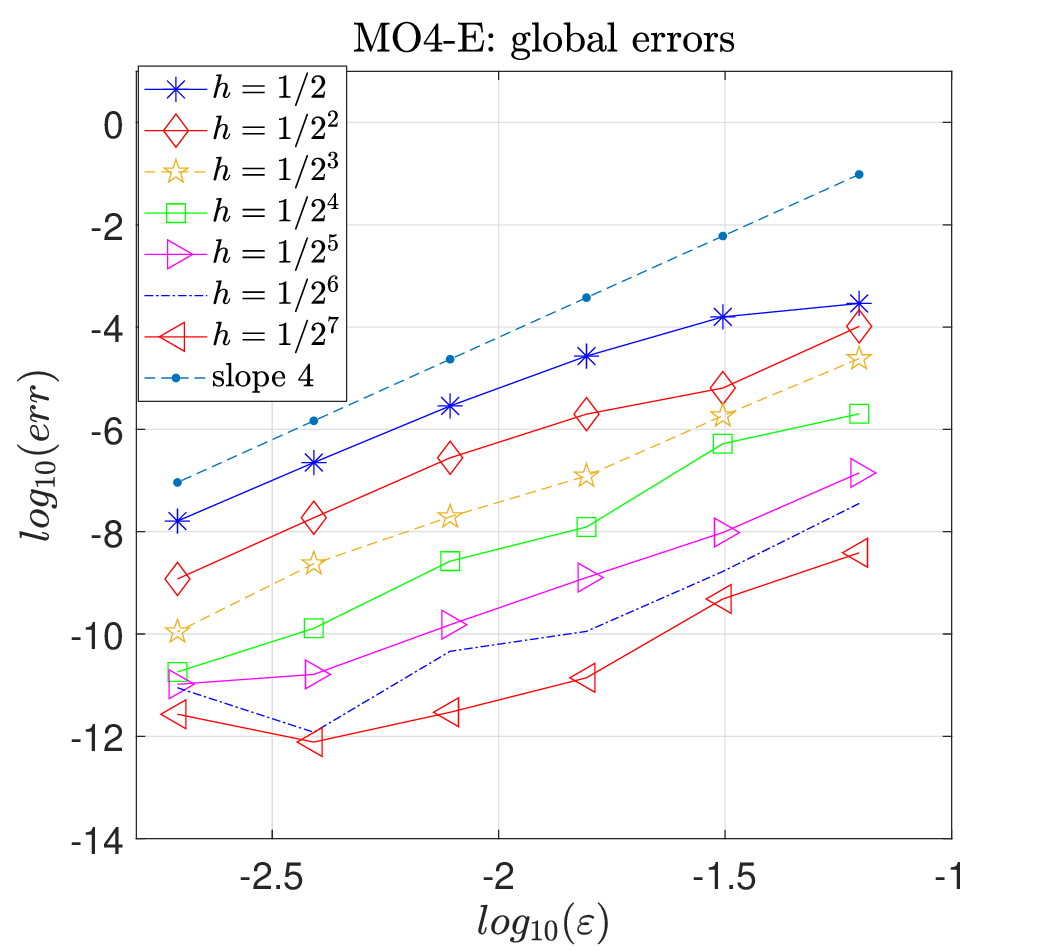,height=4.0cm,width=3.6cm}
\end{array}$$
\caption{Convergence order w.r.t. $\eps$ for the maximal ordering case \eqref{moc}:  the log-log plot of the temporal error   \eqref{err} at $1$ against  $\eps$.}\label{fig21}
\end{figure}

 {
   \textbf{Global error bounds w.r.t. $\eps$.}
   The accuracy of all the methods is shown by displaying the global errors
  \begin{equation}err= \norm{x^n-x(nh  )}/\norm{x(nh  )}+\eps \norm{v^n-v(nh  )}/\norm{v(nh )},\ \ n=1/h.\label{err}%
\end{equation}
To demonstrate the convergence order w.r.t. $\eps$, we show the temporal errors   $err$  against different values of $\eps$ in Figs. \ref{fig1} and \ref{fig21} for the general strong magnetic field \eqref{gsm} and the maximal ordering case \eqref{moc}, respectively. In the light of these results, we have the observations.

 i)  Fig.  \ref{fig1} clearly illustrates that the accuracy remains constant regardless of variations in the $\varepsilon$ value. This observation confirms that the four new integrators exhibit uniform   accuracy for general strong magnetic field \eqref{gsm}.

 ii) For the  maximal ordering magnetic field \eqref{moc}, the numerical results given in Fig. \ref{fig21} clearly show that the temporal  errors    behave like  $\mathcal{O}(\eps^r)$  for the $r$-th order integrator. The slope of the lines supports  the convergence order w.r.t. $\eps$ as stated in    \eqref{err res2} of Theorem \ref{UA thm2}.

  \textbf{Global error bounds w.r.t. $h$.}
 Then we display the temporal errors against the time stepsize $h$ in  Figs. \ref{fig2} and \ref{fig22}. It can be seen clearly that the results agree with the  theoretical time order of the four methods presented in  \eqref{err res1} of Theorem \ref{UA thm2}.

\textbf{Efficiency.}
To investigate the efficiency of our proposed integrators, we  compare the second-order scheme MO2-E with
the Boris method \cite{Boris1970} (denoted by Boris), an explicit second-order Runge-Kutta method \cite{hairer2006} (denoted by RK), the Crank-Nicolson integrator \cite{Chacon} (denoted by CN) and   an second-order asymptotic-preserving integrator  \cite{VP4} (denoted by AP). Figs. \ref{fig3}-\ref{fig32}  displays the error against the CPU time  for two different   strong magnetic fields. The results demonstrate that  MO2-E can reach the same error level with less CPU time. This advantage becomes more pronounced as the parameter $\eps$  decreases.
 Furthermore,  the efficiency of MO2-E  is enhanced when the magnetic field is maximal ordering.}

   \begin{figure}[t!]
$$\begin{array}{cc}
\psfig{figure=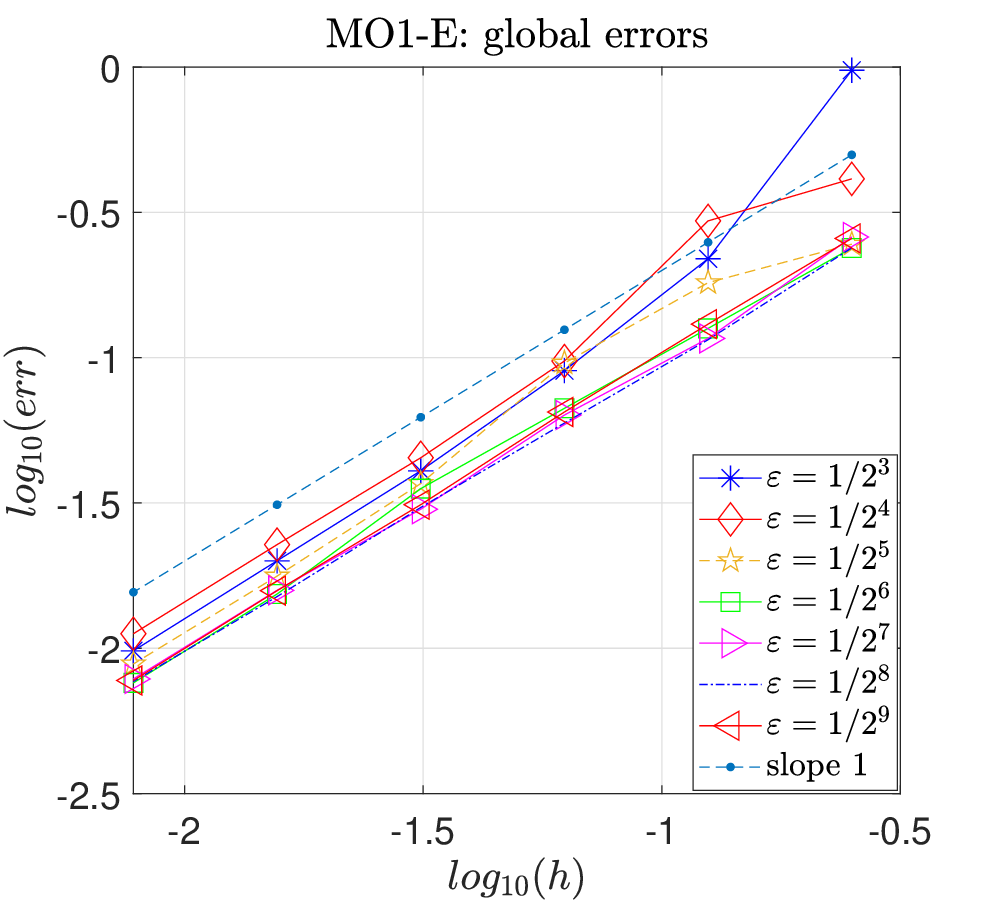,height=4.0cm,width=3.6cm}
\psfig{figure=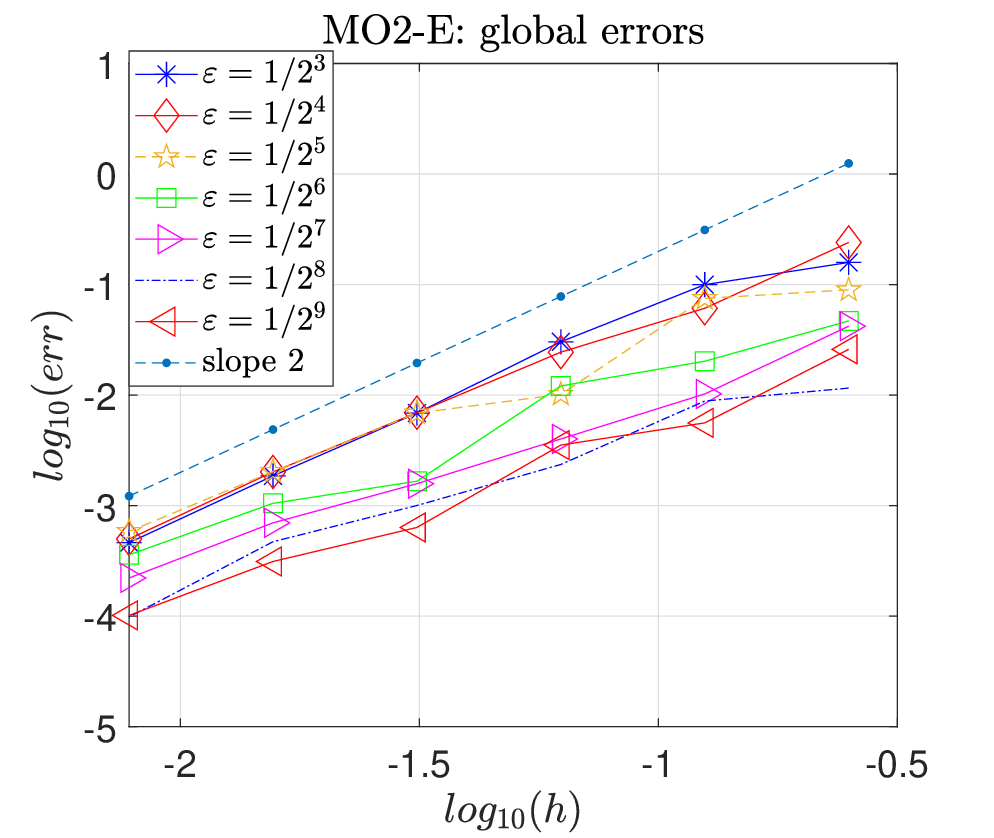,height=4.0cm,width=3.6cm}
\psfig{figure=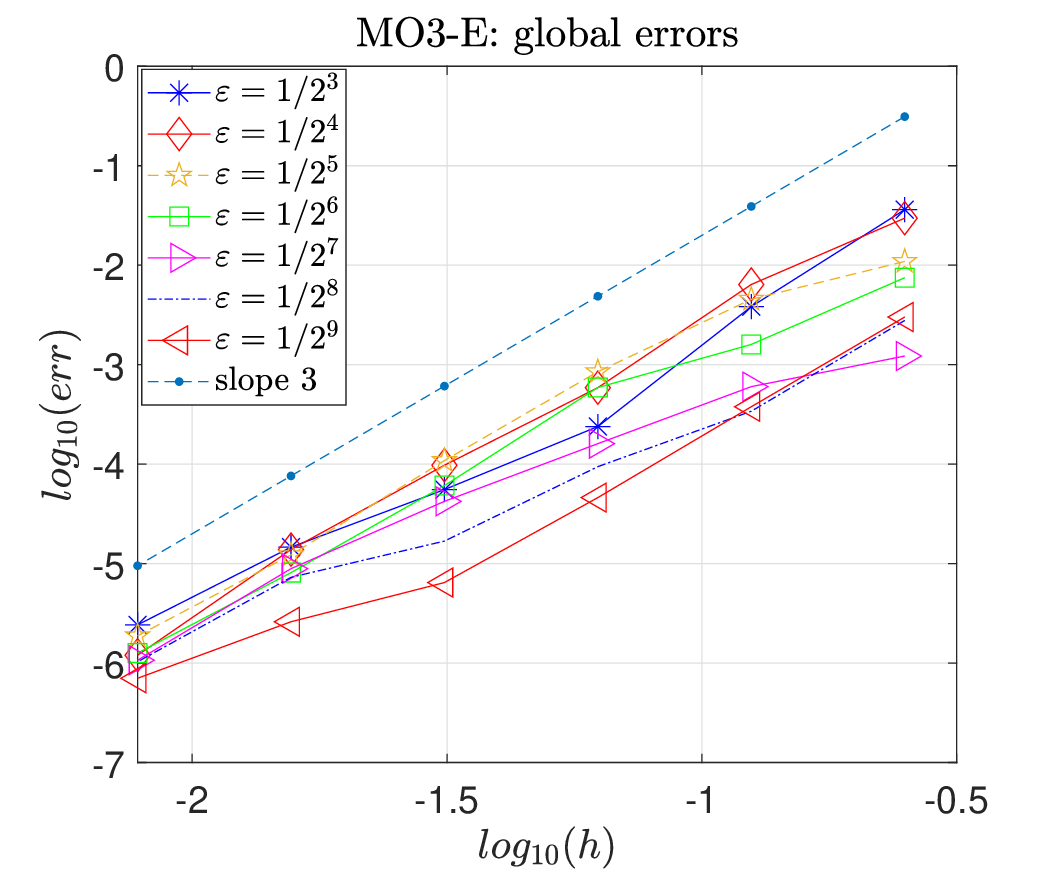,height=4.0cm,width=3.6cm}
\psfig{figure=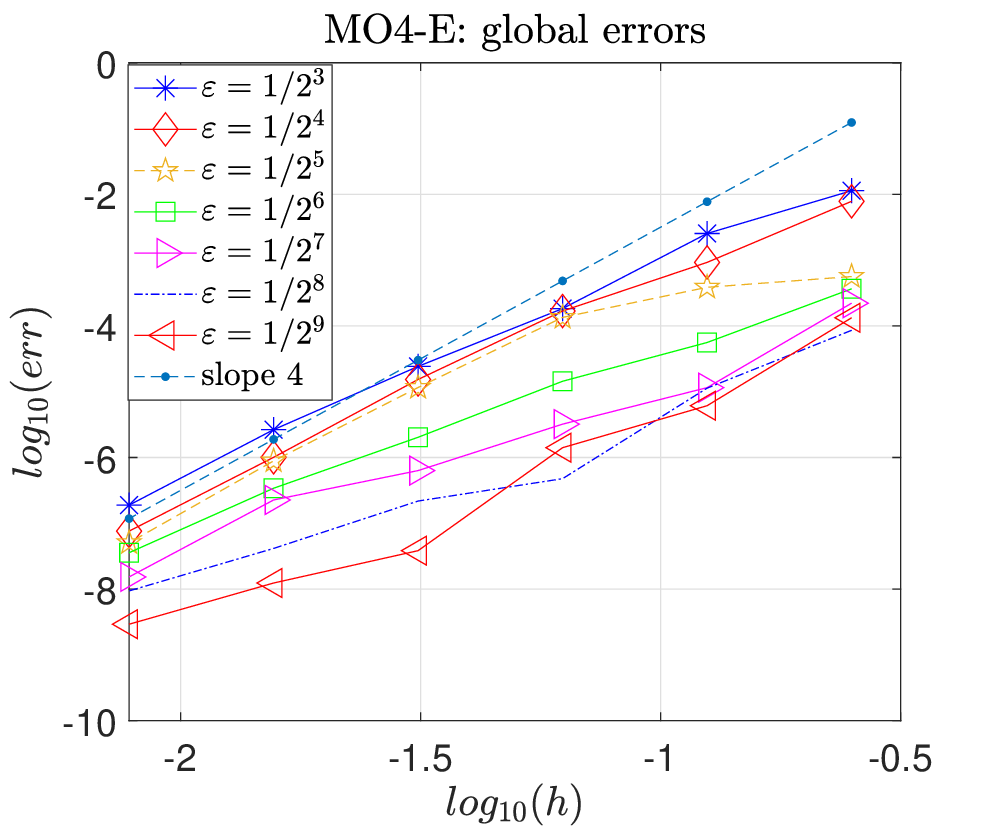,height=4.0cm,width=3.6cm}
\end{array}$$
\caption{Convergence order w.r.t. $h$ for the general strong magnetic field \eqref{gsm}: the log-log plot of the temporal error  \eqref{err} at $1$ against  $h$.}\label{fig2}
\end{figure}

   \begin{figure}[t!]
$$\begin{array}{cc}
\psfig{figure=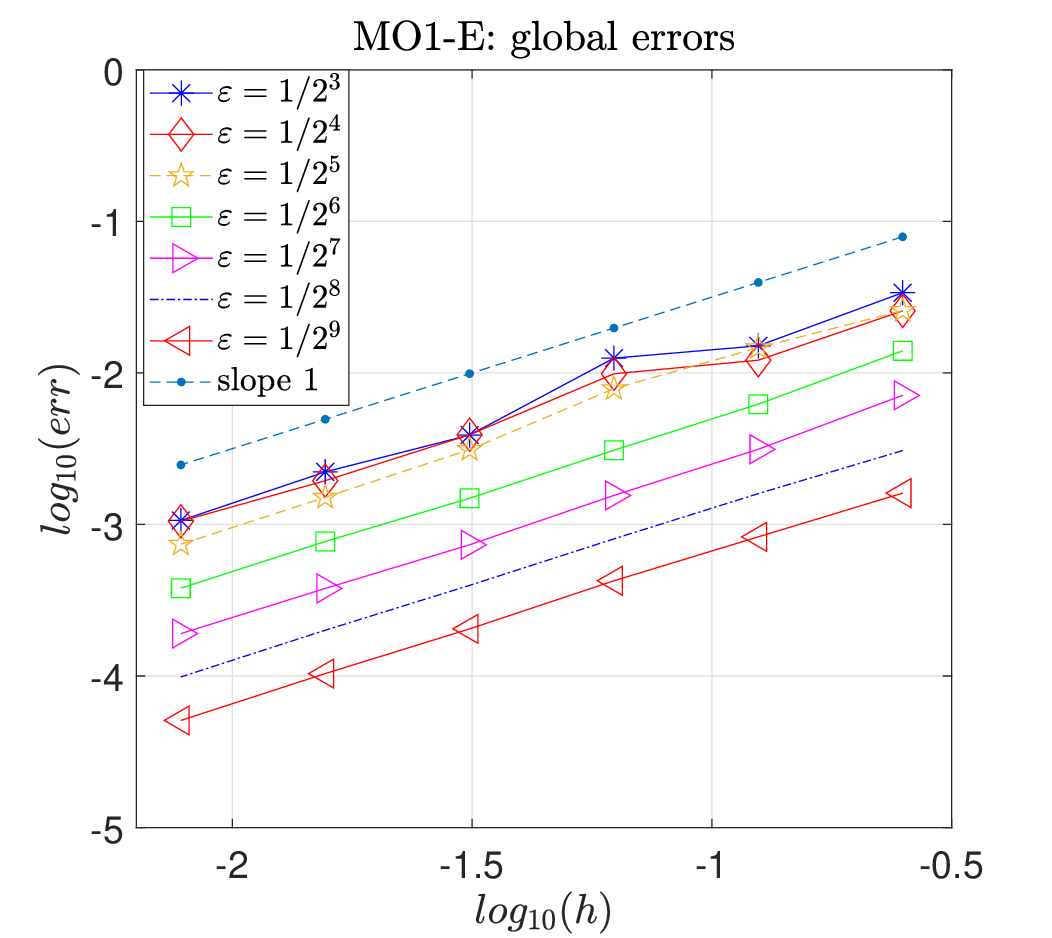,height=4.0cm,width=3.6cm}
\psfig{figure=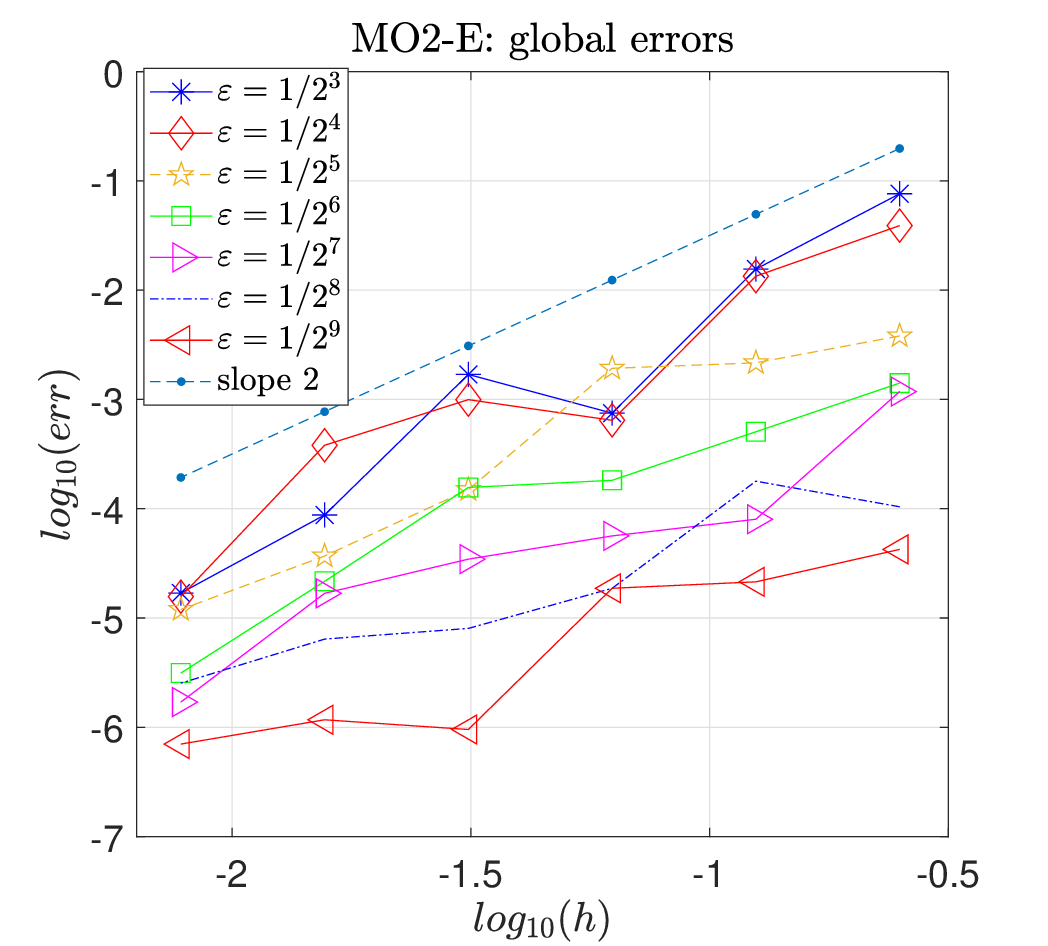,height=4.0cm,width=3.6cm}
\psfig{figure=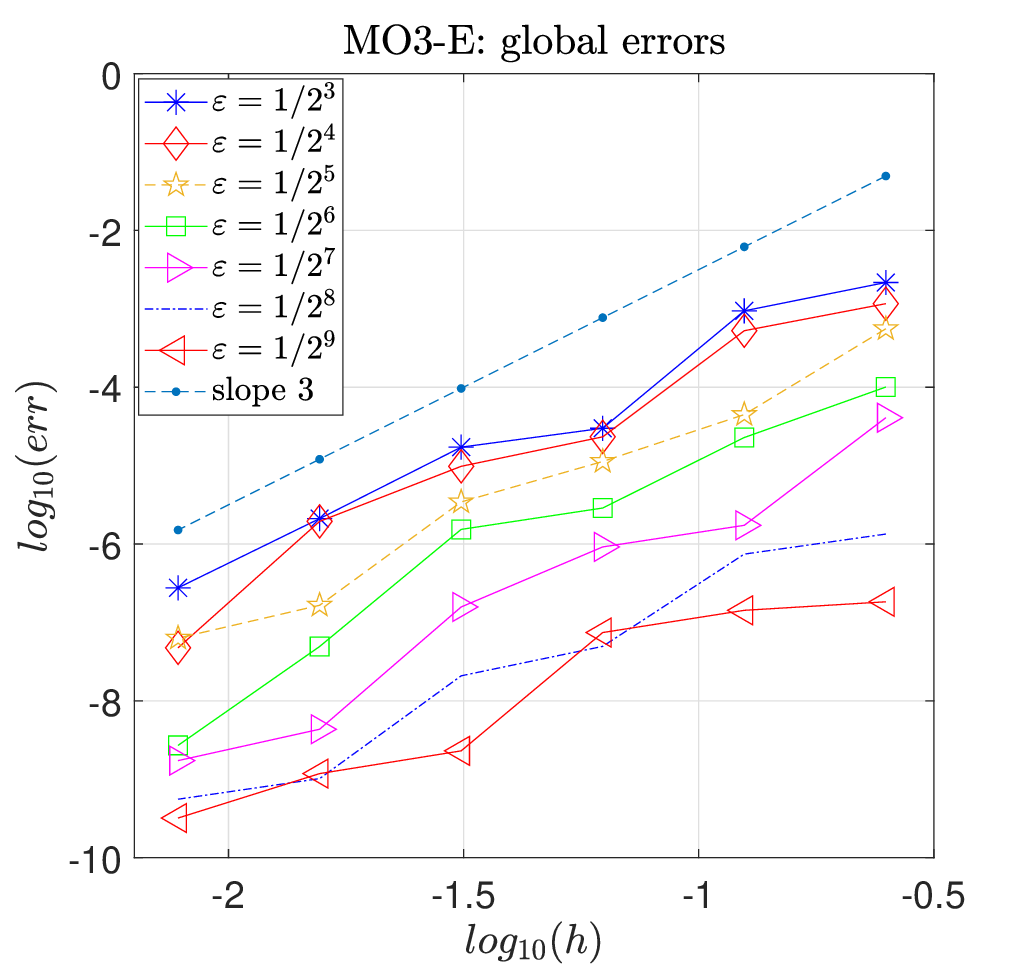,height=4.0cm,width=3.6cm}
\psfig{figure=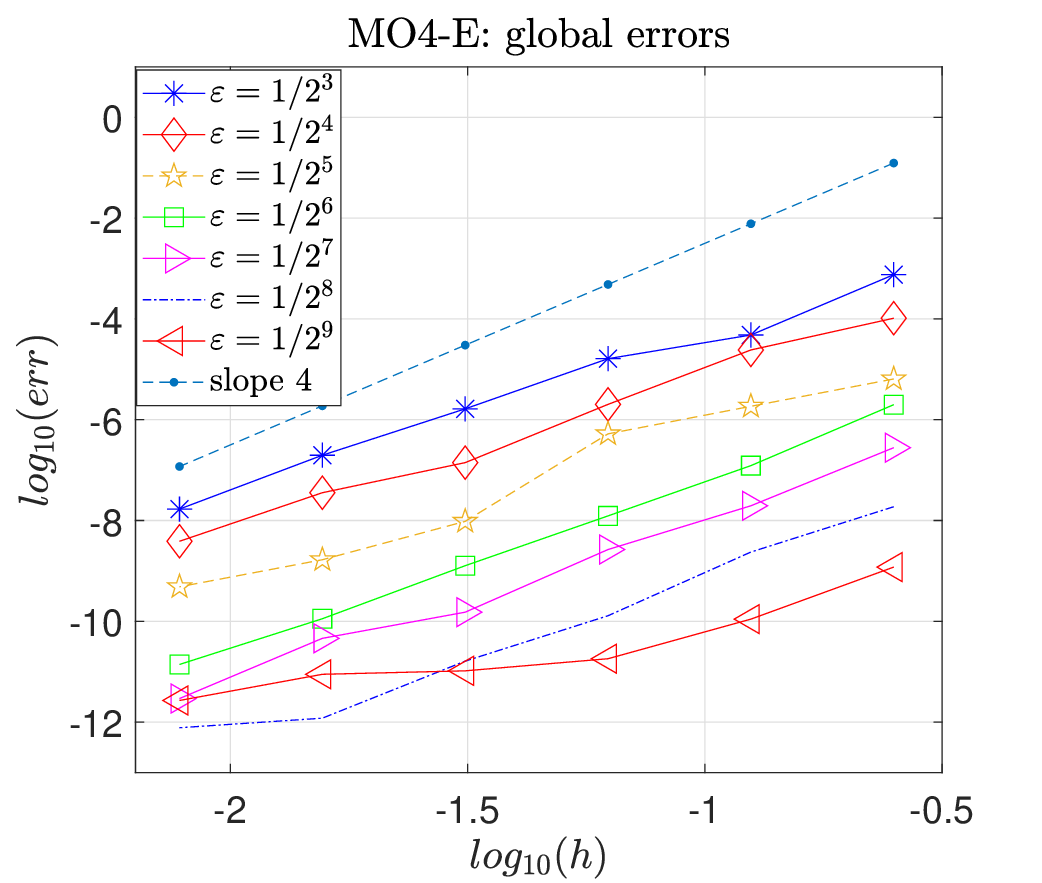,height=4.0cm,width=3.6cm}
\end{array}$$
\caption{Convergence order w.r.t. $h$ for the maximal ordering case \eqref{moc}: the log-log plot of the temporal error   \eqref{err} at $1$ against  $h$.}\label{fig22}
\end{figure}

\section{Conclusion}\label{sec:con}
In this paper, we proposed and researched  numerical integrators of  three-dimensional charged-particle dynamics (CPD) in a strong nonuniform magnetic field. We introduced a new methodology in the formulation of the intergrators and rigorously studied their error estimates. It was shown that four proposed algorithms can have uniform accuracy of orders up to four. { Improved uniform error bounds were obtained  when  these integrators are used to numerically integrate
the  CPD under a maximal ordering strong magnetic field.}  Some numerical results were displayed to demonstrate the { error and efficiency}  behaviour of the proposed methods.

   \begin{figure}[t!]
$$\begin{array}{cc}
\psfig{figure=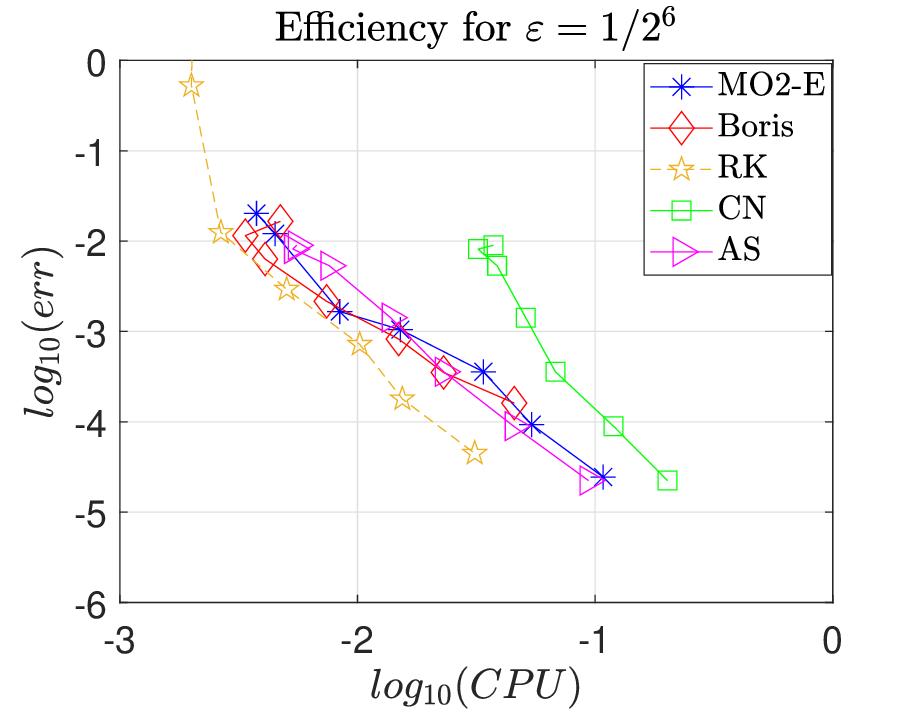,height=4.0cm,width=3.6cm}
\psfig{figure=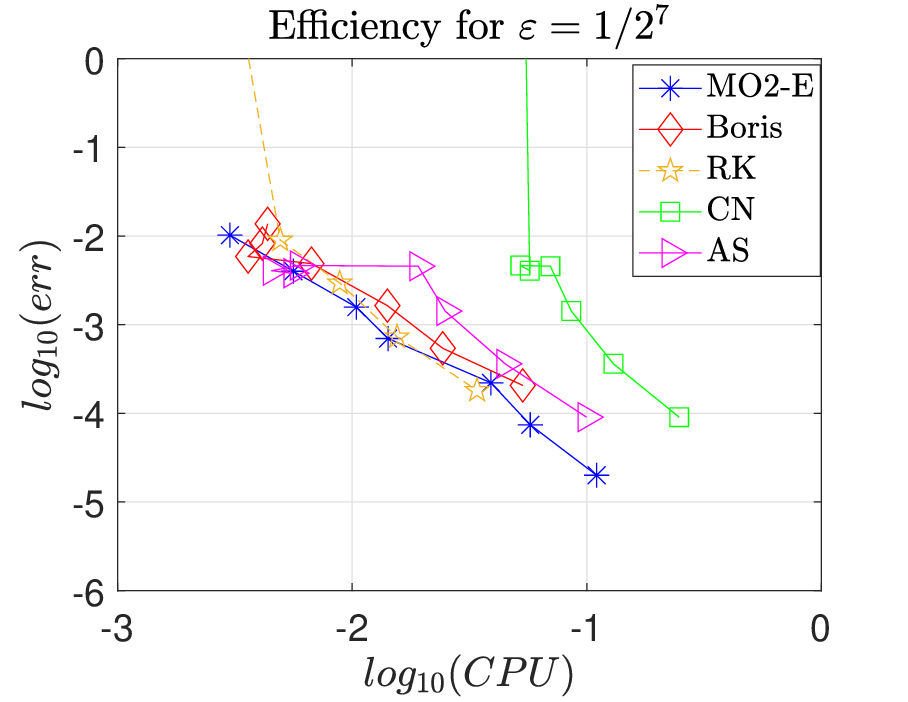,height=4.0cm,width=3.6cm}
\psfig{figure=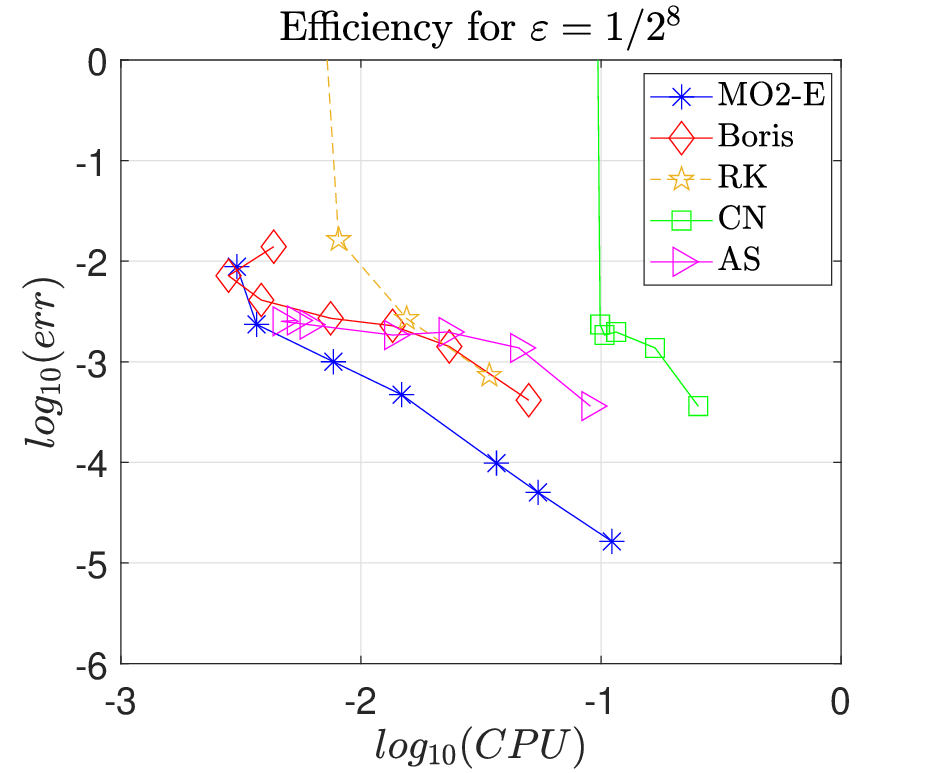,height=4.0cm,width=3.6cm}
\psfig{figure=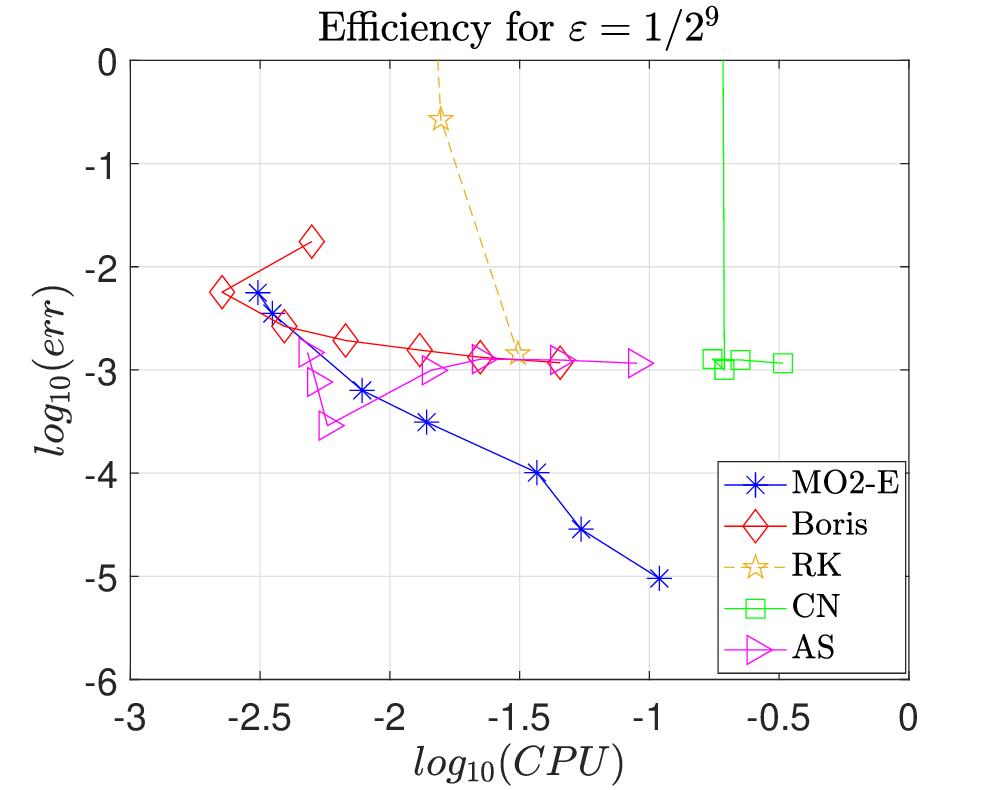,height=4.0cm,width=3.6cm}
\end{array}$$
\caption{Efficiency for the general strong magnetic field \eqref{gsm}:  the log-log plot of the temporal error  $err$    at $1$ against  CPU time.}\label{fig3}
\end{figure}

   \begin{figure}[t!]
$$\begin{array}{cc}
\psfig{figure=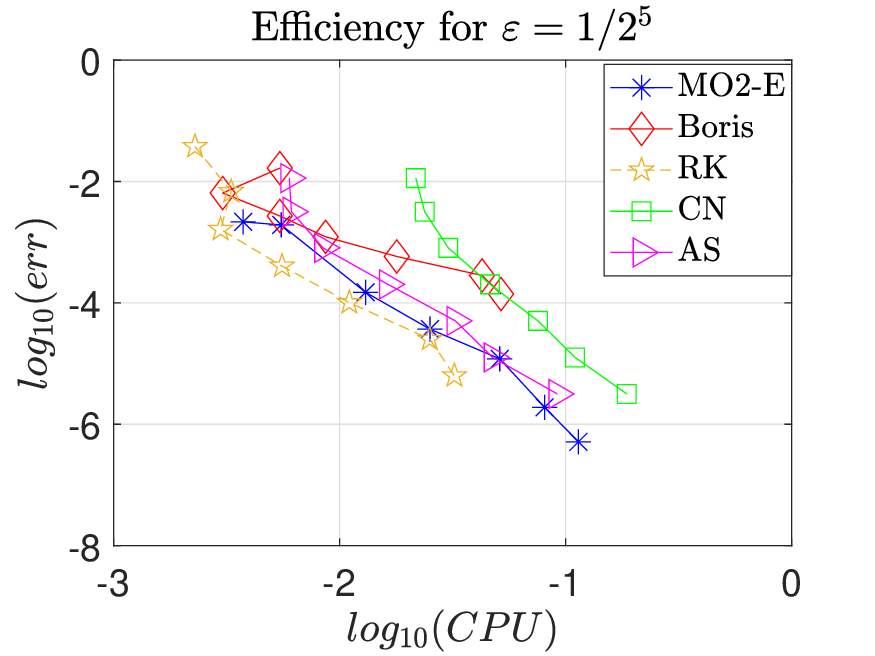,height=4.0cm,width=3.6cm}
\psfig{figure=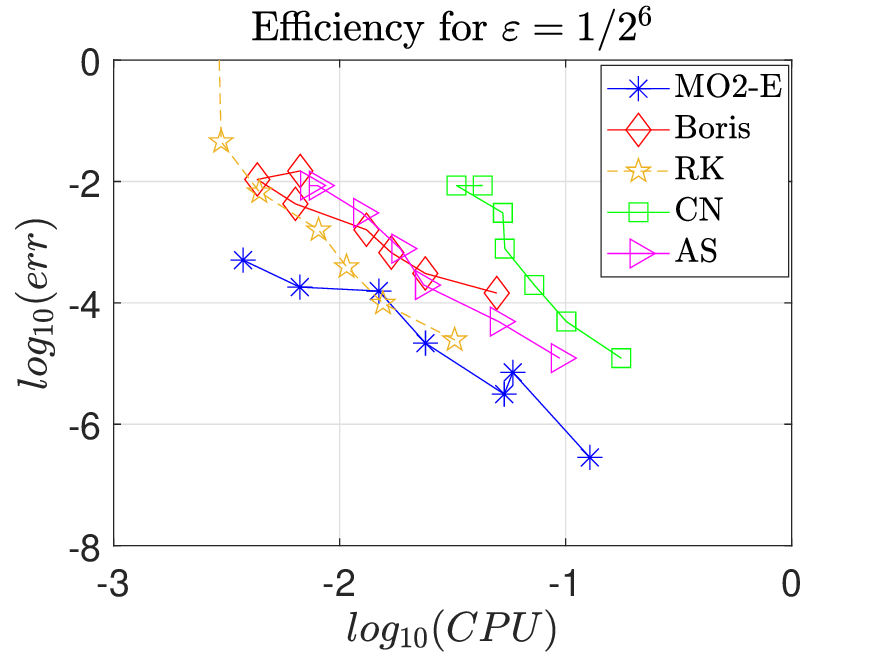,height=4.0cm,width=3.6cm}
\psfig{figure=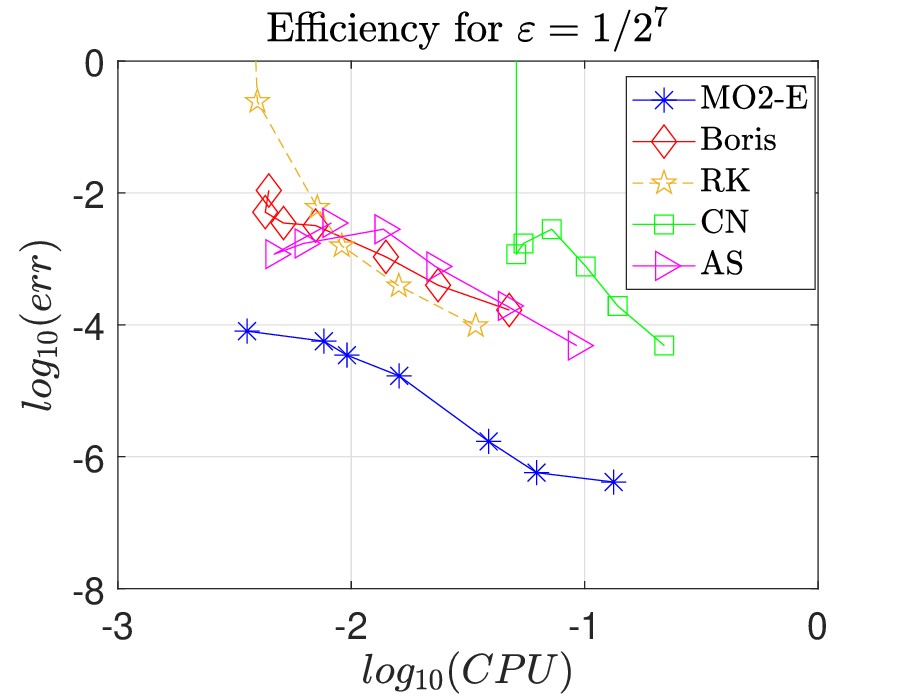,height=4.0cm,width=3.6cm}
\psfig{figure=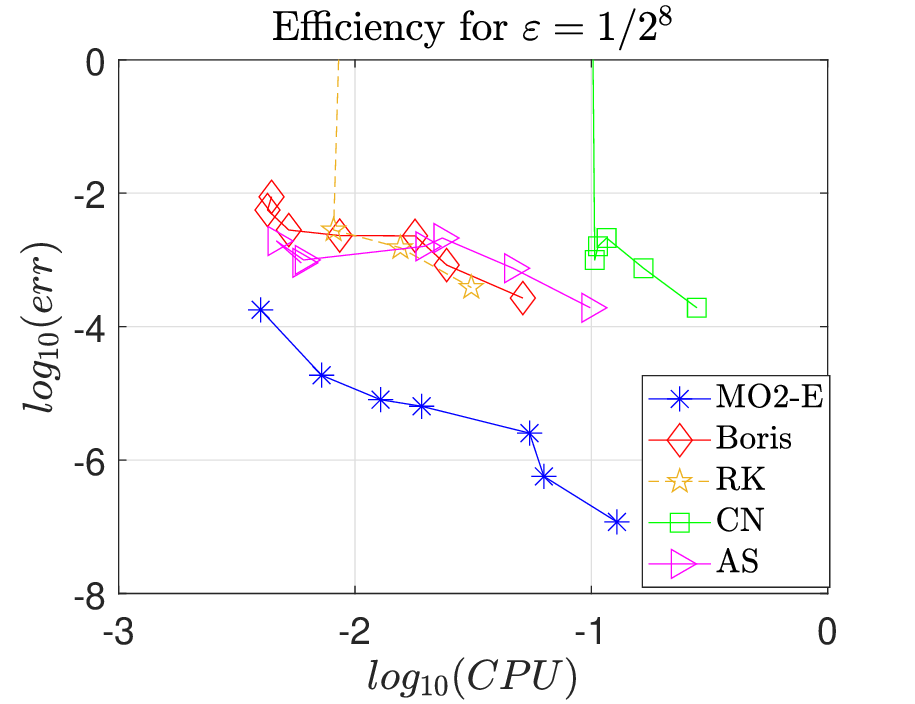,height=4.0cm,width=3.6cm}
\end{array}$$
\caption{Efficiency for the maximal ordering case \eqref{moc}::  the log-log plot of the temporal error  $err$   at $1$ against  CPU time.}\label{fig32}
\end{figure}

\section*{Acknowledgements}
The authors are grateful to   the two anonymous reviewers for their very valuable suggestions, which help improve
this paper significantly.  

\end{document}